\let\cite=\citet
\newcommand\FIGS{.}
\begin{document}
\newcommand\footnotemarkfromtitle[1]{%
\renewcommand{\thefootnote}{\fnsymbol{footnote}}%
\footnotemark[#1]%
\renewcommand{\thefootnote}{\arabic{footnote}}}

\title{Invariant domain preserving  \\
 discretization-independent schemes \\ and convex limiting for hyperbolic systems\footnotemark[1]}

\author{Jean-Luc Guermond\footnotemark[2]
\and Bojan Popov\footnotemark[2], \and Ignacio Tomas\footnotemark[2]}
\date{Draft version \today}

\maketitle

\renewcommand{\thefootnote}{\fnsymbol{footnote}} \footnotetext[1]{
  This material is based upon work supported in part by the National
  Science Foundation grants DMS-1217262, by the Air
  Force Office of Scientific Research, USAF, under grant/contract
  number FA99550-12-0358, and by the Army Research Office under grant/contract
  number W911NF-15-1-0517.  Draft
  version, \today} \footnotetext[2]{Department of Mathematics, Texas
  A\&M University 3368 TAMU, College Station, TX 77843, USA.}
\renewcommand{\thefootnote}{\arabic{footnote}}

\begin{abstract} 
  We introduce an approximation technique for nonlinear hyperbolic
  systems with sources that is invariant domain preserving. The method
  is discretization-independent provided elementary symmetry and
  skew-symmetry properties are satisfied by the scheme. The method is
  formally first-order accurate in space. A series of higher-order
  methods is also introduced. When these methods violate the invariant
  domain properties, they are corrected by a limiting technique that
  we call \emph{convex limiting}. After limiting, the resulting
  methods satisfy all the invariant domain properties that are imposed
  by the user (see Theorem~\ref{Thm:limiting_and_invariant_domain}). A
  key novelty is that the bounds that are enforced on the solution at
  each time step are necessarily satisfied by the low-order
  approximation.
\end{abstract}

\begin{keywords}
  Hyperbolic systems, second-order accuracy, convex invariant sets, limiting, maximum principle, 
  graph viscosity, finite volumes, finite elements.
\end{keywords}

\begin{AMS}
65M60, 65M10, 65M15, 35L65
\end{AMS}

\pagestyle{myheadings} \thispagestyle{plain} \markboth{J.-L. GUERMOND,
  B. POPOV, I. TOMAS}{Invariant domain preserving
approximation of hyperbolic systems}

\section{Introduction} \label{Sec:introduction} The present paper is
concerned with the approximation of hyperbolic systems in conservation
form with a source term:
\begin{equation}
 \label{def:hyperbolic_system} 
 \begin{cases} \partial_t \bu + \DIV \polf(\bu)=\bS(\bu), 
\quad \mbox{for}\, (\bx,t)\in \Dom\CROSS\Real_+,\\
\bu(\bx,0) = \bu_0(\bx), \quad \mbox{for}\, \bx\in \Real^d.
\end{cases}
\end{equation}
The space dimension $d$ is arbitrary. The dependent variable
$\bu$ takes values in $\Real^m$ and the flux $\polf$ takes values in
$(\Real^m)^d$.  In this paper $\bu$ is considered as a column vector
$\bu=(u_1,\ldots,u_m)\tr$. The flux is a matrix with entries
$\polf_{ij}(\bu)$, $1\le i\le m$, $1\le j\le d$ and $\DIV\polf$ is a
column vector with entries
$(\DIV\polf)_i= \sum_{1\le j\le d}\partial_{x_j} \polf_{ij}$. For any
$\bn=(n_1\ldots,n_d)\tr\in \Real^d$, we denote $\polf(\bu)\bn$ the
column vector with entries $\sum_{1\le l\le d} \polf_{il}(\bu) n_l$,
where $i\in\intset{1}{m}$.  To simplify questions regarding boundary
conditions, we assume that either periodic boundary conditions are
enforced, or the initial data is compactly supported or constant
outside a compact set.  In both cases we denote by $\Dom \subseteq \Real^d$
the spatial domain where the approximation is constructed. The domain $\Dom$ is
the $d$-torus in the case of periodic boundary conditions. In the case
of the Cauchy problem, $\Dom$ is a compact, polygonal portion of
$\Real^d$ large enough so that the domain of influence of $\bu_0$ is
always included in $\Dom$ over the entire duration of the simulation.

The objective of the paper is to generalize the techniques that was
introduced in \cite{GuerNazPopTom2017} for the approximation of the
compressible Euler equations using continuous finite elements. We want
to present an approximation technique that is almost discretization
independent and works with any hyperbolic system with source term,
under some mild assumptions on the source.  The formalism encompasses
finite volumes, continuous finite elements and discontinuous finite
elements.  The method is formally second-order or higher-order in
space and can be made (at least) fourth-order accurate in time by
using explicit Runge Kutta SSP methods.  The key ingredients of the
method are as follows: (i) A low-order invariant domain preserving
approximation technique using a graph viscosity. (The viscosity is
based on the connectivity graph of the degrees of freedom of the
method. One viscosity coefficient is computed on every edge of the
graph.) (ii) A high-order approximation technique. (The method may not
be fully entropy consistent and may step out of the local invariant
domain); (iii) A \emph{convex limiting} technique with guaranteed
bounds. (The bounds in question are obtained by computing auxiliary
states on every edge of the connectivity graph. The convex limiting
method works for any quasiconcave functional, \ie it is possible to
limit any quasiconcave functional of the approximate solution.)

The paper is organized as follows.  We recall elementary properties of
the hyperbolic system~\eqref{def:hyperbolic_system} in
\S\ref{Sec:Preliminaries}. The theory for the low-order method is
explained in \S\ref{sec:firstorder}. The main result of this section
is Theorem~\ref{Thm:UL_is_invariant}. The auxiliary states, which play
a key role in the convex limiting technique are defined
in~\eqref{def_barstates}. The method is illustrated in the context of
finite volumes, continuous finite elements, and discontinuous finite
elements in \S\ref{Sec:examples_discretization}. A brief overview of
explicit Runge Kutta Strong Stability methods is made in
\S\ref{Sec:SSP_overview}. The key result of this section is a
reformulation of the Shu-Osher Theorem~\ref{Th:Shu_Osher} which does
not involve any norm. We show therein that only convexity matters.  It
seems that the result, as reformulated, is not well known in the
literature. We show in \S\ref{Sec:High_order_methods} how higher-order
schemes can be constructed. These methods are not necessarily
invariant domain preserving. In passing we revisit an idea initially
proposed by \cite[Eq. (12)]{Jameson_aiaa_1981} which consists of
constructing a second-order graph viscosity by using a smoothness
indicator.  In Theorem~\ref{Thm:max_principle_smoothness} we prove
that a high-order scheme based on the smoothness indicator of a
conserved scalar component of the system does indeed preserve the
bounds (for that component) naturally satisfied by the first-order
method. In Theorem~\ref{Thm:Greedy_viscosity} we present another
invariant domain preserving result for one scalar component of the
conserved variables, but in this case the graph viscosity is computed
by using a gap estimate (see Lemma~\ref{Lem:lower_upper_bounds}) 
instead of a smoothness indicator. To the best of our knowledge, it seems
that both results are original in the context of hyperbolic
systems. The convex limiting technique is presented in
\S\ref{Sec:convex_limiting}, the key results of this section are
Lemma~\ref{Lem:NaturalBounds}, Lemma~\ref{Lem:quasiconcave} and
Theorem~\ref{Thm:compute_lij}.  All these results are
recapitulated into Theorem~\ref{Thm:limiting_and_invariant_domain},
which in some sense summarizes the content of the present paper.
The idea of using the auxiliary states~\eqref{def_barstates} and
convex limiting has originally been proposed in
\cite{GuerNazPopTom2017} for the Euler equations. The proposed
generalization to general hyperbolic systems with source term for generic
discretizations seems to be new.

Computations illustrating the performance of the abstract
results stated in the paper can be found in
\cite{GuePo2016,GuerNazPopTom2017} for the compressible Euler
equations, and in
\cite{Azerad_Guermond_Popov_2017,Guermond_Quezada_Popov_Kees_Farthing_2018}
for the shallow water equations.

\section{Preliminaries} \label{Sec:Preliminaries}
We recall in this section key properties about the
system~\eqref{def:hyperbolic_system} that will be used repeatedly in the
paper. The reader who is familiar with hyperbolic systems with source
terms, Riemann problems, and invariant sets is invited to jump to
\S\ref{sec:firstorder}.

\subsection{Riemann problem space average and maximum wave speed}
\label{Sec:Riemann_pb_average_max_wavespeed}
We consider \eqref{def:hyperbolic_system} without source term in this
subsection, \ie $\bS(\bu) = 0$. Instead of trying to give a precise meaning to
the solutions of~\eqref{def:hyperbolic_system}, which is either a very technical 
task or a completely open problem, we instead assume that there
is a clear notion of solution for the Riemann problem. 
That is to say
we assume that there exists an nonempty admissible set
$\calA \subset\Real^{m} $ such that for any pair of states
$(\bu_L,\bu_R)\in \calA\CROSS \calA$ and any unit vector $\bn$ in
$\Real^d$, the following one-dimensional Riemann problem
\begin{equation}
 \label{def:Riemann_problem} 
  \partial_t \bv + \partial_x (\polf(\bv)\bn)=0, 
\quad  (x,t)\in \Real\CROSS\Real_+,\qquad 
\bv(x,0) = \begin{cases} \bv_L, & \text{if $x<0$} \\ \bv_R,  & \text{if $x>0$}, \end{cases}
\end{equation}
has a unique (entropy satisfying) self-similar solution denoted by
$\bv(\bn,\bv_L,\bv_R, \xi)$, where $\xi = \frac{x}{t}$ is the
self-similarity parameter, see for instance \cite{Lax_1957_II,Toro_2009}. The key
result that we are going to use in this paper is that there exists a
maximum wave speed henceforth denoted
$\lambda_{\max}(\bn,\bv_L,\bv_R)$ such that
$\bv(\bn,\bv_L,\bv_R, \xi) = \bv_L$ if
$\xi\le -\lambda_{\max}(\bn,\bv_L,\bv_R)$ and
$\bv(\bn,\bv_L,\bv_R, \xi) = \bv_R$ if
$\xi\ge \lambda_{\max}(\bn,\bv_L,\bv_R)$. We assume that
$\lambda_{\max}(\bn,\bv_L,\bv_R)$ can be estimated from above
efficiently; for instance, we refer the reader to
\cite{Guermond_Popov_Fast_Riemann_2016} where guaranteed upper bounds
on the maximum wave speed are given for the Euler equations with the
co-volume equation of state.  The following elementary result, which
we are going to invoke repeatedly, is an important
consequence of the finite speed of propagation assumption:

\begin{lemma}[Average over the Riemann
  fan] \label{Lem:elemetary_Riemann_pb} Let $(\eta,\bq)$ be an entropy
  pair for the system~\eqref{def:hyperbolic_system}.  Let
  $ \overline\bv(t,\bn,\bv_L,\bv_R) :=\int_{-\frac12}^{\frac12}
  \bv(\bn,\bv_L,\bv_R,\xi) \diff x $
  be the average of the Riemann solution over the Riemann fan at time
  $t$. Assume that $t\lambda_{\max}(\bn,\bv_L,\bv_R) \le \frac12$,
  then the following holds true:
\begin{align}
  \overline\bv(t,\bn,\bv_L,\bv_R)  &= \frac{1}{2}(\bv_L+\bv_R) 
  - t\big(\polf(\bv_R)\bn - \polf(\bv_L)\bn\big).
\label{elemetary_Riemann_pb}\\
\label{entropy_elemetary_Riemann_pb}
\eta(\overline\bv(t,\bn,\bv_L,\bv_R) )
&\le \tfrac{1}{2}(\eta(\bv_L)+\eta(\bv_R)) 
  - t(\bq(\bv_R)\SCAL\bn - \bq(\bv_L)\SCAL\bn).
\end{align}
\end{lemma}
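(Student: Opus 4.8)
The plan is to integrate the conservation law and the entropy inequality over a suitable space–time rectangle and exploit the finite-speed-of-propagation property, so that the boundary terms only see the constant states $\bv_L$ and $\bv_R$. The crucial observation is that, under the hypothesis $t\lambda_{\max}(\bn,\bv_L,\bv_R)\le\tfrac12$, the self-similar solution $\bv(\bn,\bv_L,\bv_R,\tfrac{x}{t})$ equals $\bv_L$ for $x\le -\tfrac12$ and equals $\bv_R$ for $x\ge\tfrac12$; in particular the whole Riemann fan is contained in the strip $|x|<\tfrac12$ at time $t$.

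First I would establish~\eqref{elemetary_Riemann_pb}. Integrate $\partial_t\bv+\partial_x(\polf(\bv)\bn)=0$ over the rectangle $(x,s)\in(-\tfrac12,\tfrac12)\CROSS(0,t)$. The time-derivative term gives $\int_{-1/2}^{1/2}\bv(\bn,\bv_L,\bv_R,\tfrac{x}{t})\diff x-\int_{-1/2}^{1/2}\bv(x,0)\diff x$; the first integral is exactly $\overline\bv(t,\bn,\bv_L,\bv_R)$ by definition, and the second is $\tfrac12(\bv_L+\bv_R)$ since $\bv(x,0)=\bv_L$ for $x<0$ and $\bv_R$ for $x>0$. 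The flux term gives $\int_0^t\big(\polf(\bv(\tfrac12/s))\bn-\polf(\bv(-\tfrac12/s))\bn\big)\diff s$; here one uses that for $s\le t$ one has $\tfrac{1/2}{s}\ge\tfrac{1/2}{t}\ge\lambda_{\max}$, so the trace at $x=\tfrac12$ is $\polf(\bv_R)\bn$, and likewise the trace at $x=-\tfrac12$ is $\polf(\bv_L)\bn$. Hence the flux contribution is $t\big(\polf(\bv_R)\bn-\polf(\bv_L)\bn\big)$, and rearranging yields~\eqref{elemetary_Riemann_pb}. (One should note that $\bv$ is only of bounded variation in general, so the integration of the conservation law should be read in the weak/distributional sense; for self-similar BV solutions the trace values along the rays $x=\pm\tfrac12$ are well defined and this manipulation is standard.)

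Next I would treat~\eqref{entropy_elemetary_Riemann_pb}. Since $\bv$ is the entropy solution of the Riemann problem, it satisfies the entropy inequality $\partial_t\eta(\bv)+\partial_x(\bq(\bv)\SCAL\bn)\le0$ in the sense of distributions. Integrating this inequality over the same rectangle $(-\tfrac12,\tfrac12)\CROSS(0,t)$ gives
\begin{equation*}
\int_{-\frac12}^{\frac12}\eta\big(\bv(\bn,\bv_L,\bv_R,\tfrac{x}{t})\big)\diff x
-\tfrac12\big(\eta(\bv_L)+\eta(\bv_R)\big)
+t\big(\bq(\bv_R)\SCAL\bn-\bq(\bv_L)\SCAL\bn\big)\le 0,
\end{equation*}
using the same boundary/initial evaluations as before. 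It remains to bound $\eta(\overline\bv)$ by the average of $\eta(\bv)$ over the fan: since $\eta$ is convex and $\overline\bv=\int_{-1/2}^{1/2}\bv\diff x$ is a convex average (the interval has unit length), Jensen's inequality gives $\eta(\overline\bv)\le\int_{-1/2}^{1/2}\eta(\bv)\diff x$. Combining the last two displays yields~\eqref{entropy_elemetary_Riemann_pb}.

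The main obstacle, such as it is, is purely one of regularity bookkeeping: justifying the divergence-theorem manipulation for the merely-BV self-similar solution $\bv$ and making sense of the traces along $x=\pm\tfrac12$. This is handled by working with the Rankine–Hugoniot / weak formulation of the Riemann solution and invoking the finite-speed-of-propagation bound, which guarantees the traces along those rays are the constant states $\bv_L$, $\bv_R$; everything else is elementary (Jensen's inequality and arithmetic). One could alternatively phrase the whole argument in the self-similarity variable $\xi=x/t$ from the start, which makes the constancy of $\bv$ outside $[-\lambda_{\max},\lambda_{\max}]$ manifest and slightly streamlines the flux computation.
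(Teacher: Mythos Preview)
Your argument is correct and is exactly the standard route: integrate the conservation law (respectively the entropy inequality) over $(-\tfrac12,\tfrac12)\times(0,t)$, use the CFL condition $t\lambda_{\max}\le\tfrac12$ to evaluate the lateral traces as $\bv_L,\bv_R$, and then apply Jensen's inequality for the entropy part. The paper does not actually supply a proof of this lemma; it is stated as an ``elementary result'' and left to the reader, so there is nothing further to compare.
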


\subsection{Invariant sets and invariant domains}\label{sec:invarSets}
We introduce in this section the notions of invariant sets and
invariant domains. Our definitions are slightly different from those
in
\cite{Chueh_Conley_Smoller_1977,Hoff_1985,Smoller1994,Frid_2001}. We
associate invariant sets with solutions of Riemann problems
and define invariant domains only for an approximation process; our definition 
has some similarities with Eq.~(2.14) in \cite{Zhang_JCP_2011}.

\begin{definition}[Invariant set] \label{Def:invariant_set} We say
  that a set $\calB\subset \calA\subset \Real^m$ is invariant for
  \eqref{def:hyperbolic_system} if $\calB$ is convex and for any pair
  $(\bu_L,\bu_R)\in \calB\CROSS \calB$, any unit vector $\bn\in \Real^d$, and
  any $t>0$ such that $t\lambda_{\max}(\bn,\bv_L,\bv_R) \le \frac12$,
  the average of the entropy solution of the Riemann problem
  \eqref{def:Riemann_problem} over the Riemann fan, say
  $\overline\bv(t,\bn,\bv_L,\bv_R)$, remains in $\calB$ and if there
  exists $\tau_0>0$ such that for any $\bsfU\in \calB$ and any
  $\tau\le \tau_0$ the quantity $\bsfU + \tau \bS(\bsfU)$ is in $\calB$.
\end{definition}

We now introduce the notion of invariant domain for an approximation
process.  Let $\Nglob$ be a positive natural number and let
$\bR_h : (\Real^m)^\Nglob \to (\Real^m)^\Nglob$ be a mapping over
$(\Real^m)^\Nglob$. Henceforth we abuse the language by saying that a
member of $(\Real^m)^\Nglob$, say $\bsfU = (\bsfU_1,\ldots,\bsfU_\Nglob)$, is
in the set $\calB\subset \Real^m$ to actually mean that
$\bsfU_i \in \calB$ for all $i \in \intset{1}{\Nglob}$.

\begin{definition}[Invariant domain]\label{Def:invariant_domain} A
convex invariant set $\calB \subset \calA\subset \Real^m$ is said to be
an invariant domain for the mapping $\bR_h : (\Real^m)^\Nglob \to (\Real^m)^\Nglob$ if and only if for any
state $\bsfU$ in $\calB$, the state $\bR_h(\bsfU)$ is also in $\calB$.
\end{definition}

For scalar conservation equations the notions of invariant sets and
invariant domains are closely related to the notion of maximum principle. In the
case of nonlinear hyperbolic systems, the maximum principle property does not
apply and must be replaced by the notion of an invariant domain. To the
best of our knowledge, the definition of invariant sets for the
Riemann problem was introduced in \cite{Nishida_1968}, and the general
theory of positively invariant regions was developed in
\cite{Chueh_Conley_Smoller_1977}. The analysis and development of
numerical methods preserving invariant regions was considered in
\cite{Hoff_1979,Hoff_1985,Frid_2001}. The objective of this paper is
to generalize the invariant domain preserving method originally developed
in \cite{GuePo2016} and the (invariant domain preserving) convex limiting technique introduced in
\cite{GuerNazPopTom2017}.

\begin{remark}[Siff source terms] The assumption that there exists a
  uniform $\tau_0$ so that $\calB+\tau\bS(\calB)\subset \calB$ for all
  $\tau\in [0,\tau_0]$ is not reasonable for hyperbolic systems with stiff source terms since
  it imposes a very severe restriction on the time step. In this case
  other strategies must be adopted. We are going to restrict ourselves
  in the present paper to source terms that are moderately stiff in
  the sense of Definition~\ref{Def:invariant_set}, and we postpone the
  extension of the present work to systems with stiff source terms to a future
  publication.
\end{remark}

\subsection{Examples}   We briefly go over some examples of systems with
source terms and show that the proposed definition for invariant sets is
meaningful/useful.

\subsubsection{Euler + co-volume EOS} For the compressible Euler
equations with covolume of state the dependent variable is
$\bu=(\rho,\bbm,E)\tr$, where $\rho$ is the density, $\bbm$ is the
momentum, and $E$ is the total energy. The flux is
$\polf(\bu)=(\rho\bv, \bbm\otimes \bv + p \polI, \bv(E + p))\tr$ where
$\bv := \bbm/\rho$ and the pressure is given by the equation of state
$p ( 1 - b \rho ) = ( \gamma - 1 ) e \rho$. The constant $b\ge 0$ is
called the covolume and $\gamma > 1$ is the ratio of specific
heats. We have $\calA:=\{\bu\st 1\ge 1- b \rho \ge 0, e(\bu)\ge 0\}$ and
it is shown in \cite{Guermond_Popov_Fast_Riemann_2016} that
$\calB:=\{\bu\st 1\ge 1- b \rho \ge 0, e(\bu)\ge 0, \Phi(\bu)\ge \Phi_0\}$ is an invariant set for 
any $\Phi_0\in \Real$,
where $e(\bu):= E/\rho-\frac12 \bv^2$ is the specific internal energy, and 
$\Phi(\bu)$ is the specific physical entropy.
In this paper we call internal energy the quantity
$\varepsilon(\bu) := \rho e(\bu)$.

\subsubsection{Shallow water}
Saint-Venant's shallow water model describes the time and space
evolution of a body of water evolving in time under the action of
gravity assuming that the deformations of the free surface are small
compared to the water elevation and the bottom topography $z$ varies
slowly. The dependent variable is $\bu=(\waterh,\bq)\tr$, where
$\waterh$ is the water height and $\bq$ is the flow rate in the
direction parallel to the bottom. The flux is
$\polf(\bu)=(\bq,\bq {\otimes}\bv + \frac12 g \waterh^2 \polI)\tr$, where
$\bv:=\bq/\waterh$ and $g$ is the gravity constant. The source
including the influence of the topography and Manning's friction law is
$\bS(\bu) = (0, g \waterh\GRAD z - g n^2 \waterh^{-\gamma} \bq
\|\bv\|_{\ell^2})$,
where $\Manning$ is Manning's roughness coefficient, and $\gamma$ is
an experimental parameter often close to $\frac43$.

It is well-known that $\calA=\calB:=\{\bu \st \waterh\ge 0\}$ is an invariant
set for the system without source term. Let $\bu\in \calB$ and $\tau>0$,
then
$\bu + \tau \bS(\bu) = (\waterh, \bq + \tau (g \waterh\GRAD z - g n^2
\waterh^{-\gamma} \bq \|\bv\|_{\ell^2}))\tr$,
and it is clear that $\bu + \tau \bS(\bu)\in \calB$ for any $\tau\ge 0$
because $h\ge 0$ by definition. Hence $\calB$ is an invariant set
according to Definition~\ref{Def:invariant_set} with $\tau_0=\infty$.

\subsubsection{ZND model}
We now consider the Zel'dovich--von Neumann--D\"oring model for compressible
reacting flows. The dependent variable is
$\bu=(\rho_1,\rho_2,\bbm, E)\tr$, where $\rho_1$ is the density of
the burned gas (fuel), $\rho_2$ is the density of the unburned gas, $\bbm$ is
the momentum of the mixture, and $E$ is the total energy.  The flux is
$\polf(\bu)=(\rho_1\bv,\rho_2\bv, \bbm\otimes \bv + p \polI, \bv(E + p))\tr$
where $\bv := \bbm/(\rho_1+\rho_2)$ and the pressure is given by an
appropriate equation of state; for instance, for ideal polytropic
gases it is common to adopt the so called $\gamma$-law,
$p = (\gamma-1)(E-\frac12 \rho \bv^2 - q_0 \rho_2)$, where $q_0$ is
the specific energy of the unburned gas. Denoting by $T:= p/(\rho_1+\rho_2)$,
the source term is
$\bS(\bu) = (\kappa(T)\rho_2,-\kappa(T)\rho_2,\bzero, 0)\tr$, where
$\kappa(T) = \kappa_0 \text{e}^{-T_0/T}$, where $\kappa_0\ge 0$ is the
reaction rate constant and $T_0$ is the ignition temperature (up to
multiplication by the gas constant $R$).

Denoting $\rho:=\rho_1+\rho_2$ and setting
$e(\bu):= (E-\frac12 \rho \bv^2 - q_0 \rho_2)/\rho$, it can be shown
that $\calA=\calB:=\{\bu \st \rho_1\ge 0, \rho_2\ge 0, e(\bu)\ge 0\}$ is an
invariant set for the homogeneous system, \ie when $\bS\equiv \bzero$.
One can convince oneself that this is indeed true by realizing that
when $\bS\equiv \bzero$, upon denoting $E':= E - q_0 \rho_2$, the
dependent variable $(\rho,\bbm,E')$ solves the compressible Euler
equations, and it is well-known that
$\{\bu \st \rho \ge 0, E'-\frac12\rho \bv^2 \ge 0\}$ is an invariant
set. 

Now let us establish that  for any
$\bu\in \calB$ and any $\tau\le \tau_0:=\kappa_0^{-1}$, the quantity
$\bu + \tau \bS(\bu)$ is in $\calB$. Let $\bu\in \calB$ and let $\tau\ge 0$,
then
$\bu + \tau \bS(\bu) = (\rho_1+ \tau \kappa(T)\rho_2,\rho_2 -\tau
\kappa(T)\rho_2,\bbm, E)\tr$.
Since $T:= (\gamma-1) e(\bu)\ge 0$, $\rho_1\ge0$,
$\rho_2\ge0$, and $\tau\ge 0$, it is clear that $\rho_1+ \tau \kappa(T)\rho_2\ge 0$.
Moreover,
$\rho_2 -\tau \kappa(T)\rho_2 =\rho_2(1 -\tau \kappa(T)) \ge\rho_2(1
-\tau \kappa_0)$;
hence $\rho_2 -\tau \kappa(T)\rho_2\ge 0$ provided
$\tau\le \tau_0:=\kappa_0^{-1}$. Finally, observing that
$\rho:=\rho_1 +\tau \kappa(T)\rho_2 + \rho_2 -\tau \kappa(T)\rho_2 >0$,
we have
$\rho e(\bu + \tau \bS(\bu)) = E - \frac12 \rho \bv^2 - q_0\rho_2(1 -\tau
\kappa(T)) \ge E - \frac12 \rho \bv^2 - q_0\rho_2 = e(\bu) \ge 0$, thereby proving that 
$\bu + \tau \bS(\bu)\in \calB$.

\subsubsection{Euler equations with sources} In some astrophysical
applications one may want to solve the compressible Euler equations
with Coriolis effects, gravitation effects and some heat transfer
effects due to the emission and/or absorption of radiation. The
dependent variables and the flux are the same as those of Euler's
equations, but the source term is
$(0,-2 \bOmega\CROSS \bbm -\rho\GRAD \Phi,-\bbm\SCAL\GRAD \Phi+\rho
H)\tr$,
where $\bOmega$ is the angular velocity of the system, $\Phi$ some
given gravitation potential, and $\rho H$ is a term that aggregates
all the cooling and heating effects. One invariant domain for the
homogeneous system is
$\calA=\calB:=\{\bu\st \rho\ge 0, e(\bu)\ge 0\}$.  Let $\bu \in \calB$
and $\tau\ge 0$. Then
$\bu + \tau \bS(\bu) = (\rho, \bbm -2 \tau \bOmega\CROSS \bbm -\tau
\rho\GRAD\Phi, E-\tau \bbm\SCAL\GRAD \Phi+\tau \rho H)\tr$.
The density of the state $\bu + \tau \bS(\bu)$ is $\rho$, which is
nonnegative by definition. The specific internal energy of the state
$\bu + \tau \bS(\bu)$ is bounded from below as follows:
$e(\bu + \tau \bS(\bu)) \ge e(\bu) - \dt^2(4\bOmega^2 \bv^2 +(\GRAD
\Phi)^2) + \dt H$.
For instance, for a $\gamma$-law equation of state, we have
$e= c(\bu)^2/(\gamma(\gamma-1))$, where $c(\bu)$ is the speed of sound
and $e(\bu + \tau \bS(\bu)) \ge 0$, if
$\tau^2 \le \frac{c(\bu)^2}{2\gamma(\gamma-1)(4\bOmega^2 \bv^2 +(\GRAD
  \Phi)^2)}$
and $\tau H \ge -\frac{e(\bu)}{2} $. If $\GRAD \Phi =\bg$ is a
constant, then the first condition is satisfied if
$\tau^2 \le \frac{c(\bu)^2}{2\gamma(\gamma-1)(4\bOmega^2 M(\bu)^2
  c(\bu)^2 +\|\bg\|_{\ell^2}^2)}$
where $ M(\bu)$ is the local Mach number; assuming that one can
establish that $M(\bu)\le M_{\max}$ uniformly w.r.t. $\bu$, and
$\inf c(\bu) \ge c_{\min}>0$, which is required for hyperbolicity to
hold, then the first condition holds if
$\tau \le \frac{c_{\min}}{(2\gamma(\gamma-1)(4\bOmega^2 M_{\max}^2
  c_{\min}^2 +\|\bg\|_{\ell^2}^2))^{\frac12}}$.
One can also verify that many astrophysical models for the heat
transfer effect lead to existence of $\tau_0'>0$ such that
$\tau H \ge -\frac{e(\bu)}{2} $ for all $\dt\le \tau_0'$; the details
are left to the reader.

\section{Abstract low-order approximation}\label{sec:firstorder} In this section
we describe a generic invariant domain preserving technique for
approximating solutions to \eqref{def:hyperbolic_system}.  In order to
stay general we present the method without referring to any particular
discretization technique, we are going to use instead the graph
theoretic language to describe the method. The method is illustrated
with finite volumes, continuous elements, and discontinuous elements
in \S\ref{Sec:examples_discretization}.

\subsection{The low-order scheme} \label{sec:abstract_low_order} To
identify properly the time stepping technique, we denote by $t^n$ the
current time, $n\in \polN$, and we denote by $\dt$ the current time
step size; that is $t^{n+1}:=t^n+\dt$. We now address the approximation in
space by assuming that we have at hand some finite-dimensional vector
space $X_h$ with some basis $\{\varphi\}_{i\in \calV}$, where
$\varphi_i^n:\Dom\to \Real$, for all $i\in\calV$.  We introduce
$\bX_h^n:=(X_h)^m$ and denote the approximation of $\bu(\cdot,t^n)$ in
$\bX_h$ by $\bu_h^n:=\sum_{i\in \calV} \bsfU_i^n\varphi_i$, with
$\bsfU_i^n\in \calA\subset\Real^m$ for all $i\in \Real^m$. We do not need to know
for the time being what the basis functions
$\{\varphi_i\}_{i\in \calV}$ are, but we assume that this setting
allows us to construct an inviscid (very accurate) approximation of
$\bu(\cdot,t^{n+1})$ in $\bX_h$, denoted
$\bu_h\upGn:=\sum_{i\in \calV} \bsfU_i\upGnp\varphi_i$,  as follows:
\begin{align}
\label{def_Galerkin_scheme}
  \frac{m_i}{\dt}(\bsfU_i\upGnp-\bsfU_i^n)
  + \sum_{j\in \calI(i)} \polf(\bsfU_j^n)\bc_{ij} = m_i \bS(\bsfU_i^n),
\end{align}
for any $i\in\calV$, where the numbers $\{m_i\}_{i \in \vertind}$ are
assumed to be positive. Note here that we use the forward Euler time
stepping.  Higher-order time stepping schemes will be considered in
\S\ref{Sec:SSP_overview}.  For any $i\in\calV$, the set $\calI(i)$ is
a (small) subset of $\calV$, which we call stencil at $i$ or adjacency
list at $i$.  We assume that the following property holds:
$j\in\calI(i)$ iff $i\in\calI(j)$.  We assume also that the
$\Real^d$-valued matrix $\{\bc_{ij}\}_{i\in\calV,j\in\calI(i)}$ has
the following properties:
\begin{align}\label{cijprop}
\bc_{ij} = - \bc_{ji}\quad \text{and} \quad \sum_{j\in \calI(i)} \bc_{ij} = \bzero.
\end{align}
The quantities $m_i$, $\{\bc_{ij}\}_{j\in \calI(i)}$, and the set
$\calI(i)$ depend on the discretization that is chosen.  We are going
to be more specific in \S\ref{Sec:examples_discretization}. We think of
\eqref{def_Galerkin_scheme} as the ``centered'' consistent
approximation of \eqref{def:hyperbolic_system} that delivers optimal
accuracy (for the considered setting) for smooth solutions.

Notice that the above construction allows us to introduce an
undirected finite graph $(\calV,\calE)$, where for any pair
$(i,j)\in\calV\CROSS\calV$, we say that $(i,j)$ is an edge of the
graph, \ie $(i,j)\in\calE$, iff $i\in\calI(j)$ and $j\in\calI(i)$. We
say that $(\calV,\calE)$ is the connectivity graph of the
approximation.

Since \eqref{def_Galerkin_scheme} is ``centered'', it cannot handle
properly shocks and discontinuous data. To address this issue we
introduce some artificial dissipation. We do so by using the graph
Laplacian associated with the connectivity graph $(\calV,\calE)$.  We
assume that the graph viscosity  $\{d_{ij}\upLn\}_{(i,j)\in\calE}$ is scalar and has
the following properties:
\begin{align}\label{dijprop}
  d_{ij}\upLn = d_{ji}\upLn> 0, \quad \text{if} \quad i\ne j.
\end{align}
Although the diagonal value $d_{ii}\upLn$ is not needed, we adopt the convention
$d_{ii}\upLn:=-\sum_{j\in\calI(i)\backslash\{i\}} d_{ij}\upLn$.  This convention will
help us shorten some expressions later.  We are now in position to
define the first-order method on which the rest of the paper is built.
We call low-order update $\bsfU_i\upLnp$ the quantity computed as
follows:
\begin{align}
\label{def_dij_scheme}
  \frac{m_i}{\dt}(\bsfU_i\upLnp-\bsfU_i^n)
  + \sum_{j\in \calI(i)} \polf(\bsfU_j^n)\bc_{ij}
  - \sum_{j\in \calI(i){\setminus}\{i\}} d_{ij}\upLn (\bsfU^n_j-\bsfU^n_i) = m_i \bS(\bsfU_i^n),
\end{align}
for all $i\in\calV$.
Without further assumptions, the scheme has built-in conservation properties;
more specifically, the following holds true.
\begin{lemma}[Conservation]\label{RemLocGlobCons} 
  Assume that $\bS\equiv\bzero$, then the scheme
  \eqref{cijprop}--\eqref{def_dij_scheme} is conservative in the sense
  that the following identity holds for any $n\in \polN$:
\begin{align}\label{globmasscons}
  \sum_{i \in \vertind} m_i \bsfU_i\upLnp = \sum_{i \in \vertind} m_i\bsfU_i^n. 
\end{align}
\end{lemma}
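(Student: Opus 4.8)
The plan is to sum the low-order update equation~\eqref{def_dij_scheme} against the weights $m_i$ over all $i\in\calV$ and show that every term except the two mass terms cancels. Concretely, multiplying~\eqref{def_dij_scheme} by $\dt/m_i$ and summing over $i\in\calV$ gives
\begin{equation*}
  \sum_{i\in\calV} m_i\bsfU_i\upLnp - \sum_{i\in\calV} m_i\bsfU_i^n
  = -\dt\sum_{i\in\calV}\sum_{j\in\calI(i)} \polf(\bsfU_j^n)\bc_{ij}
  + \dt\sum_{i\in\calV}\sum_{j\in\calI(i){\setminus}\{i\}} d_{ij}\upLn(\bsfU_j^n-\bsfU_i^n),
\end{equation*}
where I have already used $\bS\equiv\bzero$ to drop the source contribution. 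It thus suffices to prove that each of the two double sums on the right-hand side vanishes.

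For the viscous (graph Laplacian) term, the key is the skew-symmetry of the summand under exchange of $i$ and $j$. Since $i\in\calI(j)$ iff $j\in\calI(i)$, the double sum ranges over ordered pairs $(i,j)$ with $i\ne j$ that form an edge of the connectivity graph. Using $d_{ij}\upLn=d_{ji}\upLn$ from~\eqref{dijprop}, the term associated with $(i,j)$ is $d_{ij}\upLn(\bsfU_j^n-\bsfU_i^n)$ and the term associated with $(j,i)$ is $d_{ji}\upLn(\bsfU_i^n-\bsfU_j^n)=-d_{ij}\upLn(\bsfU_j^n-\bsfU_i^n)$; these cancel in pairs, so the whole sum is zero. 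For the flux term, the relevant property is $\bc_{ij}=-\bc_{ji}$ from~\eqref{cijprop}. Here one must be a little careful because the summand $\polf(\bsfU_j^n)\bc_{ij}$ is not itself antisymmetric in $(i,j)$; instead I would reindex the whole double sum. Interchanging the roles of $i$ and $j$ (legitimate by the symmetry $j\in\calI(i)\iff i\in\calI(j)$), one gets $\sum_{i\in\calV}\sum_{j\in\calI(i)}\polf(\bsfU_j^n)\bc_{ij}=\sum_{j\in\calV}\sum_{i\in\calI(j)}\polf(\bsfU_i^n)\bc_{ji}=-\sum_{j\in\calV}\sum_{i\in\calI(j)}\polf(\bsfU_i^n)\bc_{ij}$, which is the negative of the original sum; hence it equals zero. (Equivalently, one may group by the value of $j$: the coefficient multiplying $\polf(\bsfU_j^n)$ is $\sum_{i\in\calI(j)}\bc_{ij}=-\sum_{i\in\calI(j)}\bc_{ji}=\bzero$ by the second identity in~\eqref{cijprop}.)

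Combining the two vanishing statements yields $\sum_{i\in\calV} m_i\bsfU_i\upLnp=\sum_{i\in\calV} m_i\bsfU_i^n$ for the single step from $t^n$ to $t^{n+1}$; iterating over $n$ gives~\eqref{globmasscons} for every $n\in\polN$, which is what is claimed. I do not anticipate a genuine obstacle here: the result is a bookkeeping argument, and the only point that requires mild attention is the legitimacy of swapping the order of summation / reindexing the edge sums, which rests squarely on the stated symmetry assumption $j\in\calI(i)\iff i\in\calI(j)$ together with the skew-symmetry properties~\eqref{cijprop} and symmetry~\eqref{dijprop}. The convention $d_{ii}\upLn:=-\sum_{j\ne i} d_{ij}\upLn$ plays no role because the $i=j$ term is explicitly excluded from~\eqref{def_dij_scheme}.
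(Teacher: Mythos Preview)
Your proof is correct and relies on the same skew-symmetry/symmetry properties as the paper. The only cosmetic difference is that the paper first uses $\sum_{j\in\calI(i)}\bc_{ij}=\bzero$ to symmetrize the flux term, writing the scheme as $\frac{m_i}{\dt}(\bsfU_i\upLnp-\bsfU_i^n)+\sum_{j\in\calI(i)}\bsfF_{ij}\upLn=\bzero$ with the single algebraic flux $\bsfF_{ij}\upLn:=(\polf(\bsfU_j^n)+\polf(\bsfU_i^n))\bc_{ij}-d_{ij}\upLn(\bsfU_j^n-\bsfU_i^n)$, and then observes $\bsfF_{ij}\upLn=-\bsfF_{ji}\upLn$ in one stroke; you instead handle the flux sum and the viscous sum separately. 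Both arguments are equivalent, but the paper's packaging has the side benefit of introducing $\bsfF_{ij}\upLn$, which is reused throughout the rest of the article (Remark~\ref{Rem:algeFluxes} and the limiting in \S\ref{Sec:convex_limiting}).
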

\begin{proof} Using that $\sum_{j\in\calI(i)} \bc_{ij}=\bzero$, we
  rewrite~\eqref{def_dij_scheme} in the form
\begin{equation*}
\frac{m_i}{\dt}(\bsfU_i\upLnp-\bsfU_i^n)
+ \sum_{j\in \calI(i)} ( \polf(\bsfU_j^n) + \polf(\bsfU_i^n) )\bc_{ij}
- d_{ij}\upLn (\bsfU^n_j - \bsfU^n_i)  = \bzero.
\end{equation*}
Defining
$\bsfF_{ij}\upLn := ( \polf(\bsfU_j^n) + \polf(\bsfU_i^n) )\bc_{ij} 
- d_{ij}\upLn (\bsfU^n_j - \bsfU^n_i)$,
the above identity implies that
$\sum_{i \in \vertind} m_i \bsfU_i^{L,n+1} = \sum_{i \in \vertind}
m_i\bsfU_i^n + \sum_{i \in \vertind}\sum_{j\in \calI(i)}
\bsfF_{ij}\upLn$.
The assertion is a consequence of the skew-symmetry of $\bc_{ij}$
and the symmetry of $d_{ij}\upLn$, \ie
$\sum_{i \in \vertind}\sum_{j\in \calI(i)} \bsfF_{ij}\upLn=\bzero$.
\end{proof}

\begin{remark}[Consistency] Although the consistency question will be
  addressed later, let us say at this point that consistency is not an
  immediate consequence of \eqref{cijprop} and
  \eqref{dijprop}. Consistency will be achieved provided one can show
  that $\frac{m_i}{\dt}(\bsfU_i\upLnp-\bsfU_i^n)$ is an approximation
  of $\partial_t\bu$ (\ie a moment with a shape function),
  $\sum_{j\in \calI(i)} \polf(\bsfU_j^n)\bc_{ij}$ is an approximation
  of $\DIV \polf(\bu)$ (\ie a moment with a shape function), and
  $m_i \bS(\bsfU_i^n)$ is an approximation of $\bS(\bu)$ (\ie a moment
  with a shape function).  Note that if all the values
  $\{\bsfU_j\}_{j\in\calI(i)}$ are constant, the graph viscosity term
  $ \sum_{j\in \calI(i)} d_{ij}\upLn (\bsfU^n_j - \bsfU^n_i)$
  vanishes; which in some sense implies that
  \eqref{def_dij_scheme} is a first-order consistent
    perturbation of \eqref{def_Galerkin_scheme}. The scalars $m_i$
  and the vectors $\{\bc_{ij}\}_{j\in \calI(i)}$ are not uniquely
  defined and they may take different forms depending on the method of
  choice. In sections \S\ref{secFV}, \S\ref{secCG} and \S\ref{secDG}
  we will describe three methods based on finite volumes, continuous
  finite elements, and discontinuous finite elements, all of which can
  be written in the form \eqref{cijprop}-\eqref{def_dij_scheme}.
\end{remark}

\begin{remark}[Algebraic-Fluxes]\label{Rem:algeFluxes} For further reference it will
be useful to define the following quantity which we henceforth refer to as 
low-order algebraic flux:
\begin{align}\label{defFijabstract}
  \bsfF_{ij}\upLn := ( \polf(\bsfU_j^n) + \polf(\bsfU_i^n) )\bc_{ij}
  - d_{ij}\upLn (\bsfU^n_j - \bsfU^n_i).
\end{align}
Algebraic fluxes will be instrumental for the development of limiting
techniques in \S\ref{sec:ConvLimAbstract}. In particular, the scheme
\eqref{def_dij_scheme} is conveniently rewritten as
follows:
\begin{align}\label{abscons}
\frac{m_i}{\dt}(\bsfU_i\upLnp-\bsfU_i^n)
 + \sum_{j\in \calI(i)} \bsfF_{ij}\upLn = m_i \bS(\bsfU_i^n).
\end{align}
\end{remark}

\begin{remark}[Well-balancing]
  In general, systems with a source term have time-independent solutions,
  \ie fields solving $\DIV\polf(\bu)=\bS(\bu)$, and it is often a
  desirable feature of numerical schemes that they preserve these
  steady states. This lead to the notion of well-balancing introduced
  in \cite{bermudez_vazquez_94,greenberg_leroux_96}; we also refer to
  \cite[\S3]{Huang_Tai-ping_1986} for early ideas on well-balancing.
  Although, well-balancing is a very important notion, it will not be
  addressed in this paper.
\end{remark}

\subsection{Invariant domain preserving graph viscosity}\label{secInvVisco} 
Now we propose a definition of the graph viscosity that makes the
algorithm~\eqref{def_dij_scheme} invariant domain preserving.  Recall that the discretization
setting is still unspecified. Most of
the arguments presented in this subsection are generalizations of those
in \S3.2, \S4.1 and \S4.2 of \cite{GuePo2016}.

Since $\sum_{j\in \calI(i)} \polf(\bsfU_i^n)\bc_{ij} = \boldsymbol{0}$
(see property \eqref{cijprop}) we can rewrite the scheme
\eqref{def_dij_scheme} as follows:
\begin{equation*}
\frac{m_i}{\dt}(\bsfU_i\upLnp-\bsfU_i^n) +\!\!\sum_{j\in \calI(i){\setminus}\{i\}} 2d_{ij}\upLn \bsfU^n_i
+  ( \polf(\bsfU_j^n) - \polf(\bsfU_i^n) ) \bc_{ij}
- d_{ij}\upLn (\bsfU^n_j + \bsfU^n_i)  = m_i \bS(\bsfU_i^n).
\end{equation*}
Then, upon introducing the auxiliary states (recalling that
$d_{ij}\upLn>0$ by assumption),
\begin{align}
\label{def_barstates}
\overline{\bsfU}_{ij}^{n} :=
\frac{1}{2}(\bsfU^{n}_i + \bsfU^{n}_j)
-(\polf(\bsfU_j^n) - \polf(\bsfU_i^n))\frac{\bc_{ij}}{2 d_{ij}\upLn},
\end{align}
with the convention $\overline{\bsfU}_{ii}^{n} := \bsfU^{n}_i$,
the low-order scheme \eqref{def_dij_scheme} can be rewritten as
follows:
\begin{align}
\label{def_dij_scheme_convex}
  \bsfU_i\upLnp = 
\bigg(1 -\!\! \sum_{j \in \calI(i)\backslash\{i\}} \frac{2 \dt d_{ij}\upLn}{m_i} \bigg) 
\bsfU_{i}^{n}
  + \!\! \sum_{j \in \calI(i)\backslash\{i\}} \frac{2 \dt d_{ij}\upLn}{m_i}  
\overline{\bsfU}_{ij}^{n} + \dt \bS(\bsfU_i^n). 
\end{align}
A first key observation we make at this point about \eqref{def_dij_scheme_convex} is that
upon setting $\bn_{ij}:= \bc_{ij}/\|\bc_{ij}\|_{\ell^2}$, we realize
that $\overline\bsfU_{ij}^{n}$ is exactly of the form
$\overline\bu(t,\bn_{ij},\bsfU_i^n,\bsfU_j^n)$ as defined in
\eqref{elemetary_Riemann_pb} with the fake time
$t_{ij}=\|\bc_{ij}\|_{\ell^2}/2d_{ij}\upLn$. Then
Lemma~\ref{Lem:elemetary_Riemann_pb} motivates the following
definition for the graph viscosity coefficients $d_{ij}\upLn$:
\begin{equation}
\label{Def_of_dij} 
d_{ij}\upLn := \max(\lambda_{\max}(\bn_{ij},\bsfU_i^n,\bsfU_j^n)
\|\bc_{ij}\|_{\ell^2},
\lambda_{\max}(\bn_{ji},\bsfU_j^n,\bsfU_i^n) \|\bc_{ji}\|_{\ell^2}), 
\end{equation}
where recall that $\lambda_{\max}(\bn_{ij},\bsfU_i^n,\bsfU_j^n)$ is the maximum wave speed 
defined in \S\ref{Sec:Riemann_pb_average_max_wavespeed}.

\begin{lemma}[Invariance of the auxiliary states]\label{Lem:InvarBar} 
  Let $\calB\subset \calA$ be a convex invariant set for
  \eqref{def:hyperbolic_system} such that
  $\bsfU_i^n,\bsfU_j^n\in \calB$. The state
  $\overline{\bsfU}_{ij}^{n}$ defined in \eqref{def_barstates}, with
  $d_{ij}\upLn$ as defined in \eqref{Def_of_dij}, belongs to $\calB$.
\end{lemma}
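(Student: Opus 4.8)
The plan is to reduce the statement to a direct application of the invariant-set property of $\calB$ (Definition~\ref{Def:invariant_set}) together with the explicit representation of the Riemann-fan average provided by Lemma~\ref{Lem:elemetary_Riemann_pb}. The bridge between the two is the identification, already announced in the paragraph preceding the lemma, of $\overline{\bsfU}_{ij}^{n}$ with $\overline{\bv}(t_{ij},\bn_{ij},\bsfU_i^n,\bsfU_j^n)$ for the fake time $t_{ij}:=\|\bc_{ij}\|_{\ell^2}/(2d_{ij}\upLn)$ and the unit vector $\bn_{ij}:=\bc_{ij}/\|\bc_{ij}\|_{\ell^2}$.

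First I would make this identification precise. Since $t_{ij}\bn_{ij}=\bc_{ij}/(2d_{ij}\upLn)$ and $\bu\mapsto\polf(\bu)\bn$ is linear in $\bn$, formula~\eqref{elemetary_Riemann_pb} of Lemma~\ref{Lem:elemetary_Riemann_pb} gives
\begin{align*}
\overline{\bv}(t_{ij},\bn_{ij},\bsfU_i^n,\bsfU_j^n)
&= \tfrac12(\bsfU_i^n+\bsfU_j^n) - t_{ij}\big(\polf(\bsfU_j^n)\bn_{ij} - \polf(\bsfU_i^n)\bn_{ij}\big)\\
&= \tfrac12(\bsfU_i^n+\bsfU_j^n) - \big(\polf(\bsfU_j^n) - \polf(\bsfU_i^n)\big)\tfrac{\bc_{ij}}{2d_{ij}\upLn},
\end{align*}
which is exactly $\overline{\bsfU}_{ij}^{n}$ as defined in~\eqref{def_barstates}. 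The only nontrivial point here is that Lemma~\ref{Lem:elemetary_Riemann_pb} is applicable, i.e. that the admissibility condition $t_{ij}\lambda_{\max}(\bn_{ij},\bsfU_i^n,\bsfU_j^n)\le\tfrac12$ holds. This is precisely where the definition~\eqref{Def_of_dij} of the graph viscosity is used: $d_{ij}\upLn$ is the larger of two quantities, one of which is $\lambda_{\max}(\bn_{ij},\bsfU_i^n,\bsfU_j^n)\|\bc_{ij}\|_{\ell^2}$, so $d_{ij}\upLn\ge\lambda_{\max}(\bn_{ij},\bsfU_i^n,\bsfU_j^n)\|\bc_{ij}\|_{\ell^2}$, and therefore
\[
t_{ij}\lambda_{\max}(\bn_{ij},\bsfU_i^n,\bsfU_j^n)
=\frac{\|\bc_{ij}\|_{\ell^2}\,\lambda_{\max}(\bn_{ij},\bsfU_i^n,\bsfU_j^n)}{2d_{ij}\upLn}\le\frac12 .
\]

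With the representation established, I would conclude by invoking Definition~\ref{Def:invariant_set} verbatim: $\calB$ is a convex invariant set, $\bsfU_i^n,\bsfU_j^n\in\calB$, $\bn_{ij}$ is a unit vector of $\Real^d$, and $t_{ij}>0$ satisfies $t_{ij}\lambda_{\max}(\bn_{ij},\bsfU_i^n,\bsfU_j^n)\le\tfrac12$; hence the average of the entropy solution of the Riemann problem over the Riemann fan, $\overline{\bv}(t_{ij},\bn_{ij},\bsfU_i^n,\bsfU_j^n)$, belongs to $\calB$. By the previous step this average equals $\overline{\bsfU}_{ij}^{n}$, which therefore lies in $\calB$.

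I do not anticipate a genuine obstacle; the argument is essentially self-contained once the two ingredients are lined up, and the one point worth stating explicitly is the admissibility inequality above, which is exactly the reason the viscosity is built from the maximum wave speed in~\eqref{Def_of_dij}. A minor technical caveat: assumption~\eqref{dijprop} guarantees $d_{ij}\upLn>0$, so $t_{ij}$ is well defined and positive; and if an implementation enlarges $d_{ij}\upLn$ beyond the value in~\eqref{Def_of_dij} (for instance to impose a strictly positive floor), nothing changes, because increasing $d_{ij}\upLn$ only decreases $t_{ij}$ and hence preserves the admissibility condition of Lemma~\ref{Lem:elemetary_Riemann_pb}, while $\overline{\bsfU}_{ij}^n$ remains a Riemann-fan average of $\bsfU_i^n$ and $\bsfU_j^n$ at an admissible time, hence still in $\calB$.
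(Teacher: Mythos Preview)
Your proof is correct and follows essentially the same route as the paper: identify $\overline{\bsfU}_{ij}^{n}$ with the Riemann-fan average $\overline{\bv}(t_{ij},\bn_{ij},\bsfU_i^n,\bsfU_j^n)$ at the fake time $t_{ij}=\|\bc_{ij}\|_{\ell^2}/(2d_{ij}\upLn)$, verify the CFL-type admissibility condition $t_{ij}\lambda_{\max}\le\tfrac12$ from the definition~\eqref{Def_of_dij} of $d_{ij}\upLn$, and then invoke the invariant-set property of $\calB$. Your write-up is in fact a bit more explicit than the paper's in separating the identification step from the admissibility step, but the argument is the same.
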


\begin{proof} Let us set
  $t_{ij}:= \|\bc_{ij}\|_{\ell^2}/(2d_{ij}\upLn)$, then according to
  Lemma~\ref{Lem:elemetary_Riemann_pb}, we have
  $\overline\bsfU_{ij}^{n}:=\overline\bu(t_{ij},\bn_{ij},\bsfU_{i}^n,\bsfU_{j}^n)
  \in \calB$
  if
  $\lambda_{\max}(\bn_{ij},\bsfU_{i}^n,\bsfU_{j}^n) t_{ij}\le
  \frac12$.
  But the definition \eqref{Def_of_dij} implies that
  $d_{ij}\upLn \ge \lambda_{\max}(\bn_{ij},\bsfU_{i}^n,\bsfU_{j}^n)
  \|\bc_{ij}\|_{\ell^2}$,
  which is the CFL condition
  $t_{ij}\lambda_{\max}(\bn_{ij},\bu_L,\bu_R)\le \frac12$ for the
  conclusions of Lemma~\ref{Lem:elemetary_Riemann_pb} to hold. This
  proves that
  $\overline\bsfU_{ij}^{n}:=\overline\bu(t,\bn_{ij},\bsfU_{i}^n,\bsfU_{j}^n)
  \in \calB$ for all $j\in \calI(i)$ since $\calB$ is a convex invariant set.
\end{proof}

A second important observation about \eqref{def_dij_scheme_convex} is
that $\bsfU_i\upLnp - \dt \bS(\bsfU_i^n)$ is a convex combination of
$\bsfU_i^n$ and the states
$\{\overline\bsfU_{ij}^n\}_{j\in\calI(i){\setminus}\{i\}}$ provided
$\dt$ is small enough. This is the key to the following result.

\begin{theorem}[Local invariance] \label{Thm:UL_is_invariant} Let
  $n\ge 0$ and let $i \in \vertind$. Assume that $\dt$ is small enough
  so that $1+4\dt \frac{d_{ii}\upLn}{m_i}\ge 0$ and $2\dt \le \tau_0$.  Let
  $\calB\subset \calA$ be a convex invariant set for
  \eqref{def:hyperbolic_system} such that
  $\bsfU_j^n\in \calB$ for all $j\in \calI(i)$, then
  $\bsfU_i\upLnp \in \calB$.
\end{theorem}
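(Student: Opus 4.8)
The plan is to write $\bsfU_i\upLnp$ as a convex combination of states that are already known to belong to $\calB$, and then use that $\calB$ is convex. I start from the reformulation \eqref{def_dij_scheme_convex} of the low-order update. Using the sign convention $d_{ii}\upLn=-\sum_{j\in\calI(i)\setminus\{i\}}d_{ij}\upLn$, the coefficient of $\bsfU_i^n$ in \eqref{def_dij_scheme_convex} equals $1+\tfrac{2\dt d_{ii}\upLn}{m_i}$, the coefficients of the auxiliary states $\overline\bsfU_{ij}^n$ are $\tfrac{2\dt d_{ij}\upLn}{m_i}>0$ (since $d_{ij}\upLn>0$ for $j\ne i$), and these weights sum to $1$; the only obstruction to \eqref{def_dij_scheme_convex} being an outright convex combination is the source contribution $\dt\bS(\bsfU_i^n)$.

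The key step is to absorb $\dt\bS(\bsfU_i^n)$ into a forward-Euler update carried out with the doubled time step. Writing $\dt\bS(\bsfU_i^n)=\tfrac12\bigl(\bsfU_i^n+2\dt\bS(\bsfU_i^n)\bigr)-\tfrac12\bsfU_i^n$ and substituting into \eqref{def_dij_scheme_convex} yields
\[
\bsfU_i\upLnp=\Bigl(\tfrac12+\tfrac{2\dt d_{ii}\upLn}{m_i}\Bigr)\bsfU_i^n
+\tfrac12\bigl(\bsfU_i^n+2\dt\bS(\bsfU_i^n)\bigr)
+\sum_{j\in\calI(i)\setminus\{i\}}\tfrac{2\dt d_{ij}\upLn}{m_i}\,\overline\bsfU_{ij}^n .
\]
I then check that the right-hand side is a convex combination: the weight $\tfrac12+\tfrac{2\dt d_{ii}\upLn}{m_i}$ is nonnegative exactly under the stated CFL restriction $1+4\dt\tfrac{d_{ii}\upLn}{m_i}\ge0$; the weight $\tfrac12$ and the weights $\tfrac{2\dt d_{ij}\upLn}{m_i}$ are nonnegative; and the total equals $1$ by the convention defining $d_{ii}\upLn$.

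It then remains to argue that each state appearing in the combination lies in $\calB$. First, $\bsfU_i^n\in\calB$ since $i\in\calI(i)$ and $\bsfU_j^n\in\calB$ for all $j\in\calI(i)$ by hypothesis. Second, $\overline\bsfU_{ij}^n\in\calB$ for every $j\in\calI(i)\setminus\{i\}$ by Lemma~\ref{Lem:InvarBar}, using that $d_{ij}\upLn$ is given by \eqref{Def_of_dij}. Third, $\bsfU_i^n+2\dt\bS(\bsfU_i^n)\in\calB$ because $2\dt\le\tau_0$ and $\calB$ is an invariant set in the sense of Definition~\ref{Def:invariant_set}. Since $\calB$ is convex, the convex combination $\bsfU_i\upLnp$ belongs to $\calB$, which is the assertion.

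There is no deep difficulty here: the argument is essentially bookkeeping on top of Lemma~\ref{Lem:InvarBar} (hence ultimately Lemma~\ref{Lem:elemetary_Riemann_pb}). The one point that needs care is the source-term split: reserving exactly half of the self-weight $1+\tfrac{2\dt d_{ii}\upLn}{m_i}$ for the step $\bsfU_i^n\mapsto\bsfU_i^n+2\dt\bS(\bsfU_i^n)$ is what makes the time step $2\dt$ (rather than $\dt$) appear in the invariant-set condition $\calB+\tau\bS(\calB)\subset\calB$, and it is also what forces the factor $4$ instead of $2$ in the CFL condition $1+4\dt\tfrac{d_{ii}\upLn}{m_i}\ge0$. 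One should also read the hypothesis so that it covers the index $j=i$, so that $\bsfU_i^n\in\calB$ is available.
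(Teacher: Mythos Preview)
Your proof is correct and essentially identical to the paper's. The paper groups the same terms as $\bsfU_i\upLnp=\tfrac12\bsfW_1+\tfrac12\bsfW_2$ with $\bsfW_1=(1+4\dt\,d_{ii}\upLn/m_i)\bsfU_i^n+\sum_{j\ne i}\tfrac{4\dt d_{ij}\upLn}{m_i}\overline\bsfU_{ij}^n$ and $\bsfW_2=\bsfU_i^n+2\dt\bS(\bsfU_i^n)$, showing each $\bsfW_k\in\calB$ separately; expanding $\tfrac12\bsfW_1$ recovers exactly your single convex combination, so the two arguments differ only in bookkeeping.
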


\begin{proof}
Using the definition $d_{ii}\upLn:=\sum_{j \in \calI(i)\backslash\{i\}} -d_{ij}\upLn$,
we first notice that \eqref{def_dij_scheme_convex} can be rewritten as follows:
\begin{equation}
  \bsfU_i\upLnp = \frac12\Bigg( 
\bigg(1+4\dt \frac{d_{ii}\upLn}{m_i}\bigg) \bsfU_{i}^{n}
  + \!\! \sum_{j \in \calI(i)\backslash\{i\}}\!\!\! \frac{4 \dt d_{ij}\upLn}{m_i} 
\overline{\bsfU}_{ij}^{n}\Bigg) 
+ \frac12\big(\bsfU_{i}^{n} + 2\dt\bS(\bsfU_i^n)\big).\label{def_dij_scheme_convex_with_source}
\end{equation}
With obvious notation, let us rewrite the above equation as follows
$\bsfU_i\upLnp = \frac12 \bsfW_1 + \frac12 \bsfW_2$.  Owing to
the local CFL assumption $1+4\dt \frac{d_{ii}\upLn}{m_i}\ge 0$,
$\bsfW_1$ is a convex combination of $\bsfU_i^n$ and the collection of
states $\{\overline\bsfU_{ij}^{n}\}_{j\in \calI(i)}$. 
But we have established in  Lemma~\ref{Lem:InvarBar} that 
$\overline\bsfU_{ij}^{n}\in \calB$. Then, the
convexity of $\calB$ implies $\bsfW_1$ is in $\calB$. Since $\calB$ is an
invariant set according to Definition~\ref{Def:invariant_set} and
$\bsfU_i^{n}\in \calB$ by assumption, the condition $2\dt \le \tau_0$
implies that $\bsfW_2:= \bsfU_{i}^{n} + 2\dt\bS(\bsfU_i^n)$ is a
member of $\calB$. In conclusion, the convexity of $\calB$ implies that
$\bsfU_i\upLnp = \frac12 \bsfW_1 + \frac12 \bsfW_2$ is in $\calB$.
\end{proof}

\begin{corollary}[Global invariance]\label{Cor:global_invariance} Let
  $n\in \polN$. Assume that the global
  CFL condition
  $\min_{i \in \calV} \big(1+4\dt
  \frac{d_{ii}\upLn}{m_i}\big)\ge 0$
  holds and $2\dt \le \tau_0$. Let $\calB\subset \calA$ be a convex invariant set. 
  Assume that
  $\bsfU_i^n \in \calB$ for all $i\in\calV$, then
  $\bsfU_i\upLnp\in \calB$ for all  $i \in \calV$.
\end{corollary}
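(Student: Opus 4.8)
The plan is to obtain the corollary as an immediate consequence of Theorem~\ref{Thm:UL_is_invariant}, applied vertex by vertex; no genuinely new argument is needed.

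First I would unpack the global CFL hypothesis. By the very meaning of the minimum, the inequality $\min_{i\in\calV}\big(1+4\dt\frac{d_{ii}\upLn}{m_i}\big)\ge 0$ is equivalent to the statement that $1+4\dt\frac{d_{ii}\upLn}{m_i}\ge 0$ holds for every individual $i\in\calV$. Combined with the assumption $2\dt\le\tau_0$, this is exactly the per-index time-step restriction required in the hypotheses of Theorem~\ref{Thm:UL_is_invariant}.

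Next I would verify the data assumption at a fixed but arbitrary $i\in\calV$. Since $\calI(i)\subseteq\calV$, the hypothesis that $\bsfU_k^n\in\calB$ for all $k\in\calV$ implies in particular $\bsfU_j^n\in\calB$ for all $j\in\calI(i)$. Thus all hypotheses of Theorem~\ref{Thm:UL_is_invariant} are met at the index $i$, and that theorem yields $\bsfU_i\upLnp\in\calB$. As $i\in\calV$ was arbitrary, we conclude $\bsfU_i\upLnp\in\calB$ for all $i\in\calV$, which is the assertion.

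The only point that deserves a word — and it is not really an obstacle — is making explicit that the single scalar inequality in the corollary's hypothesis encodes the collection of per-vertex CFL conditions of the local theorem; this is immediate. A self-contained alternative would be to rerun the proof of Theorem~\ref{Thm:UL_is_invariant}: rewrite each low-order update in the convex-combination form \eqref{def_dij_scheme_convex_with_source}, use Lemma~\ref{Lem:InvarBar} to place each $\overline{\bsfU}_{ij}^{n}$ in $\calB$, and invoke convexity of $\calB$ together with $2\dt\le\tau_0$. Quoting the local theorem, however, avoids the repetition and is the cleaner route.
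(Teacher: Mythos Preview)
Your proposal is correct and matches the paper's approach: the corollary is stated without proof immediately after Theorem~\ref{Thm:UL_is_invariant}, signalling that it is meant to follow exactly as you describe, by applying the local result at each $i\in\calV$ once the global CFL condition is unpacked into the per-vertex conditions.
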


\begin{theorem}[Entropy inequality]\label{Thm:entrop_ineq} 
   Let $(\eta,\bq)$ be an entropy pair
  for \eqref{def:hyperbolic_system}.  Let $n\ge 0$ and $i\in\calV$.
  Assume also that the local CFL condition holds
  $1+2\dt\frac{d_{ii}\upLn}{m_i}\ge 0$ and $2\dt\le \tau_0$, then the following
  local entropy inequality holds true for any entropy pair  $(\eta,\bq)$ of the system
  \eqref{def:hyperbolic_system}:
\begin{multline}
\frac{m_i}{\dt} (\eta(\bsfU_i\upLnp) - \eta(\bsfU_i^n))   
+ \sum_{j\in\calI(i)} \bq(\bsfU_j^n)\bc_{ij} 
- d_{ij}\upLn (\eta(\bsfU_{j}^{n})-\eta(\bsfU_{i}^{n})) \\
\le m_i \bS(\bsfU_i^n)\SCAL  \GRAD \eta(\bsfU_i\upLnp) . 
\label{discrete_entropy_inequality}
\end{multline}
\end{theorem}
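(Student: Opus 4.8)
The plan is to mirror the structure of the proof of Theorem~\ref{Thm:UL_is_invariant}, but now at the level of the entropy rather than the conserved state. First I would start from the representation \eqref{def_dij_scheme_convex_with_source} of the low-order update, written in the form $\bsfU_i\upLnp = \tfrac12\bsfW_1 + \tfrac12\bsfW_2$, but using the weaker CFL hypothesis $1+2\dt\frac{d_{ii}\upLn}{m_i}\ge 0$ I would instead split the update so that $\bsfU_i\upLnp - \dt\bS(\bsfU_i^n)$ is itself a convex combination of $\bsfU_i^n$ and the auxiliary states $\{\overline\bsfU_{ij}^n\}_{j\in\calI(i)\setminus\{i\}}$ with weights $1+2\dt\frac{d_{ii}\upLn}{m_i}$ and $\frac{2\dt d_{ij}\upLn}{m_i}$; this is exactly \eqref{def_dij_scheme_convex}. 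The point of the factor-$\tfrac12$ bookkeeping in the invariant-domain proof was to absorb the source, whereas for the entropy the source is handled separately via the final term $m_i\bS(\bsfU_i^n)\SCAL\GRAD\eta(\bsfU_i\upLnp)$, so I do not need it here, and the looser CFL constant reflects that.

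Next, the heart of the argument: I would apply convexity of $\eta$ (entropies are convex by definition of an entropy pair) to the convex-combination representation, obtaining
\begin{equation*}
\eta\big(\bsfU_i\upLnp - \dt\bS(\bsfU_i^n)\big)
\le \Big(1+2\dt\tfrac{d_{ii}\upLn}{m_i}\Big)\eta(\bsfU_i^n)
 + \sum_{j\in\calI(i)\setminus\{i\}} \tfrac{2\dt d_{ij}\upLn}{m_i}\,\eta(\overline\bsfU_{ij}^n).
\end{equation*}
Then I would invoke the entropy inequality \eqref{entropy_elemetary_Riemann_pb} of Lemma~\ref{Lem:elemetary_Riemann_pb}, which applies to each $\overline\bsfU_{ij}^n$ since — as observed just before \eqref{Def_of_dij} — it equals $\overline\bu(t_{ij},\bn_{ij},\bsfU_i^n,\bsfU_j^n)$ with the fake time $t_{ij}=\|\bc_{ij}\|_{\ell^2}/(2d_{ij}\upLn)$, and the CFL-type bound $\lambda_{\max}t_{ij}\le\tfrac12$ holds by the choice \eqref{Def_of_dij} exactly as in Lemma~\ref{Lem:InvarBar}. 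This gives $\eta(\overline\bsfU_{ij}^n)\le \tfrac12(\eta(\bsfU_i^n)+\eta(\bsfU_j^n)) - t_{ij}(\bq(\bsfU_j^n)\SCAL\bn_{ij}-\bq(\bsfU_i^n)\SCAL\bn_{ij})$. Substituting $t_{ij}\bn_{ij} = \bc_{ij}/(2d_{ij}\upLn)$ and using $\sum_{j}\bq(\bsfU_i^n)\bc_{ij}=\bzero$ (property \eqref{cijprop}) to symmetrize, the weighted sum collapses to $\eta(\bsfU_i^n) - \frac{\dt}{m_i}\sum_{j\in\calI(i)}\big(\bq(\bsfU_j^n)\bc_{ij} - d_{ij}\upLn(\eta(\bsfU_j^n)-\eta(\bsfU_i^n))\big)$, which after multiplying through by $m_i/\dt$ and rearranging is precisely the left-hand side of \eqref{discrete_entropy_inequality} except that the entropy is evaluated at $\bsfU_i\upLnp-\dt\bS(\bsfU_i^n)$ rather than at $\bsfU_i\upLnp$.

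Finally I would handle the source term by a first-order Taylor/convexity estimate: since $\eta$ is convex (hence lies above its tangent plane), $\eta(\bsfU_i\upLnp - \dt\bS(\bsfU_i^n)) \ge \eta(\bsfU_i\upLnp) - \dt\,\bS(\bsfU_i^n)\SCAL\GRAD\eta(\bsfU_i\upLnp)$, so replacing the left-hand side $\eta(\bsfU_i\upLnp-\dt\bS(\bsfU_i^n))$ by the smaller quantity $\eta(\bsfU_i\upLnp) - \dt\,\bS(\bsfU_i^n)\SCAL\GRAD\eta(\bsfU_i\upLnp)$ preserves the inequality; moving the source contribution to the right then yields exactly \eqref{discrete_entropy_inequality}. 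The main obstacle I anticipate is bookkeeping rather than conceptual: getting the CFL constant right (it is $1+2\dt d_{ii}\upLn/m_i\ge0$, twice as generous as the factor appearing implicitly in Theorem~\ref{Thm:UL_is_invariant} precisely because here the source is pulled out linearly via its gradient instead of being split off as the separate convex component $\bsfW_2$), and making sure the $t_{ij}$-to-$\bc_{ij}$ substitution together with the $\sum_j\bq(\bsfU_i^n)\bc_{ij}=\bzero$ cancellation reproduces the flux divergence term $\sum_j\bq(\bsfU_j^n)\bc_{ij}$ with the correct sign and the viscous term $-d_{ij}\upLn(\eta(\bsfU_j^n)-\eta(\bsfU_i^n))$ with the correct coefficient. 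One should also note that the hypothesis $2\dt\le\tau_0$ is in fact not needed for this argument — the source enters only through its gradient at $\bsfU_i\upLnp$ and no invariant-set membership of $\bsfU_i^n+2\dt\bS(\bsfU_i^n)$ is invoked — but I would keep it in the statement for uniformity with the rest of the section, or else remark that it can be dropped.
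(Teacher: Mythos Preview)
Your proposal is correct and follows essentially the same approach as the paper: convex-combination representation \eqref{def_dij_scheme_convex} under the CFL bound $1+2\dt d_{ii}\upLn/m_i\ge 0$, Jensen on $\eta$, the Riemann-fan entropy bound \eqref{entropy_elemetary_Riemann_pb} for each $\overline\bsfU_{ij}^n$, the $\sum_j\bc_{ij}=\bzero$ cancellation, and finally the tangent-plane inequality for $\eta$ to extract the source term. Your observation that the hypothesis $2\dt\le\tau_0$ is not actually used in the argument is also correct.
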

\begin{proof}
  Let $i \in \vertind$ and let $(\eta,\bq)$ be an entropy pair for the system
  \eqref{def:hyperbolic_system}. Then recalling
  \eqref{def_dij_scheme_convex}, the CFL condition and the convexity
  of $\eta$ imply that
\begin{align*}
  \eta(\bsfU_i\upLnp - \dt \bS(\bsfU_i^n))  \le 
  \Big(1 - \sum_{j\in \calI(i)\backslash\{i\}} \frac{2\dt d_{ij}\upLn}{m_i} \Big)\eta(\bsfU_i^n) 
  + \sum_{j\in \calI(i)\backslash\{i\}}\frac{2\dt d_{ij}\upLn}{m_i}\eta(\overline\bsfU_{ij}^{n}).   
\end{align*}
Lemma~\ref{Lem:elemetary_Riemann_pb} implies that 
$\eta(\overline\bsfU^{n}_{ij})
\le \tfrac{1}{2}(\eta(\bsfU_i^n)+\eta(\bsfU_j^n)) 
  - t_{ij}(\bq(\bsfU_j^n)\SCAL\bn_{ij} - \bq(\bsfU_i^n)\SCAL\bn_{ij})$,
with $t_{ij}=\|\bc_{ij}\|_{\ell^2}/2 d_{ij}\upLn$; hence,
\begin{multline*}
\frac{m_i}{\dt} (\eta(\bsfU_i\upLnp - \dt \bS(\bsfU_i^n)) - \eta(\bsfU_i^n))   \le 
 \sum_{j\in \calI(i)\backslash\{i\}} 2 d_{ij}\upLn(\eta(\overline\bsfU_{ij}^{n}) -\eta(\bsfU_i^n)) \\
\le  \sum_{j\in \calI(i)\backslash\{i\}} d_{ij}\upLn(\eta(\bsfU_{j}^{n}) - \eta(\bsfU_i^n))
- \|\bc_{ij}\|_{\ell^2}(\bq(\bsfU_j^n)\SCAL\bn_{ij} - \bq(\bsfU_i^n)\SCAL\bn_{ij}).
\end{multline*}
Moreover, the convexity of $\eta$ implies that 
\[
\eta(\bsfU_i\upLnp) - \dt \bS(\bsfU_i^n)\SCAL  \GRAD \eta(\bsfU_i\upLnp) \le
 \eta(\bsfU_i\upLnp - \dt \bS(\bsfU_i^n)). 
\]
The conclusion follows from the definitions of $\bn_{ij}$, $\bc_{ij}$
and $d_{ij}\upLn$.
\end{proof}

\begin{remark}[Terminology] In order to refer to the scheme
  \eqref{def_dij_scheme} with \eqref{Def_of_dij}, following
  \citep{GuerNazPopTom2017} we will use the acronym GMS-GV, standing
  for Guaranteed Maximum Speed Graph Viscosity.
\end{remark}

\begin{remark}[Symmetry] Since $\bc_{ij} = -\bc_{ji}$ we note that
  $\overline{\bsfU}_{ij}^{n} = \overline{\bsfU}_{ji}^{n}$ (see
  definition \eqref{def_barstates}) which in turn implies that
  $\lambda_{\max}(\bn_{ij},\bsfU_i^n,\bsfU_j^n)=
  \lambda_{\max}(\bn_{ji},\bsfU_j^n,\bsfU_i^n)$.
  In conclusion
  $\lambda_{\max}(\bn_{ij},\bsfU_i^n,\bsfU_j^n) \|\bc_{ij}\|_{\ell^2}=
  \lambda_{\max}(\bn_{ji},\bsfU_j^n,\bsfU_i^n) \|\bc_{ji}\|_{\ell^2}$.
Note that these properties may not hold at the boundary if 
nontrivial boundary conditions are applied.
\end{remark}

\begin{remark}[Positivity]
  It may happen that estimating a guaranteed upper bound
  $\lambda_{\max}(\bn,\bsfU_L,\bsfU_R)$ on the maximum wave speed in
  the Riemann problem is difficult. In this case one has to come up
  with some informed guess.  We now give a lower bound on
  $\lambda_{\max}(\bn,\bsfU_L,\bsfU_R)$ that guaranties positivity if
  it happens that some components of $\bsfU$, say $\sfU$, has to be
  positive (think of the density and the total energy in the Euler
  equations or the water height in the shallow water equations). 
  Let $\bef_{\sfU}:\calA\to \Real^d$ be the component of
  $\polf$ that corresponds to the component $\sfU$ of $\bsfU$. Assume
  that $\calB:=\{\bsfU\in\calA\st \sfU>0\}$ is an invariant set for
  \eqref{def:Riemann_problem}, assume also that the estimate on the
  maximum wave speed is such that the
  $\lambda_{\max}(\bn_{ij},\bsfU_i^n,\bsfU_j^n)\ge
  \max \big(\frac{\bef_{\sfU}(\bsfU_j^n) \SCAL \bn_{ij}}{\sfU_j^n},0\big)$,
  then under the same CFL condition as in
  Theorem~\ref{Thm:UL_is_invariant} and
  Corollary~\ref{Cor:global_invariance}, the set
  $\widetilde \calB:=\{\bsfU\in \Real^m\st \sfU>0\} \subseteq \calB$ is such that
  $\big(\bsfU_i^n \in \calB,  \ \forall i\in \calV\big)
  \Rightarrow \big(\bsfU_i\upLnp \in \widetilde\calB,  \ \forall i \in \calV\big)$.
  Let us finally illustrate the
  above result in one space dimension.  For instance, for finite
  volumes and for piecewise linear continuous elements in one space
  dimension, one has $\bc_{ij}=\frac 12 \bn_{ij}$ (see
  \S\ref{Sec:examples_discretization}). Then, for the density in the
  Euler equations, or for the water height in the Saint-Venant
  equations, the above estimate becomes
  $\lambda_{\max}(\bn_{ij},\bsfU_i^n,\bsfU_j^n)\ge
  \max\big(\frac12\bn_{ij}\SCAL\bsfV(\bsfU_j^n),0\big)$
  where $\bsfV(\bsfU)$ is the velocity.  One recognizes here the
  standard upwind estimate.
\end{remark}

\section{Examples of
  discretizations} \label{Sec:examples_discretization} In this section
we illustrate the GMS-GV scheme described in \S\ref{sec:firstorder} in
the following three space discretization settings: finite volumes,
continuous finite elements, and discontinuous elements.

\subsection{Finite Volumes}\label{secFV} 
We now illustrate the construction of the abstract low-order
scheme~\eqref{cijprop}--\eqref{def_dij_scheme} in the context of finite volumes
(FV).

\subsubsection{Technical preliminaries}

We unify our presentation by putting into a single framework the
so-called cell-centered and vertex-centered finite volume techniques,
see Figure~\ref{FigPatchFV}.  We refer the reader to
\cite{Barth2004,Eymard2000} for comprehensive reviews on the finite
volume techniques.  For any manifold $E\subset \Real^d$ of dimension
$l$ we denote by $|E|$ the $l$-Lebesgue measure of $E$.  We assume
that we have at hand a partition of the computational domain $\Dom$
into polygonal (polyhedral) cells $\{K_i\}_{i \in\calV}$. We
henceforth denote by $\calT_h$ this collection of cells. For any pair
of cells $K_i,K_j$ having a common interface, we denote by
$\Gamma_{ij}:=\partial K_i\cap\partial K_j$ the interface in question. The unit vector on
$\Gamma_{ij}$ pointing from $K_i$ to $K_j$ is denoted $\bn_{ij}$.

 \begin{figure}[h]
\centering
\includegraphics[scale=0.35]{\FIGS/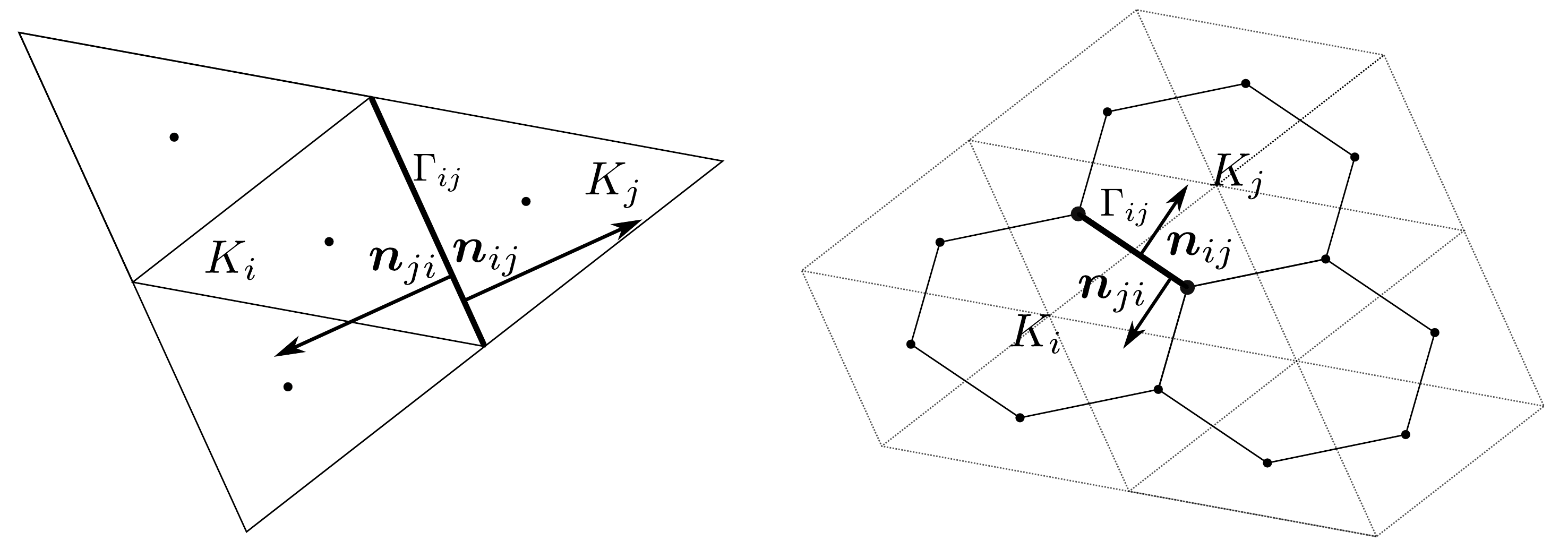}
\caption{Finite volume patch arising from a cell-centered
  discretization (left) and a vertex-centered discretization
  (right).}\label{FigPatchFV}
\end{figure}

\subsubsection{Definitions of $(\calV,\calE)$, $m_i$, and $\bc_{ij}$}
We define the connectivity graph $(\calV,\calE)$ by identifying the
vertices of this graph with the cells in $\calT_h$, and we say that a
pair of cells $K_i,K_j$ form an edge of the graph, \ie
$(i,j)\in\calE$, iff the cells $K_i$ and $K_j$ share an interface, \ie
$\partial K_i\cap\partial K_j$ is a $(d-1)$-manifold of positive
measure.  For any $i\in\calV$ we define the adjacency list $\calI(i)$ to be the list of all the 
cells in $\calT_h$ sharing an interface with $K_i$, \ie 
$\calI(i) := \{ j \in \vertind\st\ (i,j)\in \calE \}$,
see Figure~\ref{FigPatchFV}. Denoting by $\indic_{K_j}$ the indicator
function of the cell $K_j$, we set $X_h:=\vect\{ \indic_{K_j}\}_{j\in\calV}$ and then define the approximation space
$\bX_h:=(X_h)^m=\{\sum_{j\in\calV} \bsfV_j\indic_{K_j}\st
\bsfV_j\in \Real^m, \forall j\in \calV\}$.

Let
$\bu_{h}^n = \sum_{j\in\calV} \bsfU_j^n \indic_{K_j}\in
\bX_h$
be the approximation of $\bu$ at time $t^n$, then most first-order finite volume schemes are written as follows
\begin{align*}
  \frac{|K_i|}{\dt}(\bsfU_i\upLnp-\bsfU_i^n)
  + \sum_{j\in \calI(i)\backslash\{i\}} \bsfF_{ij}\upLn = |K_i| \bS(\bsfU_i^n),
\end{align*}
where $\bsfF_{ij}\upLn$ is usually the Lax-Friedrichs/Rusanov flux (integrated over
$\Gamma_{ij}$):
\begin{align}\label{FVflux}
\bsfF_{ij}\upLn := \frac{|\Gamma_{ij}|}{2}( \polf(\bsfU_j^n) + \polf(\bsfU_i^n) )\bn_{ij}
- \alpha_{ij}\upLn (\bsfU^n_j - \bsfU^n_i),
\end{align}
where $\alpha_{ij}\upLn$ is some wave speed.
Hence, we recover the generic expression~\eqref{def_dij_scheme} for
the finite volume framework by setting
\begin{align}
m_i &:= |K_i|, \quad
\bc_{ij} := \frac{|\Gamma_{ij}|}{2} \bn_{ij},\ \forall j\in \calI(i)\backslash\{i\}, \quad
\bc_{ii}:=\bzero, \quad
d_{ij}\upLn := \alpha_{ij}\upLn. 
\end{align}
The definition of $\bc_{ij}$ immediately implies that $\bc_{ij} = -\bc_{ij}$, and
the Stokes theorem implies that
$\sum_{j\in \calI(i)} \bc_{ij} = \tfrac{1}{2} \sum_{j\in
  \calI(i)\backslash\{i\}} \bn_{ij} |\Gamma_{ij}| = \tfrac{1}{2}
\int_{\partial K_i} \bn \, \diff s = \bzero$, which is the
conservation property stated in \eqref{cijprop}.
 Note that $\bsfF_{ij}\upLn = - \bsfF_{ji}\upLn$ since
$\bn_{ij} = -\bn_{ji}$. Let us mention in passing that while any
family of vectors of the form
$\bc_{ij} =\alpha \bn_{ij} |\Gamma_{ij}|$ satisfies the conservation
constraint \eqref{cijprop}, only the factor $\alpha = \frac{1}{2}$
leads to a consistent discretization of the divergence operator.

\subsection{Continuous finite elements}\label{secCG}
We describe in this section one possible implementation of the
abstract low-order scheme~\eqref{cijprop}--\eqref{def_dij_scheme} in
the context of continuous finite elements (cG). The set of the 
$d$-variate polynomials of degree at most $k\in\polN$ is denoted
$\polP_{k,d}$. The reader who is familiar with
\citep{GuePo2016,guermond_popov_second_order_2018,GuerNazPopTom2017}
is invited to move to \S\ref{secDG}.

\subsubsection{Technical preliminaries}\label{sec:CGpreliminaries} Let $\famTh$ be
a shape-regular sequence of matching meshes. To keep some level of
generality we assume that the elements in the mesh are generated from
a finite number of reference elements denoted
$\wK_1,\dots,\wK_\varpi$.  For example, the mesh $\calT_h$ could be
composed of a combination of triangles and parallelograms in dimension
two (we would have $\varpi=2$ in this case); it could also be composed
of a combination of tetrahedra, parallelepipeds, and triangular prisms
in dimension three (we would have $\varpi=3$ in this case). The
diffeomorphism mapping $\wK_r$ to an arbitrary element $K\in \calT_h$
is denoted $T_K : \wK_r \longrightarrow K$. We now introduce a set of
reference finite elements
$\{(\wK_r,\wP_r,\wSigma_r)\}_{1\le r\le \varpi}$ (the index
$r\in\intset {1}{\varpi}$ will be omitted in the rest of the paper to
simplify the notation), and we define the following scalar-valued and
vector-valued continuous finite element spaces:
\begin{align} 
\label{eq:Xh}
X_h &=\{ v\in \calC^0(\Dom;\Real)\st 
v_{|K}{\circ}T_K \in \wP,\ \forall K\in \calT_h\},\qquad 
\bX_h= [X_h]^m.
\end{align}  
The global shape functions are denoted by
$\{\varphi_i\}_{i \in \vertind}$ and we assume that they satisfy the partition of unity
property $\sum_{i \in \vertind}\varphi_i(\bx) =1$, for all
$\bx \in\Dom$.

\subsubsection{Definitions of $(\calV,\calE)$, $m_i$, and $\bc_{ij}$}
We define the connectivity graph $(\calV,\calE)$ by identifying the
shape functions $\{\varphi_i\}_{i\in\calV}$ with the vertices of the
graph. The edges are defined as follows: we say that two shape
functions (or two degrees of freedom) form an edge, \ie
$(i,j)\in\calE$, iff $\varphi_i\varphi_j \not\equiv 0$. For any
$i\in \calV$, the adjacency list $\calI(i)$ is defined by setting
$\calI(i):=\{j\in \calV\st (i,j)\in \calE\}$.

Let $\calM$ be the consistent mass matrix with entries
$\int_{\Dom} \varphi_i(\bx)\varphi_j(\bx)\diff \bx$, $i,j\in \calV$,
and let $\calM^L$ be the diagonal lumped mass matrix with entries
\begin{equation}
 \label{def_of_mi}
  m_i:= \int_{\Dom} \varphi_i(\bx)\diff \bx.
\end{equation}
The partition of unity property implies that
$m_i = \sum_{j\in \calI(i)} \int_\Dom \varphi_j(\bx)\varphi_i(\bx)\diff
\bx$,
\ie the entries of $\calM^L$ are obtained by summing the rows of
$\calM$. In the rest of the paper we assume that $m_i>0$, for all
$i \in \vertind$. This assumption is satisfied by many  families of finite elements.

Let $\bu_{h}^n = \sum_{j\in\calV} \bsfU_j^n \varphi_j \in \bX_h$ be
the approximation of $\bu$ at time $t^n$, where $\bX_h$ is the
continuous finite element space defined in \eqref{eq:Xh}.  We
  approximate $\polf(\bu_h^n)$ by
  $\sum_{j \in \vertind} \polf(\bsfU_j^n) \varphi_j$.  If $\wP$ is
  composed of Lagrange elements, then
  $\sum_{j \in \vertind} \polf(\bsfU_j^n) \varphi_j$ is the
  Lagrange interpolation of $\polf(\bu_h^n)$, and in this case the
  approximation is fully consistent with the polynomial degree of
  $\wP$; otherwise, the approximation is formally at least
  second-order accurate in space since it is exact if $\polf$ is
  linear. As a result, we have
\begin{align}\label{fluxinterp}
  \int_\Dom \DIV(\polf(\bu_h^n)) \varphi_i\diff \bx 
&\approx \sum_{j\in \calI(i)} \polf(\bsfU_j^n) \int_\Dom \varphi_i\GRAD \varphi_j\diff \bx
=\sum_{j\in \calI(i)} \polf(\bsfU_j^n)\bc_{ij},
\end{align} 
where the coefficients $\bc_{ij}\in \Real^d$ are defined by
\begin{equation}
\label{def_of_cij_CG}
\bc_{ij} = \int_\Dom \varphi_i \GRAD\varphi_j \diff \bx, \quad \forall j \in \calI(i).
\end{equation}
Here we observe that the partition of unity property and definition
\eqref{def_of_cij_CG} imply that
$\sum_{j\in \calI(i)} \bc_{ij} 
= \sum_{j\in \calI(i)} \int_\Dom \varphi_i \GRAD\varphi_j \diff \bx
= \int_\Dom \varphi_i \GRAD\big(\sum_{j\in \calI(i)}\varphi_j\big) \diff \bx
=\bzero.$
On the other hand, the skew-symmetry property $\bc_{ij} = -\bc_{ji}$
follows using integration by parts if $\Dom$ is the $d$-torus (which
is the case for periodic boundary conditions) or if either $\varphi_i$
or $\varphi_j$ vanish at the boundary of $D$ (which is the case when
we solve the Cauchy problem).

\subsection{Discontinuous finite elements}\label{secDG}
We finally describe in this section one possible implementation of the
abstract low-order scheme~\eqref{cijprop}--\eqref{def_dij_scheme} in the context of
discontinuous  finite elements (dG). This section builds on
top of the definitions and notation already introduced in
\S\ref{sec:CGpreliminaries}. 

\subsubsection{Technical preliminaries}\label{sec:DGpreliminaries} 
Here we clarify/expand on the specific details related to
discontinuous spaces. We define scalar-valued and vector-valued
discontinuous finite element spaces as follows:
\begin{equation}
\label{eq:XhDG}
X_h =\{ v\in L^1(\Dom;\Real)\st 
v_{|K}{\circ}T_K \in \wP,\ \forall K\in \calT_h\},\quad
\bX_h:=[X_h]^m.
\end{equation} 
We denote by $\{\varphi_i\}_{i\in\calV}$ the collection of global shape
functions generated from the reference shape functions,
\ie $X_h = \vect\{\varphi_i\}_{i\in\calV}$. Each shape function has
support on one cell only.  We denote by $\calI(K)$ the set of indices
of the shape functions with support in $K$. Similarly, letting
$\partial K$ to be the boundary of the cell $K$, we denote by
$\calI(\partial K)$ the set of indices of the shape functions with
non-vanishing trace on $\partial K$:
\begin{align}\label{def:setpartialK}
\calI(K) := \big\{ i \in \vertind \st \varphi_{i|K} \not \equiv 0 \big\},\qquad 
  \calI(\partial K) := \big\{ i \in \vertind \st \varphi_{i|\partial K} \not \equiv 0 \big\}.
\end{align}
Note that $\calI(\partial K)$ not only includes indices of shape
functions with support in $\calI(K)$ but this set also includes indices
of shape functions that do not have support in $K$ (see
Figure~\ref{DGpatch} for additional geometrical insight). More
precisely $\calI(\partial K)$ is the union of two disjoint sets
$\calI(\partial K\upi)$ and $\calI(\partial K\upe)$
defined as
\begin{align}
\calI(\partial K\upi) 
&:= \big\{ i \in \calI(K) \ \big| \ \varphi_{i|\partial K} \not \equiv 0 \big\}, 
\qquad  \calI(\partial K\upe) 
:= \calI(\partial K)\backslash \calI(\partial K\upi).
\end{align}
Finally, we assume that the finite element spaces
are always constructed so that the sets of shape functions $\{\varphi_j\}_{j \in \calI(K)}$ 
form a partition of unity over $K$ and 
the shape functions
$\{\varphi_j\}_{j \in \calI(\partial K\upi)}$,
$\{\varphi_j\}_{j \in \calI(\partial K\upe)}$ form partitions
of unity over $\partial K$, \ie
\begin{align}
\label{boundary_partition_unity}
 \sum_{j \in \calI(K)} \varphi_{j|K} = 1, \qquad 
  \sum_{j \in \calI(\partial K\upi)} \varphi_{j|\partial K} = 1,
\ \text{ and  } \sum_{j \in \calI(\partial K\upe)} \varphi_{j|\partial K} = 1.
\end{align}

\begin{figure}[h]
\centering
\includegraphics[scale=0.3]{\FIGS/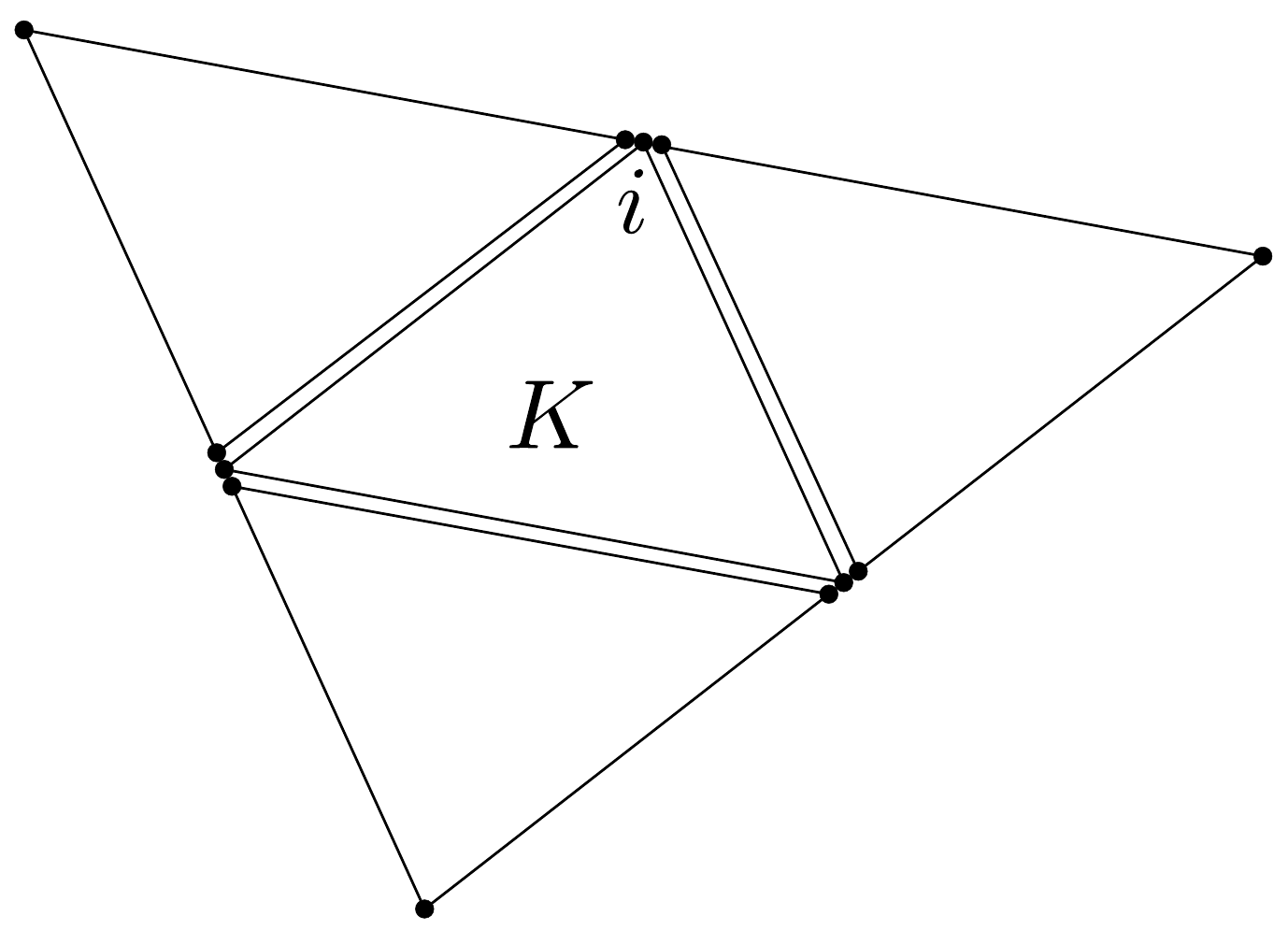}
\caption{Discontinuous $\polP_{1,2}$ finite element patch (exploded
  view). Each black dot represents a scalar shape function. In this
  picture $i \in \calI(K)$, $\textup{card}(\calI(i))= 7$,
  $\textup{card}(\calI(K)) = 3$, $\textup{card}(\calI(\partial K)) = 9$,
  $\textup{card}(\calI(\partial K^{\mathsf{i}}))= 3$,
  $\textup{card}(\calI(\partial K^{\mathsf{e}}))= 6$ and
  $\textup{card}(\calI(K)\backslash\calI(\partial K^{\mathsf{i}}) )=
  0$.}
\label{DGpatch}
\end{figure}

\subsubsection{Definitions of $(\calV,\calE)$, $m_i$, and $\bc_{ij}$}
We start by defining the undirected graph $(\calV,\calE)$. The
vertices are identified with the shape functions
$\{\varphi_i\}_{i\in\calV}$. Let $i\in\calV$ and let $K$ be the unique
cell containing the support of $\varphi_i$. For any $i,j\in\calV$, we
say that the pair $(i,j)$ is an edge of the connectivity graph, \ie
$(i,j)\in \calE$, iff either $j\in \calI(K)$ or $j\in \calI(\partial K\upe)$ and
$\varphi_i\varphi_j|_{\partial K}\not\equiv 0$.

The consistent mass matrix and the lumped mass matrix are defined as
in \S\ref{secCG}; in particular we set 
\begin{equation}
 \label{def_of_mi_dG}
  m_i:= \int_{\Dom} \varphi_i(\bx)\diff \bx.
\end{equation}

Let
$\bu_{h}^n= \sum_{j\in\calV} \bsfU_j^n \varphi_j \in \bX_h$
be the approximation of $\bu$ at time $t^n$, where $\bX_h$
is a discontinuous finite element space defined in \eqref{eq:XhDG}.
Let $K\in \calT_h$ and $i\in \calI(K)$.
The traditional heuristics for the derivation of dG schemes consists
of integrating by parts on each cell $K$ and introducing a numerical
flux $\widehat{\polf}$ on the boundary $\partial K$ as follows:
\begin{align}
\label{heuristics_flux}
\int_K \nabla\SCAL(\polf(\bu_h^n)) \varphi_i\diff \bx
\approx \int_K -\polf(\bu_h^n) \SCAL\GRAD\varphi_i\diff \bx 
+\int_{\partial K} \widehat{\polf}\bn_K \varphi_i \diff s.
\end{align}
Upon denoting by $\bu_h^{n,\mathsf{i}}$ the interior trace of $\bu_h^n$ on
$\partial K$ and $\bu_h^{n,\mathsf{e}}$ the exterior trace on
$\partial K$, it is common to define the numerical flux as follows:
\begin{equation}
\label{heuristics_flux_consistent_and_beta}
\widehat{\polf}\bn_K = \frac{1}{2}(\polf(\bu_h^{n,\mathsf{i}})+\polf(\bu_h^{n,\mathsf{e}}))\bn_K 
+ \alpha_{\partial K}^n (\bu_h^{n,\mathsf{i}}-\bu_h^{n,\mathsf{e}}),
\end{equation}
where $\alpha_{\partial K}^n>0$ is usually some ad-hoc wave speed.  The exact form of
$\alpha_{\partial K}^n$ is unimportant for the time being; the sole purpose of the term
$\alpha_{\partial K}^n (\bu_h^i-\bu_h^e)$ is to stabilize the algorithm. We are just going to
assume that this term introduces a first-order consistency error and that we are
perfectly allowed to introduce further modifications to the discrete
divergence operator \eqref{heuristics_flux} consistent with this
assumption.  Inserting \eqref{heuristics_flux_consistent_and_beta}
into \eqref{heuristics_flux} and integrating by parts, we obtain
\begin{multline}\label{contarg}
\int_K \nabla\SCAL(\polf(\bu_h)) \varphi_i\diff \bx \approx  
\int_K \DIV(\polf(\bu_h)) \varphi_i\diff \bx \\
+\int_{\partial K} \tfrac{1}{2}(\polf(\bu_h^\mathsf{e})-\polf(\bu_h^\mathsf{i}))\SCAL \bn_K \varphi_i \diff s 
+ \int_{\partial K} \alpha_{\partial K}^n (\bu_h^\mathsf{i} -\bu_h^\mathsf{e}) \varphi_i \diff s.
\end{multline}
We now consider an idea analogous to \eqref{fluxinterp} and we replace
$\polf(\bu_h)$ on the right-hand side of \eqref{contarg} by
$\sum_{j \in \vertind} \polf(\bsfU_j^n) \varphi_j$ (where
$\{\varphi_j\}_{j \in \vertind}$ are the shape functions of our
discontinuous finite element space) to get:
\begin{align}
\label{algedivh}
\begin{aligned}
&\int_K \nabla\SCAL(\polf(\bu_h)) \varphi_i\diff \bx \approx   
\sum_{j \in \calI(K)} \polf(\bsfU_j^n) \SCAL\bc_{ij}^K \\
& \ \ \ \ + \sum_{j \in \calI(\partial K^{\mathsf{e}})}  \polf(\bsfU_j^n) \SCAL \bc_{ij}^{\partial } 
- \sum_{j \in \calI(\partial K^{\mathsf{i}})} \polf(\bsfU_j^n) \SCAL \bc_{ij}^{\partial } 
+ \int_{\partial K} \alpha_{\partial K}^n (\bu_h^i -\bu_h^e) \varphi_i \diff s, 
\end{aligned}
\end{align}
with the notation
\begin{equation}
\label{def_of_cij_bulk_and_bnd}
\bc_{ij}^{K}:= \int_{K}\varphi_i \GRAD\varphi_j \diff \bx,\qquad
\bc_{ij}^{\partial } := \tfrac{1}{2} \int_{\partial K} \varphi_{j} \varphi_i \bn_K \diff s,
\end{equation}
The three summations in \eqref{algedivh} represent a consistent
discretization of the divergence operator. In order to condense these
three summations into a single one, and after noticing that $j$ can belong
to only one of three possible (disjoint) subsets:
$\calI(K)\backslash\calI(\partial K^{\mathsf{i}})$,
$\calI(\partial K^{\mathsf{i}})$ or $\calI(\partial K^{\mathsf{e}})$,
we define the vector $\bc_{ij}$ by setting:
\begin{equation}
\label{def:cijCG}
\bc_{ij} :=
 \begin{cases} 
\bc_{ij}^K                    &\text{if }j \in \calI(K)\backslash\calI(\partial K^{\mathsf{i}}), \\
(\bc_{ij}^K-\bc_{ij}^{\partial})&\text{if }j \in \calI(\partial K^{\mathsf{i}}), \\
\bc_{ij}^{\partial}            &\text{if }j \in \calI(\partial K^{\mathsf{e}}).
\end{cases}
\end{equation}
Therefore, \eqref{algedivh} can be rewritten as follows:
\begin{align}
\label{algedivh01}
\begin{aligned}
  \int_K \nabla\SCAL(\polf(\bu_h)) \varphi_i\diff \bx \approx
  \sum_{j\in \calI(i)} \polf(\bsfU_j^n)\SCAL \bc_{ij} + \int_{\partial
    K} \alpha_{\partial K}^n (\bu_h^i -\bu_h^e) \varphi_i \diff s.
\end{aligned}
\end{align}

\begin{lemma} The set of coefficients $\{\bc_{ij}\}_{j \in \calI(i)}$
  defined in \eqref{def:cijCG} satisfy the conservation properties
  \eqref{cijprop}.
\end{lemma}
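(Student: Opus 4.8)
The plan is to verify the two parts of \eqref{cijprop} --- the zero row-sum $\sum_{j\in\calI(i)}\bc_{ij}=\bzero$ and the skew-symmetry $\bc_{ij}=-\bc_{ji}$ --- one at a time, the only ingredients being integration by parts on a single cell together with the partition-of-unity identities \eqref{boundary_partition_unity}. For the row-sum, fix $i\in\calV$ and let $K$ be the unique cell containing the support of $\varphi_i$; split $\calI(i)$ into the three disjoint index sets appearing in \eqref{def:cijCG}. Since $\bc_{ij}^{\partial}=\tfrac12\int_{\partial K}\varphi_j\varphi_i\bn_K\diff s$ vanishes whenever $\varphi_{j|\partial K}\equiv 0$ --- which occurs for every $j\in\calI(K)\setminus\calI(\partial K^{\mathsf{i}})$ and, trivially, for every $j\in\calI(\partial K^{\mathsf{e}})$ not adjacent to $i$ --- the row-sum rearranges into $\sum_{j\in\calI(i)}\bc_{ij}=\sum_{j\in\calI(K)}\bc_{ij}^K-\sum_{j\in\calI(\partial K^{\mathsf{i}})}\bc_{ij}^{\partial}+\sum_{j\in\calI(\partial K^{\mathsf{e}})}\bc_{ij}^{\partial}$. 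The first sum is $\int_K\varphi_i\,\GRAD\big(\sum_{j\in\calI(K)}\varphi_j\big)\diff\bx=\bzero$ by the bulk partition of unity, and the last two sums combine into $\tfrac12\int_{\partial K}\varphi_i\big(\sum_{j\in\calI(\partial K^{\mathsf{e}})}\varphi_j-\sum_{j\in\calI(\partial K^{\mathsf{i}})}\varphi_j\big)\bn_K\diff s=\bzero$, since each of the two boundary sums equals $1$ on $\partial K$.

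For the skew-symmetry, distinguish whether the supports of $\varphi_i$ and $\varphi_j$ lie in the same cell. If both are contained in a cell $K$, then $i,j\in\calI(K)$, and because $\bc_{ij}^{\partial}=\bzero$ as soon as $\varphi_{j|\partial K}\equiv 0$, the first two branches of \eqref{def:cijCG} both reduce to the single formula $\bc_{ij}=\bc_{ij}^K-\bc_{ij}^{\partial}$ (and likewise $\bc_{ji}=\bc_{ji}^K-\bc_{ji}^{\partial}$). Integration by parts on $K$ yields $\bc_{ij}^K+\bc_{ji}^K=\int_{\partial K}\varphi_i\varphi_j\bn_K\diff s=2\bc_{ij}^{\partial}$, while $\bc_{ij}^{\partial}=\bc_{ji}^{\partial}$ because its integrand is symmetric in $i$ and $j$; hence $\bc_{ij}+\bc_{ji}=(\bc_{ij}^K+\bc_{ji}^K)-(\bc_{ij}^{\partial}+\bc_{ji}^{\partial})=\bzero$.

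If instead the support of $\varphi_i$ lies in $K$ and that of $\varphi_j$ in a different cell $K'$, then $j\notin\calI(K)$, so the definition of $\calE$ forces $j\in\calI(\partial K^{\mathsf{e}})$ with $\varphi_i\varphi_{j|\partial K}\not\equiv 0$, whence $\bc_{ij}=\bc_{ij}^{\partial}$; the same argument with $K$ and $K'$ interchanged gives $\bc_{ji}=\bc_{ji}^{\partial}$. Since the meshes are matching, $\varphi_i\varphi_j$ is supported on the common interface $\Gamma=\partial K\cap\partial K'$, on which $\bn_{K'}=-\bn_K$, so $\bc_{ij}=\tfrac12\int_{\Gamma}\varphi_i\varphi_j\bn_K\diff s=-\tfrac12\int_{\Gamma}\varphi_i\varphi_j\bn_{K'}\diff s=-\bc_{ji}$, which finishes the proof. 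I expect the only mildly delicate point to be the same-cell case, namely recognizing that the two interior branches of \eqref{def:cijCG} collapse to $\bc_{ij}=\bc_{ij}^K-\bc_{ij}^{\partial}$ there; once this is noted, the whole statement follows from integration by parts and \eqref{boundary_partition_unity}.
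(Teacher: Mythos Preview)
Your proof is correct and follows essentially the same approach as the paper's own argument: the row-sum is handled by the three-way partition of $\calI(i)$ together with the bulk and boundary partition-of-unity identities \eqref{boundary_partition_unity}, and the skew-symmetry is handled by distinguishing the same-cell case (integration by parts on $K$, noting that the first two branches of \eqref{def:cijCG} collapse to $\bc_{ij}=\bc_{ij}^K-\bc_{ij}^{\partial}$) from the different-cell case ($\bn_{K'}=-\bn_K$ on the shared interface). The only cosmetic difference is that the paper treats skew-symmetry first and the row-sum second, whereas you do the opposite.
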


\begin{proof} Let us start by proving the skew-symmetry property.
Notice that \eqref{def_of_cij_bulk_and_bnd} is equivalent to
\[
\bc_{ij} :=
 \begin{cases} 
\bc_{ij}^K-\bc_{ij}^{\partial}&\text{if }j \in \calI(K), \\
\bc_{ij}^{\partial}            &\text{if }j \in \calI(\partial K\upe).
\end{cases}
\]
Let $j\in \calI(K')$. Assume first that $K=K'$, then
$\bc_{ij} = \bc_{ij}^K-\bc_{ij}^{\partial}$.  An integration by parts
gives $\bc_{ij} = -\bc_{ji}^K + \bc_{ij}^{\partial}$, which implies
that $\bc_{ij} = - \bc_{ji}$ because $i\in \calI(K')$.  Assume now
that $K\ne K'$ but $i\in \calI(\partial {K'}\upe)$, then
$\bc_{ij} = \bc_{ij}^{\partial}$. But $\bn_{K} = -\bn_{K'}$, hence
$\bc_{ij}^{\partial} = - \bc_{ji}^{\partial}$, which means that
$\bc_{ij} = -\bc_{ji}$ because $i\in \calI(\partial {K'}\upe)$.
    
Let us now prove that $\sum_{j\in \calI(i)} \bc_{ij} = \bzero$.  Using
that
$\calI(K)\backslash\calI(\partial K\upi)$, $\calI(\partial K\upi)$,
$\calI(\partial K\upe)$ is a partition of $\calI(i)$ and definition \eqref{def:cijCG} we have that
\begin{align*}
\sum_{j\in \calI(i)} \bc_{ij} &= 
\sum_{j \in \calI(K)\backslash\calI(\partial K\upi)} \bc_{ij}
+ \sum_{j \in \calI(\partial K\upi)} \bc_{ij}
+ \sum_{j \in \calI(\partial K\upe)} \bc_{ij} \\
&= \sum_{j \in \calI(K)\backslash\calI(\partial K\upi)} \bc_{ij}^K
+ \sum_{j \in \calI(\partial K\upi)} (\bc_{ij}^K - \bc_{ij}^{\partial})
+ \sum_{j \in \calI(\partial K\upe)} \bc_{ij}^{\partial} \\
&= \sum_{j \in \calI(K)} \bc_{ij}^K
- \sum_{j \in \calI(\partial K\upi)} \bc_{ij}^{\partial}
+ \sum_{j \in \calI(\partial K\upe)} \bc_{ij}^{\partial}. 
\end{align*}
the partition of unity property on $K$ (see
\eqref{boundary_partition_unity}) implies that
$\sum_{j \in \calI(K)} \bc_{ij}^K = \boldsymbol{0}$. The partition of
unity property on $\partial K$ (see \eqref{boundary_partition_unity})
implies that
$\sum_{j \in \calI(\partial K\upi)} \bc_{ij}^{\partial} =
\int_{\partial K} \varphi_i \bn_K \diff s$
and
$\sum_{j \in \calI(\partial K\upe)} \bc_{ij}^{\partial} =
\int_{\partial K} \varphi_i \bn_K \diff s$;
hence, the last two summations cancel each other. This completes the proof.
\end{proof}

\subsection{Graph viscosity for dG}
It is important to notice at this stage, that the formulation of the
viscous fluxes
$\int_{\partial K} \alpha_{\partial K}^n (\bu_h^{n,\textup{i}}
-\bu_h^{n,\textup{e}}) \varphi_i \diff s$ in \eqref{algedivh01} is not
compatible with our pursuit of a purely algebraic formulation. Note that 
the dissipation in \eqref{algedivh01} is active only on $\partial K$ and
there is no dissipation in the bulk of $K$, at least when the
polynomial degree of the approximation is larger than or equal to
$1$.
More precisely, assume for the sake of simplicity that
$\alpha_{\partial K}^n$ is constant over $\partial K$. Let us asume
also that the shape functions are Lagrange-based and let
$\{\bx_i\}_{i\in\calV}$ be the Lagrange nodes associated with
$\{\varphi_i\}_{i\in\calV}$. Then using the quadrature generated by
the Lagrange nodes, one can legitimately approximate the integral
$\int_{\partial K} \alpha_{\partial K}^n (\bu_h^{n,\textup{i}}
-\bu_h^{n,\textup{e}} ) \varphi_i\diff s$ by
$m_i^\partial\alpha_{\partial K}^n (\bu_h^{n,\textup{i}} (\bx_i) -
\bu_h^{n,\textup{e}}(\bx_i))$, where
$m_i^\partial=\int_{\partial K} \varphi_i \diff s$.  This means that
if $i$ and $j$ are in $\calI(K)$ and $i\ne j$ (which we can assume
since the polynomial degree is at least $1$), then the ``stabilizing''
term
$\int_{\partial K} \alpha_{\partial K}^n (\bu_h^{n,\textup{i}}
-\bu_h^{n,\textup{e}} ) \varphi_i\diff s$ does not contain any term
proportional to
$\bu_h^{n,\textup{i}} (\bx_i) - \bu_h^{n,\textup{i}}(\bx_j)$. That is
to say, the traditional dG stabilization does not contain any
stabilizing mechanism between the degrees of freedom that are internal
to $K$. It is at this very point that we depart from the traditional
dG formulation: we replace
$\int_{\partial K} \alpha_{\partial K}^n(\bu_h^i -\bu_h^e) \varphi_i
\diff s$ by the graph Laplacian
$- \sum_{j\in \calI(i)} d_{ij}\upLn (\bsfU^n_j-\bsfU^n_i)$ which
accounts for any possible interactions inside $K$ and with the
exterior traces on $\partial K$.  Therefore, we finally replace
\eqref{algedivh01} by
\begin{align}
\label{algedivh_dg}
\begin{aligned}
  \int_K \nabla\SCAL(\polf(\bu_h)) \varphi_i\diff \bx \approx
  \sum_{j\in \calI(i)} \polf(\bsfU_j^n)\SCAL \bc_{ij} - \sum_{j\in
    \calI(i)} d_{ij}\upLn (\bsfU^n_j-\bsfU^n_i),
\end{aligned}
\end{align}
and, thus modified, the final dG scheme exactly matches the generic form of the abstract
scheme \eqref{def_dij_scheme}.

\section{Runge Kutta SSP time integration} \label{Sec:SSP_overview}
Increasing the time accuracy while keeping the invariant domain
property can be done by using so-called Strong Stability Preserving
(SSP) time discretization methods.  The key idea is to achieve
higher-order accuracy in time by making convex combinations of forward
Euler steps. More precisely each time step of a SSP method is
decomposed into substeps that are all forward Euler steps; the final
update is constructed as a convex combination of the intermediate
solutions. This section is meant to be a brief overview of SSP
methods; we refer the reader to
\citet{Ferracina_Spijker_2005,Higueras_2005,Gottlieb_Ketcheson_Shu_2009}
for more detailed reviews. The main result of this section is the
Shu-Osher Theorem~\ref{Th:Shu_Osher}. Our formulation of the
  result is slightly different from the original statement to
  emphasize that this result is only about convexity (\ie it does not
  involve any norm, seminorm, or convex functional). The reader familiar with
  this material is invited to move to
  \S\ref{Sec:High_order_methods}.

\subsection{SSPRK methods}
We are going to illustrate the SSP concept with explicit Runge Kutta
methods. Let us consider a finite-dimensional vector space $E$, a
subset $A\subset E$ and a (nonlinear) operator
$L:[0,T]\CROSS A\longrightarrow E$.  We are interested in
approximating in time the following problem
$\partial_t u +L(t,u)=0$ with appropriate initial condition.  We assume that this system of ordinary differential
equations makes sense
(for instance $L$ is continous w.r.t. $t$ and Lipschitz w.r.t. $u$).
We further assume that there exists a convex
subset $B\subset A$ and $\dt_{\max}>0$ such that
\begin{equation}
v+ \dt L(t,v)\subset B,\qquad \forall v\in B,\ \forall t\in [0,T],\  
\forall \dt \in [0,\dt_{\max}]. \label{Forward_Euler_assumption}
\end{equation}
Consider a general $s$ stages, explicit Runge--Kutta method
identified by its Butcher tableau composed of a matrix
$(a_{ij})_{\{1\le i,j\le s\}}\in \Real^{s\CROSS s}$ and a vector $(b_j)_{\{1\le j\le s\}}\in\Real^s$
\begin{equation}
\begin{array}{c|ccccc}
0                                               \\
c_2    & a_{21}                                  \\
c_3    & a_{31}  & a_{32}                         \\
\vdots & \vdots &        & \ddots &             \\
c_s    & a_{s1}  & a_{s2}  & \cdots & a_{s,s-1}     \\
\hline
       & b_1    & b_2    & \cdots & b_{s-1} & b_s
\end{array} \label{Butcher_tableau}
\end{equation}
where $c_i:=\sum_{j=1}^{i-1} a_{ij}$ for $i\in\intset{2}{s}$. Let us
also set $c_1:=0$.
Let us assume now that the above $s$ stages, explicit Runge Kutta
method has the following $(\alpha-\beta)$ representation: There are
real coefficients $\alpha_{ki}$, $\beta_{ki}$ with
$k\in\intset{0}{i-1}$ and $i\in\intset{1}{s}$ such that $u^{n+1}$
is obtained by first setting $w^{(0)} = u^n$, then computing
\begin{align} 
w^{(i)} &= \sum_{k=0}^{i-1} \alpha_{ik}w^{(k)} +
\beta_{ik}\dt L(t^n+\gamma_{k}\dt,w^{(k)}),\quad i\in\intset{1}{s},\label{Shu_Osher_Representation_kth}
\end{align}
and finally setting $u^{n+1} = w^{(s)}$, where
$\sum_{0\le k\le i-1} \alpha_{ik}=1$, $\gamma_k:=c_{k+1}$,
$\alpha_{ik}\ge 0$, and $\beta_{ik}\ge 0$, for all
$k\in\intset{0}{i-1}$ and all $i\in\intset{1}{s}$.  We further assume
that $\beta_{ik}=0$ if $\alpha_{ik}=0$, $k\in\intset{0}{i-1}$,
$i\in\intset{1}{s}$.  Not every $s$ stages, explicit Runge Kutta
method admits an $(\alpha-\beta)$ representation. Any Runge Kutta
method that admits an $(\alpha-\beta)$ representation as defined above
is said to be SSP for a reason that will be stated in
Theorem~\ref{Th:Shu_Osher}.

\begin{example}[Midpoint rule]
  The midpoint rule, defined by the Butcher tableau
\begin{equation}
\begin{array}{c|ccccc}
 0                                               \\ 
 \frac12    & \frac12                   \\[2pt] \hline
  & 0  & 1                       
\end{array} \label{mid_point_Butcher_tableau}
\end{equation}
does not have a legitimate $(\alpha-\beta)$ representation, since it
would require that $\beta_{20}+\alpha_{21}=0$, which in turn would
imply that either $\beta_{20}<0$ or $\alpha_{21}<0$.  
\end{example}

\begin{example}[SSPRK(2,2)] Heun's method, which is 
a second-order Runge--Kutta technique composed of two stages, is
SSP. It has the following $(\alpha-\beta)$ tableau and can be implemented as follows:
\[
\begin{array}{|cc|cc|c|c|}
\hline
 \alpha         &          &  \beta    &             &\gamma& c_{\textup{os}} \\ \hline\hline
\ST    1        &           &   1         &             &   0        &    \\
\ST  \frac12&\frac12&   0        & \frac12 &   1        & 1 \\[2pt] \hline
\end{array}\qquad\qquad
\begin{aligned}
w^{(1)}&= u^n\! + \dt L(t^n,u^n), \\
w^{(2)}&= w^{(1)}\! + \dt L(t^{n+1},w^{(1)}),\\
u^{n+1}&= \tfrac12 u^n\! + \tfrac12 w^{(2)}. 
\end{aligned}
\]
\par \vspace{-1.2\baselineskip}
\end{example}

\begin{example}[SSPRK(3,3), SSPRK(4,3)] The following Runge--Kutta methods, which are
third-order and composed of three substeps and four substeps, respectively, are SSP:
\[
\begin{array}{|ccc|ccc|c|c|}
\hline
                    &\alpha &          &        & \beta    &            &\gamma& c_{\textup{os}} \\ \hline\hline
\ST    1        &           &           & 1     &             &            &0           &    \\
\ST  \frac34&\frac14&           & 0     & \frac14&            &1           & 1 \\
\ST  \frac13&  0       &\frac23& 0     &0           &\frac23&\frac12  &    \\[2pt] \hline
\end{array}
\qquad\qquad
\begin{array}{|cccc|cccc|c|c|}
\hline
                   &\alpha&          &  &            & \beta  &           &            &\gamma& c_{\textup{os}} \\ \hline\hline
\ST    1       &          &           &  &\frac12&            &           &            &0           &    \\
\ST    0       &1        &           &  &0          &\frac12&           &            &\frac12 & 2 \\
\ST \frac23&0        &\frac13&  &0          &0         &\frac16&            &1          &    \\ 
\ST    0       &0       &0          &1&0          &0         &0          &\frac12&\frac12&    \\[2pt] \hline
\end{array}
\]
For instance the SSPRK$(3,3)$ method can be implemented as follows:
\begin{align*}
w^{(1)} &= u^n + \dt L(t^n,u^n), 
&&z^{(1)} = w^{(1)} + \dt L(t^n+\dt,w^{(1)}), \\
w^{(2)} &= \frac34 u^n + \frac14 z^{(1)},\quad
&&z^{(2)} = w^{(2)} + \dt L(t^n+\tfrac12\dt,w^{(2)}), \\
u^{n+1} &= \frac13 u^n + \frac23 z^{(2)}.\tag*{\qed}
\end{align*}
\end{example}

\subsection{The key result}
We
henceforth denote
\begin{equation}
c_\textup{os}:=\inf_{\{\alpha_{ik}\not=0,\ 1\le k+1\le i\le s\}}\alpha_{ik}\beta_{ik}^{-1}.
\end{equation}
The following theorem is the main result of this section.
\begin{theorem}[Shu-Osher] \label{Th:Shu_Osher} Assume that the Runge Kutta method
with the Butcher tableau \eqref{Butcher_tableau} is SSP. Let $B\subset A$ be convex.
Let $u^n\in B$ and assume that $\dt \le c_\textup{os} \dt_{\max}$, then
$u^{n+1}\in B$.
\end{theorem}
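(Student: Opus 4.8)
The plan is to prove the theorem by induction on the stage index $i$, showing that every intermediate stage $w^{(i)}$ stays in $B$, so that in particular $u^{n+1}=w^{(s)}\in B$. The base case is immediate: $w^{(0)}=u^n\in B$ by hypothesis. For the inductive step, suppose $w^{(0)},\dots,w^{(i-1)}$ are all in $B$. The key algebraic manipulation is to rewrite each term $\alpha_{ik}w^{(k)}+\beta_{ik}\dt L(t^n+\gamma_k\dt,w^{(k)})$ appearing in \eqref{Shu_Osher_Representation_kth}, for those $k$ with $\alpha_{ik}\neq 0$, in the form
\begin{equation*}
\alpha_{ik}\Bigl(w^{(k)} + \tfrac{\beta_{ik}}{\alpha_{ik}}\dt\, L(t^n+\gamma_k\dt,w^{(k)})\Bigr),
\end{equation*}
i.e.\ as $\alpha_{ik}$ times a forward Euler step of size $\tfrac{\beta_{ik}}{\alpha_{ik}}\dt$ applied to $w^{(k)}\in B$. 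The terms with $\alpha_{ik}=0$ contribute nothing (by the assumption $\beta_{ik}=0$ whenever $\alpha_{ik}=0$), so $w^{(i)}$ is a sum of such scaled forward Euler steps with nonnegative weights $\alpha_{ik}$ that sum to $1$ over $k\in\intset{0}{i-1}$.

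Next I would invoke the forward-Euler assumption \eqref{Forward_Euler_assumption}. The effective step size in the $k$-th term is $\tfrac{\beta_{ik}}{\alpha_{ik}}\dt$. By the definition of $c_\textup{os}$ we have $\tfrac{\beta_{ik}}{\alpha_{ik}} \le c_\textup{os}^{-1}$ for every pair $(i,k)$ with $\alpha_{ik}\neq 0$, hence $\tfrac{\beta_{ik}}{\alpha_{ik}}\dt \le c_\textup{os}^{-1}\dt \le \dt_{\max}$ under the hypothesis $\dt\le c_\textup{os}\dt_{\max}$. Therefore \eqref{Forward_Euler_assumption} applies (with $v=w^{(k)}\in B$ and time $t=t^n+\gamma_k\dt\in[0,T]$, noting $\gamma_k=c_{k+1}\in[0,1]$), giving that each forward-Euler substep $w^{(k)}+\tfrac{\beta_{ik}}{\alpha_{ik}}\dt\,L(t^n+\gamma_k\dt,w^{(k)})$ lies in $B$. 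Finally, since $B$ is convex and $w^{(i)}$ is a convex combination (the $\alpha_{ik}\ge0$ summing to $1$) of points of $B$, we conclude $w^{(i)}\in B$, closing the induction.

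I do not expect a serious obstacle here; the only points requiring a little care are the bookkeeping of which terms vanish (handled by the standing assumption $\beta_{ik}=0$ when $\alpha_{ik}=0$, so the ratio $\beta_{ik}/\alpha_{ik}$ is only ever needed when $\alpha_{ik}\neq0$) and the observation that $\gamma_k\dt$ stays in $[0,T]$, which follows from $\gamma_k\in[0,1]$ and $\dt\le\dt_{\max}$ together with $T\ge\dt_{\max}$ (or, more simply, one restricts attention to $t^n$ with $t^n+\dt\le T$ as part of the time-marching setup). The essential content is entirely the rewriting into scaled forward-Euler steps plus convexity, which is exactly the ``only convexity matters'' point emphasized in the surrounding text.
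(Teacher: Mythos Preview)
Your proposal is correct and follows essentially the same approach as the paper: induction on the stage index, rewriting each nonzero term as $\alpha_{ik}$ times a forward Euler step of size $(\beta_{ik}/\alpha_{ik})\dt\le\dt_{\max}$, invoking \eqref{Forward_Euler_assumption}, and concluding by convexity of $B$. The paper's proof is nearly identical, introducing the notation $r_{ik}:=\beta_{ik}/\alpha_{ik}$ and $z^{(i,k)}:=w^{(k)}+r_{ik}\dt L(t^n+\gamma_k\dt,w^{(k)})$ for the intermediate forward Euler states; your additional remark about $t^n+\gamma_k\dt\in[0,T]$ is a detail the paper does not explicitly address.
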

\begin{proof} 
  Let $n\ge 0$ and assume that $u^{n}\in B$. Let $i\in\intset{1}{s}$
  and assume that $w^{(k)}\in B$ for all $k\in\intset{0}{i-1}$. Note
  that this assumption is satisfied for $i=1$ since
  $w^{(0)} = u^n\in B$. Consider the $k$th term in
  \eqref{Shu_Osher_Representation_kth}, $0\le k\le i-1$.  If
  $\alpha_{ik}=0$ then $\beta_{ik}=0$ by construction, and there is
  nothing to sum. Assume now that $\alpha_{ik}>0$. Let us denote
  $r_{ik}:=\beta_{ik}/\alpha_{ik}$ and
  $z^{(i,k)}:=w^{(k)} + r_{ik}\dt L(t^n+\gamma_{k}\dt,w^{(k)})$, then
  the condition $\dt \le c_\textup{os} \dt_{\max}$ implies that
  $r_{ik}\dt \le (\beta_{ik}/\alpha_{ik}) c_\textup{os} \dt_{\max} \le
  \dt_{\max} $,
  which, owing to \eqref{Forward_Euler_assumption}, is sufficient to
  ascertain that $z^{(i,k)} \in B$ for all $k\in\intset{0}{i-1}$.
  Observing that $w^{(i)}=\sum_{k=1}^{i-1} \alpha_{ki} z^{(i,k)} $,
  the condition $\sum_{0\le k\le i-1} \alpha_{ik}=1$ together with
  $0\le \alpha_{ik}$, $0\le k\le i-1$, implies that $w^{(i)}$ is a
  convex combination of $z^{(i,0)}, \ldots, z^{(i,i-1)}$; hence
  $w^{(i)}$ is in $B$ since $B$ is convex. In conclusion
  $w^{(k)}\in B$ for all $k\in\intset{0}{i}$ and all
  $i\in\intset{1}{s}$, thereby proving that $u^{n+1}= w^{(s)}\in B$.
\end{proof}

\begin{remark}[Literature] Theorem~\ref{Th:Shu_Osher} has been
  established in a slightly different form in
  \citet[Prop.~2.1]{Shu_Osher1988} not explicitly invoking
  convexity. Although our proof is very similar to that in
  \citep{Shu_Osher1988}, the statement of Theorem~\ref{Th:Shu_Osher}
  is slightly different since it only involves convexity; no norm or
  seminorm (as in \cite[p.~92]{Gottlieb_Shu_Tadmor_2001}), or convex
  functional (as in \citep[Eq.~(1.3)]{Gottlieb_Ketcheson_Shu_2009}) is
  involved.  This variant of the theorem does not seem to be very
  well known.
\end{remark}

\begin{remark}[Structure of $B$]\label{Rem:Tensor_structure_of_B}
In the original paper \citep{Shu_Osher1988} and in
  \citep{Gottlieb_Shu_Tadmor_2001}, $E$ is a normed vector space
  equipped with some norm $\|\cdot\|_E$.  The
  assumption~\eqref{Forward_Euler_assumption} then consists of stating
  that $\polI+ \dt L(t,\cdot)$ maps any ball $B$ centered at $0$ into $B$ for any
  $s \in [0,\dt_{\max}]$ and any $t\in [0,T]$. In particular taking
  any $v\in E$ and defining $B$ to be the ball of radius $\|v\|_B$
  centered at $0$, the assumption~\eqref{Forward_Euler_assumption}
  amounts to saying that $\|v+ \dt L(t,v)\|_B \le \|v\|_B$, which is
  Eq.~(1.3) in \citep{Gottlieb_Shu_Tadmor_2001}. The norm that is used
  in \citep{Shu_Osher1988} is the total variation. In the present
  paper the assumption~\eqref{Forward_Euler_assumption} is more
  general. We are going to use it with the following structure: we are
  going to assume that there are two positive integers
  $\Nglob,m\in\polN{\setminus}\{0\}$ such that
  $E=(\Real^m)^\Nglob$. Here $\Real^m$ is called the phase space. Then
  we assume that there is convex subset of the phase space
  $\calB\subset \Real^m$ such that the
  assumption~\eqref{Forward_Euler_assumption} holds with
  $B:=(\calB)^\Nglob$. All the convex arguments invoked in the rest of
  the paper extends to SSP RK techniques with this particular
  structure.
\end{remark}

\section{High-order method} \label{Sec:High_order_methods} The
algorithm that we are going to develop in \S\ref{Sec:convex_limiting}
relies on the construction of the low-order invariant domain
preserving solution $\bsfU_i\upLnp$ described in
\S\ref{sec:abstract_low_order}-\S\ref{secInvVisco} and a high-order
solution $\bsfU_i\upHnp$ that possibly wanders outside the invariant
domain. We are then going to limit the high-order solution by pushing
it back into the invariant domain in the direction of the low-order
solution. This limiting technique, which we call \emph{convex
  limiting}, will be explained in \S\ref{Sec:convex_limiting}. The
purpose of the present section is to present various ways to construct
$\bsfU_i\upHnp$.

\subsection{Achieving high-order consistency}
In this section we describe in broad terms how high-order consistency can be achieved.
\subsubsection{Discretization-independent setting}
Independently of the space discretization that is used, we henceforth assume that
the high-order update $\bsfU_i\upHnp$ is computed as follows:
\begin{equation}
  \frac{m_i}{\dt}(\bsfU_i\upHnp -  \bsfU_i^{n})  + \sum_{j\in\calI(i)} \bsfF_{ij}\upHn=m_i \bS(\bsfU_i^n), 
  \label{abstract_high_order}
\end{equation}
where the high-order flux $\bsfF_{ij}\upHn$ is assumed to be
skew-symmetric; \ie $\bsfF_{ij}\upHn=-\bsfF_{ji}\upHn$ for all
$i\in\calV$, $j\in \calI(i)$ (under appropriate boundary conditions).
The skew-symmetry implies that the high-order update is conservative;
\ie
$\sum_{i\in\calV} m_i \bsfU_i\upHnp = \sum_{i\in\calV} m_i \bsfU_i^n$
if $\bS\equiv \bzero$.  The expression \eqref{abstract_high_order} is
the only information regarding the high-order update that will be
necessary for the convex limiting technique to be presented in
section~\S\ref{Sec:convex_limiting}.


There are many different techniques to compute high-order consistent
fluxes $\bsfF_{ij}\upHn$ which depend on the space discretization of
choice. For the sake of completeness, we list some of those in
\S\ref{sec:algefluxesFV}, \S\ref{sec:algefluxesCG}, and
\S\ref{sec:algefluxesDG}.  None of this material is essential to
understand the convex limiting technique explained in
\S\ref{Sec:convex_limiting}.


\subsubsection{High-order algebraic fluxes: Finite Volumes}\label{sec:algefluxesFV} In
the context of finite volume schemes, high-order algebraic fluxes
$\bsfF_{ij}\upHn$ are obtained as integrals of high-order numerical fluxes
over the interfaces between volumes, \ie
$\bsfF_{ij}\upHn := \int_{\Gamma_{ij}} \widehat{\polf}\bn_{ij} \diff s$ where
$\widehat{\polf}\bn_{ij}$ is some numerical flux. 
For instance, a widely popular choice of algebraic flux consists of setting:
\begin{align}\label{FijHighFV}
  \bsfF_{ij}\upHn := 
  \int_{\Gamma_{ij}} \left(\tfrac{1}{2} (\polf(\bu_h^{\high,n,\mathsf{i}}) 
  + \polf(\bu_h^{\high,n,\mathsf{e}}) ) \SCAL \bn_{ij} 
  + d_{ij}\upLn (\bu_h^{\high,n,\mathsf{i}} - \bu_h^{\high,n,\mathsf{e}}) \right)\diff s, 
\end{align}
where the superscripts $\mathsf{e}$ and $\mathsf{i}$ denote the
exterior and interior traces respectively, and
$\bu_h\upHn$ is a discontinuous piecewise
polynomial reconstruction (of degree at most $k$) recovered from the
piecewise constant solution
$\bu_h^{n} = \sum_{j\in\calV} \bsfU_j^{n} \mathbb{I}_{K_j}$
satisfying the conservation constraint
$\frac{1}{|K_i|}\int_{K_i} (\bu_h\upHn -\bu_h^{n})\diff \bx = 0$.
More precisely
$\bu_h^{\high,n,\mathsf{i}}(\bx) = \lim_{K_i\ni \by \to \bx}
\bu_h\upHn (\by)$ and
$\bu_h^{\high,n,\mathsf{e}}(\bx) = \lim_{K_j\ni \by \to \bx}
\bu_h\upHn (\by)$.  In practice, \eqref{FijHighFV} has to be
computed using quadrature on the faces of the element.  The choices of
numerical flux $\widehat{\polf} \bn_{ij}$ and reconstruction
$\bu_h\upHn$ that could be used in \eqref{FijHighFV} are not
unique. There is a massive body of literature on this topic and it is
well beyond the scope of the current paper to elaborate further in
this direction; we refer the reader to
\cite{Barth2004,Kroner1997,Morton2007} for additional background. For
the purpose of the present paper, we are only going to assume that
\eqref{abstract_high_order} holds with skew-symmetric algebraic fluxes
$\bsfF_{ij}\upHn$.

\subsubsection{High-order algebraic flux: Continuous Finite Elements}\label{sec:algefluxesCG} 
We now turn our attention to continuous finite elements. In
  this case high-order consistency can be achieved by using a degenerate
  graph viscosity $d_{ij}\upHn$  such that $d_{ij}\upHn \ll d_{ij}\upLn$ in smooth
regions while $d_{ij}\upHn \approx d_{ij}\upLn$ near shocks. Of course 
$d_{ij}\upHn$ must also satisfy the conservation constraints
\begin{align}
\label{dijHprop}
d_{ij}\upHn = d_{ji}\upHn\ge 0 \quad \text{if $i\not=j$,}  
\quad \text{and} \quad \sum_{j\in \calI(i)} d_{ij}\upHn =0.
\end{align} 
The algebraic flux looks as the one defined in
\eqref{defFijabstract} for the low-order method;
the only difference here is that we use the high-order
viscosities $\{d_{ij}\upHn\}_{j \in \calI(i)}$:
\begin{align}
\label{defFijCGlumped}
\bsfF_{ij}\upHn := ( \polf(\bsfU_j^n) + \polf(\bsfU_i^n) )\bc_{ij}
- d_{ij}\upHn (\bsfU^n_j - \bsfU^n_i).
\end{align}

Higher-order accuracy in space can also be obtained by using the
consistent mass matrix instead of the lumped mass matrix for the
discretization of the time derivative. By reducing dispersive errors,
this technique is known to yield superconvergence at the grid points;
see \cite{FLD:FLD719,Guermond_pasquetti_2013}. In this case the
high-order update is computed by solving the following mass matrix
problem:
\begin{align}
\label{CGconsHigh}
\sum_{j\in \calI(i)} \!\!\frac{m_{ij}}{\dt}(\bsfU_j\upHnp\!-\bsfU_j^n)
+ ( \polf(\bsfU_j^n) + \polf(\bsfU_i^n) )  \bc_{ij}
- d_{ij}\upHn (\bsfU^n_j - \bsfU^n_i) = m_i \bS(\bsfU_i^n).
\end{align}
Noticing that $m_{ij} = \delta_{ij} m_i + m_{ij} - \delta_{ij} m_i$,
we can rewrite \eqref{CGconsHigh} as
\begin{align}
\label{CGconsHigh01}
\begin{aligned}
\frac{m_i}{\dt} (\bsfU_i\upHnp-\bsfU_i^n) 
&+ \sum_{j\in \calI(i)} \frac{(m_{ij} - \delta_{ij} m_i)}{\dt}(\bsfU_j\upHnp-\bsfU_j^n) \\
&+ ( \polf(\bsfU_j^n) + \polf(\bsfU_i^n) )\bc_{ij}
- d_{ij}\upHn (\bsfU^n_j - \bsfU^n_i) = m_i \bS(\bsfU_i^n).
\end{aligned}
\end{align}
Since $\sum_{j\in \calI(i)} (m_{ij} - \delta_{ij} m_i) = 0$, we add
$- \sum_{j\in \calI(i)} \frac{(m_{ij} - \delta_{ij} m_i)}{\dt}
(\bsfU_i\upHnp-\bsfU_i^n) = 0$ to the identity \eqref{CGconsHigh01} to get
\begin{align}
\label{CGconsHigh02}
\begin{aligned}
  \frac{m_i}{\dt} (\bsfU_i\upHnp-\bsfU_i^n)
  &+ \sum_{j\in \calI(i)\backslash\{i\}} \frac{(m_{ij} 
- \delta_{ij} m_i)}{\dt}(\bsfU_j\upHnp-\bsfU_j^n  - \bsfU_i\upHnp + \bsfU_i^n ) \\
&+ ( \polf(\bsfU_j^n) + \polf(\bsfU_i^n) ) \bc_{ij}
- d_{ij}\upHn (\bsfU^n_j - \bsfU^n_i) = m_i \bS(\bsfU_i^n). 
\end{aligned}
\end{align}
Then \eqref{abstract_high_order} holds with the following definition
for the high-order algebraic flux:
\begin{align}\label{defFijCGconsistent}
\bsfF_{ij}\upHn :=&{} \frac{m_{ij}}{\dt}(\bsfU_j\upHnp-\bsfU_j^n  
- \bsfU_i\upHnp + \bsfU_i^n )  \\
 & + (\polf(\bsfU_j^n) + \polf(\bsfU_i^n) ) \bc_{ij} -
d_{ij}\upHn (\bsfU^n_j - \bsfU^n_i). \nonumber
\end{align}
In the context of finite difference methods, a scheme with the above structure 
is said to be linearly implicit as the numerical fluxes depend linearly 
on the state $\bsfU_j\upHnp$.

We finally mention a third approach which has antidispersive
properties that are similar to \eqref{CGconsHigh} but does not
require solving a mass matrix problem a each time step. This method
consists of approximating the inverse of $\calM$ by
$(\calM^L)^{-1}(\calI + (\calM^L-\calM)(\calM^L)^{-1})$, where $\calI$
is the identity matrix. We refer the reader to
\cite[\S3.3]{Guermond_Nazarov_Popov_Yong_2014} for the details.

\subsubsection{High-order algebraic flux: Discontinuous Finite Elements}
\label{sec:algefluxesDG} Just like for continuous finite
  elements, high-order consistency is space is obtained for
  discontinuous finite elements by replacing the low-order graph
  viscosity $d_{ij}\upLn$ by a high-order graph viscosity
  $d_{ij}\upHn$ satisfying the symmetry and positivity properties
  stated in~\eqref{dijHprop}. The corresponding flux in
  \eqref{abstract_high_order} is
\begin{align} \label{defFijdGlumped}
  \bsfF_{ij}\upHn := 
  (\polf(\bsfU_j^n) + \polf(\bsfU_i^n) ) \bc_{ij}
  - d_{ij}\upHn (\bsfU^n_j - \bsfU^n_i),\qquad  \forall \calI(K)\cup \calI(K\upe).
\end{align}

Like for continuous elements, superconvergence can be obtained by 
using the consistent mass matrix. A high-order discontinuous finite
element scheme using the consistent mass matrix can be written as
follows:
\begin{multline}
  \sum_{j\in \calI(K)}
  \frac{m_{ij}}{\dt}(\bsfU_j\upHnp-\bsfU_j^n)
  + \sum_{j\in \calI(i)} (\polf(\bsfU_j^n) + \polf(\bsfU_j^n)) \bc_{ij} \\
- d_{ij}\upHn (\bsfU^n_j - \bsfU^n_i) = m_i\bS(\bsfU_i^n).
\end{multline}
Notice that the mass matrix only involves the dofs in $\calI(K)$.  As
in the continuous case, noting that
$m_{ij} = \delta_{ij} m_i + m_{ij} - \delta_{ij} m_i$, using the
partition of unity properties, and proceeding as in
\eqref{CGconsHigh01}-\eqref{CGconsHigh02}), we obtain the following
definition for the high-order flux $\bsfF_{ij}\upHn$ that is used in
\eqref{abstract_high_order}:
\begin{align} \label{defFijdGconsistent}
\bsfF_{ij}\upHn := 
\begin{cases}
\dfrac{m_{ij}}{\dt}(\bsfU_j\upHnp-\bsfU_j^n  - \bsfU_i\upHnp + \bsfU_i^n ) & \\[2pt]
\qquad + (\polf(\bsfU_j^n) + \polf(\bsfU_i^n) )  \bc_{ij}
- d_{ij}\upHn (\bsfU^n_j - \bsfU^n_i) &\ \ \text{if }j \in \calI(K), \\[4pt]
(\polf(\bsfU_j^n) + \polf(\bsfU_i^n) ) \bc_{ij}
- d_{ij}\upHn (\bsfU^n_j - \bsfU^n_i) &\ \ \text{if }j \in \calI(K\upe).
\end{cases}
\end{align}

\subsection{Smoothness-based graph viscosity}\label{sec:SmoothVisc}

The objective of this section is to present a method where the
high-order graph viscosity in \eqref{defFijCGlumped},
  \eqref{defFijCGconsistent}, \eqref{defFijdGlumped}, or
  \eqref{defFijdGconsistent} is obtained by estimating the smoothness
of some functional (\eg an entropy) of the current solution.

\subsubsection{Principles of the method} \label{Sec:Principles_smoothness_based_viscosity}
Let $\bu_h^n = \sum_{i\in\calV} \bsfU_i^n \varphi_i$ be the current
approximation and let $g:\calA \to \Real$ be some functional (examples
will be given below). We define the
smoothness indicator associated to $g$ as follows:
\begin{equation}
  \alpha_i^n:= \frac{\left|\sum_{j\in\calI(i)} 
\beta_{ij}(g(\bsfU_j^n)-g(\bsfU_i^n))\right|}{\max(\sum_{j\in\calI(i)} 
|\beta_{ij}||g(\bsfU_j^n)-g(\bsfU_i^n)|,\epsilon_i)},
  \label{def_of_alpha_in}
\end{equation}
with $\epsilon_i = \epsilon \max_{j\in \calI(i)} |g(\bsfU_j^n)|$, where
  $\epsilon$ is very small number. This term avoids degeneracy when
  $g(\bsfU_j^n)$ is constant for all $j\in\calI(i)$; see
  Remark~\ref{Rem:epsilon_smoothness}.  The real numbers $\beta_{ij}$
are selected to make the method linearity-preserving (see
\cite{Berger:2005:ASL} for a review on linearity-preserving limiters
in the finite volume literature). The reader is referred to
Remark~\ref{Rem:linearity_preserving} for the details.  Notice that
$\alpha_i^n\in [0,1]$ for all $i\in\calV$ and $\alpha_i^n=1$ if
$g(\bsfU_i)$ is a local extremum. This property will play an important
role in the proof of Theorem~\ref{Thm:max_principle_smoothness} which
is the main result of \S\ref{sec:SmoothVisc}.

We now define the high-order graph viscosity by setting 
\begin{equation}
d_{ij}\upHn := d_{ij}\upLn \max(\psi(\alpha_i^n),\psi(\alpha_j^n)), \label{dij_smoothness_indicator}
\end{equation}
where $\psi\in\text{Lip}([0,1];[01,])$ is any Lipschitz function from
$[0,1]$ to $[0,1]$ such that $\psi(1)=1$. One typical example is
$\psi(\alpha)=\big(\frac{\alpha-\alpha_0}{1-\alpha_0}\big)^q$ with
$q\ge 2$ and $\alpha_0\in [0,1)$. For instance one can take
$\alpha_0=\frac12$ and $q=4$. One need to be careful though not to
take $\alpha_0$ too close to $1$ and $q$ not too large since we will
see in Theorem~\ref{Thm:max_principle_smoothness} below that the
Lipschitz constant of $\psi$ plays a important role in the properties
of the method.

\begin{remark}[Choices for $\epsilon$] \label{Rem:epsilon_smoothness}
  Using double precision arithmetic, the regularization in
  \eqref{def_of_alpha_in} can be done with
  $\epsilon=10^{-\frac{16}{2}}$. We have also observed that
    using $\epsilon = (m_i/|D|)^{\frac{3}{d}}$ maintains the
    second-order accuracy properties of the method in any $L^q$-norm,
    $q\in[1,\infty]$.
\end{remark}

\begin{remark}[Linearity-preserving
  $\beta_{ij}$] \label{Rem:linearity_preserving} To be linearity-preserving with
  continuous finite elements one should obtain $\alpha_i^n=0$ if
  $g(\bu_h^n)$ is linear on the support of the shape function
  $\varphi_i$. One simple choice for continuous finite elements
  consists of setting
  $\beta_{ij} =\int_\Dom \GRAD \varphi_i \SCAL\GRAD\varphi_j \diff x$
  (for the time being we do not require $\beta_{ij}>0$ in
  \eqref{def_of_alpha_in}).  For discontinuous elements, one could
  take
  $\beta_{ij} = \int_{K}\GRAD \varphi_j\SCAL\GRAD\varphi_i\diff x -
  \int_{\partial K} \frac12 \GRAD \varphi_j \SCAL \bn_K \varphi_i
  \diff x$,
  where $K$ is the unique cell such that $i\in \calI(K)$ and $\bn_K$
  is the unit normal vector on $\partial K$ pointing outward $K$.  For
  finite volumes, one should get $\alpha_i^n=0$ if a linear
  reconstruction fits all the data
  $\{g(\bsfU_j^n)\}_{j\in\calI(i)}$. For instance, one can use the
  mean-value coordinates; see \cite[Eq.~5.1]{Floater_2015} for the
  details.  Let us finally remark that although using $\beta_{ij}=1$
  is not a priori linearity preserving, we have numerically verified
  that this choice works reasonably well on quasi-uniform meshes.
\end{remark}

If the coefficients $\beta_{ij}$ are defined so the
linearity-preserving property holds, then the numerator of
\eqref{def_of_alpha_in} behaves like
$h^2\|D^2 g(\bu(\bxi,t^n))\|_{\ell^2(\Real^{d\CROSS d})}$ at some
point $\bxi$, whereas the denominator behaves like
$h\|\GRAD g(\bzeta)\|_{\ell^2(\Real^d)}$ at some point
$\bzeta$. Therefore, we have
$ \alpha_i^n \approx h \|D^2 g(\bxi)\|_{\ell^2(\Real^{d\CROSS
    d})}/\|\GRAD g(\bzeta)\|_{\ell^2(\Real^d)}$,
that is to say $\alpha_i^n$ is of order $h$ in the regions where $g$ is
smooth and does not have a local extremum. This argument shows that
$d_{ij}\upHn$ is one order smaller than $d_{ij}\upLn$ (in terms of
mesh size). Hence it is reasonable to expect that the method using
$d_{ij}\upHn$ is formally second-order accurate in space.

\begin{example}[Choosing $g(\bsfU)$]
  In the context of the shallow water equations one can use the water
  height as smoothness indicator.  For the compressible Euler
  equations one can use the density.  We are going to prove
  stability properties for these two choices in
  Theorem~\ref{Thm:max_principle_smoothness}, (see also
  Example~\ref{Ex:max_principle_smoothness}). In general it is a good
  idea to choose $g(\bsfU)$ to be entropy associated
  with~\eqref{def:hyperbolic_system} (with or without the source
  term). We refer the reader to \cite{GuerNazPopTom2017}, where a full
  set of tests is reported for the compressible Euler equations with the
  $\gamma$-law. The computations therein are done with 
  $g(\bsfU) = \frac{\rho}{\gamma-1} \log(e(\bsfU)\rho^{1-\gamma})$,
  where $e(\bsfU)$ is the specific internal energy
\end{example}

\subsubsection{Stability} \label{Sec:bounds_smoothness_based_viscosity} 
We now establish some invariant domain preserving properties
associated with the smoothness-based
graph viscosity~\eqref{def_of_alpha_in} when the coefficients $\beta_{ij}$ are positive.  
We further specialize the setting
by assuming that $g:\calA\to \Real$ is a projection onto one of the
scalar components of $\bsfU$. Without loss of generality we set
$g(\bsfU)=\sfU_1$ with the convention
$\bsfU:=(\sfU_1,\ldots,\sfU_m)\tr$.  From now on, we drop the index
$_1$ to simplify the notation; that is, we set $g(\bsfU)=\sfU$.  We
denote by $S:\calA \to \Real$ the corresponding scalar component of the source
$\bS$.  One important assumption in this section is that $S\equiv 0$,
\ie the scalar component of the source acting on $\sfU$ is zero.

We have seen in Theorem~\ref{Thm:UL_is_invariant} that the auxiliary
states $\overline{\bsfU}_{ij}^{n}$ defined in \eqref{def_barstates} 
play an important role in the stability analysis.
These states are such that if $\bsfU_i^n , \bsfU_j^n \in \calB$,
where $\calB\subset \calA$ is some
convex invariant set, then $\overline{\bsfU}_{ij}^{n}\in \calB$,
provided that $1 +  \frac{2 \dt d_{ii}\upLn}{m_i}\geq 0$,
and the low-order graph viscosity
$d_{ij}\upLn$ is defined as in \eqref{Def_of_dij}. We denote by
$\overline{\sfU}_{ij}^{n}$ the scalar component of
$\overline{\bsfU}_{ij}^{n}$ that is of interest to us. Then we set
\begin{equation}
\sfU_i^{\umax,n} := \max_{j\in \calI(i)} \overline{\sfU}_{ij}^n,\qquad
\sfU_i^{\umin,n} := \min_{j\in \calI(i)} \overline{\sfU}_{ij}^n. \label{def_of_min_max_of_bar_states}
\end{equation}
 We set
$\calI(i^+):=\{j\in\calI(i)\st \sfU_i^n< \sfU_j^n\}$ and
$\calI(i^-):=\{j\in\calI(i)\st \sfU_j^n<\sfU_i^n\}$.
To simplify the notation we set
\begin{equation}
\gamma_i^n := -\frac{2\dt d_{ii}\upLn}{m_i},\qquad
\gamma_i^{+,n} := \frac{2\dt}{m_i}\sum_{j\in\calI(i^+)} d_{ij}\upLn,\qquad 
\gamma_i^{-,n} := \frac{2\dt}{m_i}\sum_{j\in\calI(i^-)} d_{ij}\upLn.
\label{def_of_gammam_gammap}
\end{equation} 
The following key ``gap lemma'' will be invoked later. 
\begin{lemma}[Gap estimates] \label{Lem:lower_upper_bounds} Let
  $n\ge 0$, and $i\in\calV$. 
We define  the gap parameter
\begin{equation}
\theta_i^n := \frac{\sfU^n_i-\sfU^{\umin,n}_i}{\sfU_i^{\umax,n}-\sfU^{\umin,n}_i},
\ \text{if $\sfU_i^{\umax,n}-\sfU^{\umin,n}_i\ne 0$}; \qquad \qquad 
\theta_i^n := \frac12, \ \text{otherwise}. 
\label{def_f_theta_in}
\end{equation}
Assume that $\gamma_i^n <1$. Let $\bsfU_i^{n+1}$ be the high-order update
  given by \eqref{abstract_high_order} using either the high-order cG
  flux~\eqref{defFijCGlumped} or the high-order dG
  flux~\eqref{defFijdGlumped} with any graph viscosity $\{d_{ij}\upHn\}_{j\in\calI(i)\backslash\{i\}}$ defined 
by $d_{ij}\upHn := d_{ij}\upLn \max(\psi_i^n,\psi_j^n)$ with $\psi_i^n,\psi_j^n\in [0,1]$. Then,
\begin{align}
 \sfU_i^{n+1} 
 & \!\le \sfU_i^{\umax,n}\! - (\sfU_i^{\umax,n}\!-\sfU_i^n) \left((1-\theta_i^n)(1-\gamma_i^n) 
  - \theta_i^n(1-\psi_i^n)\tfrac12 \gamma_i^{-,n}\right)\!, \label{Eq:Lem:lower_upper_bounds:up}\\
\sfU_i^{n+1} 
& \!\ge \sfU_i^{\umin,n}\! + (\sfU_i^{\umax,n}\!-\sfU_i^n) \left(\theta_i^n(1-\gamma_i^n) 
  -(1- \theta_i^n)(1-\psi_i^n)\tfrac12 \gamma_i^{+,n}\right)\!. \label{Eq:Lem:lower_upper_bounds:down}
\end{align}
\end{lemma}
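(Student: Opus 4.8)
The plan is to write the scalar component of the high-order update as the (invariant-domain-preserving) low-order update plus a single signed ``viscous defect,'' estimate the two pieces, and reassemble. First I would project \eqref{abstract_high_order} onto the scalar component $\sfU$ (using $S\equiv 0$), use $\sum_{j\in\calI(i)}\bc_{ij}=\bzero$ together with the definition \eqref{def_barstates} of $\overline{\sfU}_{ij}^n$, and insert $d_{ij}\upHn=d_{ij}\upLn\psi_{ij}^n$ with $\psi_{ij}^n:=\max(\psi_i^n,\psi_j^n)$. Since the cG flux \eqref{defFijCGlumped} and the dG flux \eqref{defFijdGlumped} share the same algebraic form, this produces, with $a_{ij}:=2\dt d_{ij}\upLn/m_i$,
\begin{equation*}
\sfU_i^{n+1}=(1-\gamma_i^n)\sfU_i^n+\sum_{j\in\calI(i)\setminus\{i\}}a_{ij}\overline{\sfU}_{ij}^n
+\tfrac12\sum_{j\in\calI(i)\setminus\{i\}}a_{ij}(1-\psi_{ij}^n)(\sfU_i^n-\sfU_j^n),
\end{equation*}
whose first two terms are exactly the low-order update $\sfU_i^{L,n+1}$, so the last sum is the defect caused by replacing $d_{ij}\upLn$ by $d_{ij}\upHn$.

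Second, I would bound $\sfU_i^{L,n+1}$. The CFL hypothesis $\gamma_i^n<1$ makes $1-\gamma_i^n\ge0$, so $\sfU_i^{L,n+1}$ is a convex combination of $\sfU_i^n$ and the states $\overline{\sfU}_{ij}^n$, all of which lie in $[\sfU_i^{\umin,n},\sfU_i^{\umax,n}]$ by the very definition of $\sfU_i^{\umin,n},\sfU_i^{\umax,n}$ (recall $\overline{\sfU}_{ii}^n=\sfU_i^n$). Writing $\sfU_i^n=\theta_i^n\sfU_i^{\umax,n}+(1-\theta_i^n)\sfU_i^{\umin,n}$ and bounding $\overline{\sfU}_{ij}^n\le\sfU_i^{\umax,n}$ (resp.\ $\overline{\sfU}_{ij}^n\ge\sfU_i^{\umin,n}$) extracts the leading terms $-(1-\theta_i^n)(1-\gamma_i^n)(\sfU_i^{\umax,n}-\sfU_i^n)$ and $+\theta_i^n(1-\gamma_i^n)(\sfU_i^{\umax,n}-\sfU_i^n)$ of the two claimed inequalities.

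Third, I would control the defect. Split $\calI(i)\setminus\{i\}$ into $\calI(i^+)$, $\calI(i^-)$ and the set where $\sfU_j^n=\sfU_i^n$ (which contributes nothing). For the upper bound the $\calI(i^+)$-part of the defect has sign $\le0$ and is discarded; for $j\in\calI(i^-)$ one uses $1-\psi_{ij}^n\le1-\psi_i^n$ (because $\psi_{ij}^n\ge\psi_i^n$), which pulls out the factor $\tfrac12\gamma_i^{-,n}(1-\psi_i^n)$, and the remaining $\sfU_i^n-\sfU_j^n$ has to be absorbed by pairing each defect term with the corresponding auxiliary-state term $a_{ij}\overline{\sfU}_{ij}^n$ and exploiting the relation between $\overline{\sfU}_{ij}^n$ and the nodal values $\sfU_i^n,\sfU_j^n$ encoded in \eqref{def_barstates}. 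The lower bound \eqref{Eq:Lem:lower_upper_bounds:down} is obtained symmetrically, now discarding $\calI(i^-)$ and retaining $\calI(i^+)$, which is why $\gamma_i^{+,n}$ appears. Collecting the contributions and simplifying with $(\sfU_i^{\umax,n}-\sfU_i^n)=(1-\theta_i^n)(\sfU_i^{\umax,n}-\sfU_i^{\umin,n})$ gives the stated estimates.

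The hard part is this third step: the per-edge bookkeeping that makes the auxiliary-state terms, the low-order weights and the defect terms collapse precisely onto the compact expression in $\theta_i^n,\gamma_i^n,\gamma_i^{\pm,n},\psi_i^n$. One cannot estimate the $\overline{\sfU}_{ij}^n$ terms and the defect terms independently — a crude bound leaves an uncontrolled $\sfU_i^n-\sfU_j^n$ — so the inequalities must be assembled edge by edge, and this is exactly where the explicit form of the auxiliary states \eqref{def_barstates} is indispensable. By contrast the projection step, the convexity argument for $\sfU_i^{L,n+1}$, and the final algebraic simplification are routine.
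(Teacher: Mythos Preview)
Your decomposition into ``low-order update plus viscous defect'' and your treatment of the first two pieces (the convexity bound on $\sfU_i^{L,n+1}$ and the split of the defect into $\calI(i^\pm)$ using $\psi_{ij}^n\ge\psi_i^n$) match the paper's proof exactly. Where you diverge is the third step, and there you have misjudged the difficulty.

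You claim that ``one cannot estimate the $\overline{\sfU}_{ij}^n$ terms and the defect terms independently'' and that an edge-by-edge coupling exploiting the explicit formula \eqref{def_barstates} is ``indispensable.'' The paper does precisely what you say is impossible: it bounds the two contributions separately. After restricting the defect to $j\in\calI(i^-)$ (where $\sfU_i^n-\sfU_j^n>0$), the paper simply replaces $\sfU_i^n-\sfU_j^n$ by the larger quantity $\sfU_i^n-\sfU_i^{\umin,n}$, which equals $\theta_i^n(\sfU_i^{\umax,n}-\sfU_i^{\umin,n})$ by the definition of $\theta_i^n$. This crude replacement is exactly what produces the factor $\theta_i^n\cdot\tfrac12\gamma_i^{-,n}(1-\psi_i^n)$; combined with the independent bound $\sfU_i^{*,n}\le\sfU_i^{\umax,n}$ on the low-order part, the whole expression collapses to the stated inequality after rewriting both $\sfU_i^n-\sfU_i^{\umax,n}$ and $\sfU_i^n-\sfU_i^{\umin,n}$ in terms of $\theta_i^n$ and $\sfU_i^{\umax,n}-\sfU_i^{\umin,n}$. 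The lower bound is symmetric with $\calI(i^+)$ and $\sfU_i^{\umax,n}-\sfU_i^n$. No pairing of defect terms with auxiliary-state terms is performed, and the explicit flux formula in \eqref{def_barstates} is never invoked beyond the definition of the extrema $\sfU_i^{\umin,n},\sfU_i^{\umax,n}$. Your proposed ``edge-by-edge'' route would in any case be hard to carry out, since \eqref{def_barstates} contains the nonlinear flux difference $(\polf(\bsfU_j^n)-\polf(\bsfU_i^n))\bc_{ij}$, which gives no algebraic handle on $\sfU_i^n-\sfU_j^n$.
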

\begin{proof} There is nothing to prove if
  $\sfU_i^{\umax,n} -\sfU_i^{\umin,n}=0$. Let us now assume that
  $\sfU_i^{\umax,n} -\sfU_i^{\umin,n}\not =0$. Subtracting
  \eqref{def_dij_scheme} from \eqref{abstract_high_order} we obtain
\[
 \bsfU_i\upHnp=\bsfU_i\upLnp 
  + \frac{\dt}{m_i}\sum_{j\in \calI(i)} (d_{ij}\upHn  - d_{ij}\upLn)(\bsfU^n_j -\bsfU^n_i).
\]
Let us focus on the scalar component $\sfU_i^n$. Recalling the auxiliary
states $\overline\sfU_{ij}^{n}$ defined in \eqref{def_barstates} and
  recalling that we have assumed $S\equiv 0$,
  the identity \eqref{def_dij_scheme_convex_with_source} gives
  $\sfU_i\upLnp = (1 -\gamma_i)\sfU_i^n + \sum_{j\in
    \calI(i){\setminus}\{i\}}\frac{2\dt
    d_{ij}\upLn}{m_i}\overline\sfU_{ij}^{n}$.
  Then setting
  $\sfU_i^{*,n} := \frac{1}{\gamma_i^n}\sum_{j\in
    \calI(i){\setminus}\{i\}}\frac{2\dt
    d_{ij}\upLn}{m_i}\overline\sfU_{ij}^{n}$,
  we have
  $\sfU_i\upLnp = (1-\gamma_i^n)\sfU_i^n + \gamma_i^n \sfU_i^{*,n}$,
  and this in turn implies that
\begin{align*}
 \sfU_i\upHnp & = (1-\gamma_i^n)\sfU_i^n + \gamma_i^n \sfU_i^{*,n}
  + \frac{\dt}{m_i}\sum_{j\in \calI(i){\setminus}\{i\}} (d_{ij}\upHn  -
  d_{ij}\upLn)(\sfU^n_j -\sfU^n_i).
\end{align*}
\textup{(ii)} Using that
$\sfU_{i}^{*,n}\in
\text{conv}\{\overline\sfU_{ij}^{n}\}_{j\in\calI(i){\setminus}\{i\}}$,
we have $\sfU_{i}^{*,n} \le \sfU_i^{\umax,n}$, and we infer that
\begin{align*}
\sfU_i\upHnp& \le \sfU_i^{\umax,n} + (\sfU_i^n -\sfU_i^{\umax,n}) (1-\gamma_i^n) 
  + \frac{\dt}{m_i}\sum_{j\in \calI(i){\setminus}\{i\}} (d_{ij}\upHn  -
  d_{ij}\upLn)(\sfU^n_j -\sfU^n_i).
\end{align*}
Then using that $d_{ij}\upHn \le d_{ij}\upLn$, since $\max(\psi_i^n,\psi_j^n)\le 1$, the above
inequality gives
\begin{align*}
 \sfU_i\upHnp & \le \sfU_i^{\umax,n} + (\sfU_i^n -\sfU_i^{\umax,n}) (1-\gamma_i^n) 
  + \frac{\dt}{m_i}\sum_{j\in \calI(i^-)} (d_{ij}\upLn  -
  d_{ij}\upHn)(\sfU^n_i -\sfU^n_j) \\
&\le \sfU_i^{\umax,n} + (\sfU_i^n -\sfU_i^{\umax,n}) (1-\gamma_i^n) 
  + \frac{\dt}{m_i}\sum_{j\in \calI(i^-)} (d_{ij}\upLn  -
  d_{ij}\upHn)(\sfU^n_i -\sfU^{\umin,n}_i).
\end{align*}
Now using that $\sfU_i^{\umax,n}-\sfU^{\umin,n}_i\ne 0$ and that 
$\sfU^n_i $ is in the convex hull
of $\sfU_i^{\umax,n}$ and $\sfU^{\umin,n}_i$, we have
$\sfU^n_i = \theta_i^n \sfU_i^{\umax,n} + (1-\theta_i^n) \sfU^{\umin,n}_i$
where $\theta_i^n\in[0,1]$ has been defined in
\eqref{def_f_theta_in}. Hence,
$\sfU^n_i - \sfU^{\umin,n}_i= -\theta_i^n(\sfU^{\umin,n}_i-\sfU_i^{\umax,n})$ and
$\sfU^n_i-\sfU^{\umax,n}_i =(1-\theta_i^n)(\sfU^{\umin,n}_i-\sfU_i^{\umax,n})$.
With these definitions, the above inequality is rewritten as follows:
\begin{align*}
 \sfU_i\upHnp 
&\le \sfU_i^{\umax,n} + (\sfU_i^{\umin,n} -\sfU_i^{\umax,n}) \bigg((1-\theta_i^n)(1-\gamma_i^n) 
  - \theta_i^n\frac{\dt}{m_i}\sum_{j\in \calI(i^-)} (d_{ij}\upLn  -
  d_{ij}\upHn)\bigg).
\end{align*}
\textup{(iii)}
Using that $d_{ij}\upHn\ge d_{ij}\upLn \psi_i^n$ and $\psi_i^n\ge 0$,
we infer that $-d_{ij}\upHn\le -d_{ij}\upLn \psi_i^n$, which in turn
implies the following inequalities:
\begin{align*}
 \sfU_i\upHnp
&\le \sfU_i^{\umax,n} + (\sfU_i^{\umin,n} -\sfU_i^{\umax,n}) \bigg((1-\theta_i^n)(1-\gamma_i^n) 
  - \theta_i^n(1- \psi_i^n)\frac{\dt}{m_i}\sum_{j\in \calI(i^-)}
  d_{ij}\upLn  \bigg)\\
& \le \sfU_i^{\umax,n} + (\sfU_i^{\umin,n} -\sfU_i^{\umax,n}) \left((1-\theta_i^n)(1-\gamma_i^n) 
  - \theta_i^n(1-\psi_i^n)\tfrac12 \gamma_i^{-,n}\right).
\end{align*}
\textup{(iv)}
The other estimate is obtained similarly. More precisely, 
using that  $\sfU_{ij}^{*,n} \ge \sfU_i^{\umin,n}$, we infer that
\begin{align*}
\sfU_i\upHnp& \ge \sfU_i^{\umin,n} + (\sfU_i^{\umax,n} -\sfU_i^{\umin,n}) (1-\gamma_i^n) 
  + \frac{\dt}{m_i}\sum_{j\in \calI(i^+){\setminus}\{i\}} (d_{ij}\upHn  -
  d_{ij}\upLn)(\sfU^{\umax,n}_i -\sfU^n_i) \\
 & \ge \sfU_i^{\umin,n} + (\sfU_i^{\umax,n}-\sfU_i^{\umin,n}) \left( \theta_i^n(1-\gamma_i^n) 
  - (1-\psi_i^n)(1-\theta_i^n) \tfrac12 \gamma_i^{+,n} \right),
\end{align*}
which completes the proof.
\end{proof}

We now formulate the main result of this section.

\begin{theorem}\label{Thm:max_principle_smoothness} 
  Let $\psi\in \textup{Lip}([0,1];[0,1])$ be 
  such that $\psi(1)=1$ and with Lipschitz constant $k_\psi$.
Consider the scheme \eqref{abstract_high_order} using either
  the high-order cG flux~\eqref{defFijCGlumped} or the high-order dG
  flux~\eqref{defFijdGlumped} with the graph viscosity defined in
  \eqref{dij_smoothness_indicator}.  Assume that $g(\bsfU)=\sfU$ in
  \eqref{def_of_alpha_in}. Assume that all the coefficients
  $\beta_{ij}$ in \eqref{def_of_alpha_in} are positive and there
  exists $\varpi^\sharp\in(0,\infty)$ uniform with respect to the mesh
  sequence $\famTh$, such that
  $\max_{i\in\calV}(\max_{j\in\calI(i)}
  \beta_{ij}/\min_{j\in\calI(i)} \beta_{ij})\le \varpi^\sharp$. Let $i\in\calV$ and $n\ge 0$.
  Then, under the local CFL condition
  $\gamma_i^n \le \frac{1}{1+ k_\psi c_\sharp}$, where
  $c_\sharp = \varpi^\sharp \max_{i\in\calV}\textup{card}(\calI(i))$
  (this number is uniformly bounded with respect to the mesh
  sequence), the scheme is locally invariant domain preserving for the scalar component $\sfU$: \ie
  $\sfU_i\upHnp\in [\sfU^{\umin,n}_i,\sfU_i^{\umax,n}]$.
\end{theorem}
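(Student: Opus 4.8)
The plan is to read the conclusion off the gap estimates of Lemma~\ref{Lem:lower_upper_bounds}. Since $\overline{\bsfU}_{ii}^n = \bsfU_i^n$, one always has $\sfU_i^n \in [\sfU_i^{\umin,n},\sfU_i^{\umax,n}]$, so the case $\sfU_i^{\umax,n}=\sfU_i^{\umin,n}$ is trivial and we may assume $\sfU_i^{\umax,n}-\sfU_i^{\umin,n}>0$, in which case $\theta_i^n\in[0,1]$ is the barycentric coordinate of $\sfU_i^n$ in that interval. Writing $\psi_i^n:=\psi(\alpha_i^n)$ and using that the common prefactor $\sfU_i^{\umax,n}-\sfU_i^n$ in \eqref{Eq:Lem:lower_upper_bounds:up}--\eqref{Eq:Lem:lower_upper_bounds:down} is nonnegative, the result reduces to the two scalar inequalities
\[
(1-\theta_i^n)(1-\gamma_i^n)\ge \theta_i^n(1-\psi_i^n)\tfrac12\gamma_i^{-,n},\qquad
\theta_i^n(1-\gamma_i^n)\ge (1-\theta_i^n)(1-\psi_i^n)\tfrac12\gamma_i^{+,n}.
\]
I would establish both of these under the CFL bound $\gamma_i^n\le (1+k_\psi c_\sharp)^{-1}$, which I rewrite as $1-\gamma_i^n\ge k_\psi c_\sharp\,\gamma_i^n$.

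The elementary ingredients are: (i) $\gamma_i^{+,n}\le\gamma_i^n$ and $\gamma_i^{-,n}\le\gamma_i^n$, since $\calI(i^+),\calI(i^-)\subset\calI(i)\setminus\{i\}$ and $d_{ij}\upLn>0$; (ii) the Lipschitz bound $1-\psi_i^n=\psi(1)-\psi(\alpha_i^n)\le k_\psi(1-\alpha_i^n)$; and (iii) the algebraic form of the smoothness indicator when $g(\bsfU)=\sfU$ and all $\beta_{ij}>0$: putting $N_i^+:=\sum_{j\in\calI(i^+)}\beta_{ij}(\sfU_j^n-\sfU_i^n)\ge 0$ and $N_i^-:=\sum_{j\in\calI(i^-)}\beta_{ij}(\sfU_i^n-\sfU_j^n)\ge 0$, the numerator of \eqref{def_of_alpha_in} equals $|N_i^+-N_i^-|$ and the denominator equals $N_i^++N_i^-$ (the $\epsilon_i$ regularization is active only when all the $\sfU_j^n$ coincide, which makes the estimate easier), hence $1-\alpha_i^n=2\min(N_i^+,N_i^-)/(N_i^++N_i^-)$. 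In particular $\alpha_i^n=1$, and therefore $1-\psi_i^n=0$, as soon as $\calI(i^+)=\emptyset$ or $\calI(i^-)=\emptyset$, i.e. as soon as $\sfU_i^n$ is a local extremum of $\{\sfU_j^n\}_{j\in\calI(i)}$; in that degenerate configuration the two inequalities hold trivially.

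The core of the proof is the complementary regime, where both $\calI(i^+)$ and $\calI(i^-)$ are nonempty. Combining (i)--(ii), the right-hand side of the first inequality is at most $\tfrac12 k_\psi(1-\alpha_i^n)\gamma_i^n$, while its left-hand side is at least $(1-\theta_i^n)k_\psi c_\sharp\gamma_i^n$ by the CFL; so (and symmetrically for the second one) the whole matter reduces to the single estimate $1-\alpha_i^n\le 2c_\sharp\min(\theta_i^n,1-\theta_i^n)$, which says that the smoothness indicator must be small whenever $\sfU_i^n$ sits near an endpoint of $[\sfU_i^{\umin,n},\sfU_i^{\umax,n}]$. To obtain it I would use (iii) to write $1-\alpha_i^n$ in terms of $N_i^\pm$, then pass from the $\beta$-weighted one-sided sums $N_i^\pm$ to the nodal spread $\max_{j\in\calI(i^\pm)}|\sfU_j^n-\sfU_i^n|$ — this costs the stencil size $\max_i\textup{card}(\calI(i))$ (a sum of at most that many terms) and the condition number $\varpi^\sharp$ of the weights, whose product is exactly $c_\sharp$ — and finally from the nodal spread to the auxiliary-state gap $\sfU_i^{\umax,n}-\sfU_i^n$ (resp. $\sfU_i^n-\sfU_i^{\umin,n}$), invoking the explicit formula \eqref{def_barstates} for $\overline{\bsfU}_{ij}^n$, the bound $d_{ij}\upLn\ge\lambda_{\max}(\bn_{ij},\bsfU_i^n,\bsfU_j^n)\|\bc_{ij}\|_{\ell^2}$ from \eqref{Def_of_dij}, and the one-sided splitting $\calI(i^\pm)$.

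The step I expect to be the genuine obstacle is the last one. The smoothness indicator $\alpha_i^n$ is built from the \emph{nodal} states $\{\sfU_j^n\}_{j\in\calI(i)}$, whereas the gap parameter $\theta_i^n$ is built from the \emph{auxiliary} states $\{\overline\sfU_{ij}^n\}_{j\in\calI(i)}$; for a system these are not interchangeable, because the scalar component $\overline\sfU_{ij}^n$ of $\overline{\bsfU}_{ij}^n$ need not lie between $\sfU_i^n$ and $\sfU_j^n$. Bridging them forces one to unfold the definitions \eqref{def_barstates} and \eqref{Def_of_dij} and to exploit the invariant-set membership of the auxiliary states supplied by Lemma~\ref{Lem:InvarBar}, and it is precisely in order to keep the proportionality constant mesh-uniform — equal to $c_\sharp$ and not something that degenerates under refinement — that the hypothesis ``$\beta_{ij}>0$ with $\max_j\beta_{ij}/\min_j\beta_{ij}\le\varpi^\sharp$'' is needed. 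Everything else — the two symmetric cases according to whether $\theta_i^n$ is above or below $\tfrac12$, and the degenerate extremum case — is routine bookkeeping.
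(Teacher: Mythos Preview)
Your overall structure is exactly that of the paper: invoke Lemma~\ref{Lem:lower_upper_bounds}, dispose of the degenerate cases $\sfU_i^{\umax,n}=\sfU_i^{\umin,n}$ and $\theta_i^n\in\{0,1\}$, and reduce everything to the pair of estimates $1-\alpha_i^n\le 2c_\sharp(1-\theta_i^n)$ and $1-\alpha_i^n\le 2c_\sharp\theta_i^n$. Your ingredients (i)--(iii) and your algebraic reduction via the CFL rewriting $1-\gamma_i^n\ge k_\psi c_\sharp\gamma_i^n$ match the paper almost line for line.

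Where you diverge is in the handling of the step you call the ``genuine obstacle.'' The paper does \emph{not} unfold \eqref{def_barstates} or \eqref{Def_of_dij}, and it does not invoke Lemma~\ref{Lem:InvarBar}. Starting from $1-\alpha_i^n\le 2N_i^+/(N_i^++N_i^-)$, the paper simply bounds the numerator by replacing each $\sfU_j^n$ by $\sfU_i^{\umax,n}$ and bounds the denominator $\sum_{j\in\calI(i)}\beta_{ij}|\sfU_j^n-\sfU_i^n|$ from below by $\min_{j}\beta_{ij}\,(\sfU_i^{\umax,n}-\sfU_i^{\umin,n})$, then extracts the factor $\varpi^\sharp\,\textup{card}(\calI(i^+))$ to obtain $2c_\sharp(1-\theta_i^n)$ directly; the other inequality is stated as symmetric. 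In other words, the paper writes the nodal-to-auxiliary comparison as if it were immediate, and the machinery you propose to bridge nodal and auxiliary states is absent from the argument. Your observation that for a genuine system the scalar component $\overline{\sfU}_{ij}^n$ need not lie between $\sfU_i^n$ and $\sfU_j^n$---so that neither $\sfU_j^n\le\sfU_i^{\umax,n}$ for $j\in\calI(i^+)$ nor the denominator bound is self-evident---is a legitimate concern, but the paper's proof offers no further justification at this point than the bare chain of inequalities.
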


\begin{proof}
Note first that if $\sfU_i^{\umax,n}=\sfU^{\umin,n}_i$, then
$\sfU_i\upHnp = \sfU_i^{n}\in [\sfU^{\umin,n}_i,\sfU_i^{\umax,n}]$
irrespective of the value of $d_{ij}\upHn$, which proves the
statement. Let us assume now that $\sfU_i^{\umax,n}\ne \sfU^{\umin,n}_i$. If
$\theta_i^n=\frac{\sfU^n_i-\sfU^{\umin,n}_i}{\sfU_i^{\umax,n}-\sfU^{\umin,n}_i}
\in\{0,1\}$, then
either $\sfU^n_i=\sfU^{\umin,n}_i$ or $\sfU^n_i=\sfU^{\umax,n}_i$. In this
case, $\alpha_i^n=1$ and $\psi(\alpha_i^n) =1$; as a result,
$d_{ij}\upHn =d_{ij}\upLn\max(1,\psi(\alpha_j)) = d_{ij}\upLn$ for
all $j\in \calI(i)$, which implies that
$\sfU_i\upHnp = \sfU_i\upLnp\in [\sfU^{\umin,n}_i,\sfU_i^{\umax,n}]$. Finally,
let us assume that $0<\theta_i^n<1$.  Observing that
$||y|-|x||= \max(-|x|+|y|,|x|-|y|)$, we infer that
$-||y|-|x||\le |y|-|x|$ for all $x,y\in\Real$. This inequality in
turn implies that
\begin{align*}
1-\alpha_i^n 
&= 1- \frac{\left|\sum_{j\in\calI(i^+)}
  \beta_{ij}|\sfU_j^n-\sfU_i^n| - \!\sum_{j\in\calI(i^-)}
  \beta_{ij}|\sfU_j^n-\sfU_i^n|\right|}{\sum_{j\in\calI(i)} \beta_{ij}|\sfU_j^n-\sfU_i^n|}
  \\
&\le \frac{\sum_{j\in\calI(i)} \beta_{ij}|\sfU_j^n-\sfU_i^n| + \sum_{j\in\calI(i^+)}
  \beta_{ij}|\sfU_j^n-\sfU_i^n| - \sum_{j\in\calI(i^-)}
  \beta_{ij}|\sfU_j^n-\sfU_i^n|}{\sum_{j\in\calI(i)}\beta_{ij}|\sfU_j^n-\sfU_i^n|}\\
&\le 2 \frac{\sum_{j\in\calI(i^+)}
\beta_{ij} (\sfU_j^n-\sfU_i^n)}{\sum_{j\in\calI(i)} \beta_{ij}|\sfU_j^n-\sfU_i^n|} 
\le 2
  \frac{\sum_{j\in\calI(i^+)}\beta_{ij}(\sfU_j^{\umax,n}-\sfU_i^n)}{\min_{j\in\calI(i)} \beta_{ij}
(|\sfU_i^{\umax,n}-\sfU_i^n|+|\sfU_i^{\umin,n}-\sfU_i^n|)} \\
&\le 2 \frac{
  \sfU_i^{\umax,n}-\sfU_i^n}{\sfU_i^{\umax,n}-\sfU_i^{\umin,n}}
\frac{\max_{j\in\calI(i)} \beta_{ij}}{\min_{j\in\calI(i)} \beta_{ij}}
\text{card}(\calI(i^+)) \le 2 c_\sharp (1-\theta_i^n),
\end{align*}
where $c_\sharp =
\varpi^\sharp\max_{i\in\calV}\text{card}(\calI(i)) $ is a number
uniformly bounded with respect to the mesh sequence.  Likewise
we have
\[
1-\alpha_i^n \le 2c_\sharp \theta_i^n.
\]
Let $k_\psi$ be the Lipschitz constant of $\psi$. Then $1-\psi(\alpha_i^n)=
\psi(1)-\psi(\alpha_i^n) \le k_\psi (1-\alpha_i^n)$. This in turn implies
that
\begin{align*}
(1-\theta_i^n)(1-\gamma_i^n) 
  - \theta_i^n(1- \psi(\alpha_i^n))\tfrac12 \gamma_i^{-,n} &\ge 
(1-\theta_i^n)(1-\gamma_i^n) 
  -  k_\psi c_\sharp \theta_i^n(1- \theta_i^n)\gamma_i^{n} \\
&\ge (1-\theta_i^n)(1-(1+  k_\psi c_\sharp  \theta_i^n)\gamma_i^{n})\ge 0,
\end{align*}
provided $\gamma_i^{n} \le \frac{1}{1+ k_\psi c_\sharp}$.
Similarly, provided again that $\gamma_i^{n} \le \frac{1}{1+ k_\psi c_\sharp}$, 
we have
\begin{align*}
\theta_i^n(1-\gamma_i^n) 
  - (1-\theta_i^n)(1- \psi(\alpha_i^n))\tfrac12 \gamma_i^{+,n} &\ge 
\theta_i^n(1-\gamma_i^n) 
  -  k_\psi c_\sharp \theta_i^n(1- \theta_i^n)\gamma_i^{n} \\
&\ge \theta_i^n(1-(1+  k_\psi c_\sharp  (1-\theta_i^n))\gamma_i^{n})\ge 0,
\end{align*}
The conclusion follows from
Lemma~\ref{Lem:lower_upper_bounds}.
\end{proof}

\begin{example}[Shallow water/Euler equations] \label{Ex:max_principle_smoothness} 
  The above technique can be used to solve the Saint-Venant equations.
  In this case one can use the water height as smoothness indicator.
  This technique can also be used to solve the compressible Euler
  equations.  In this case one can use the density as smoothness
  indicator. Let us denote by $\sfU$ the scalar component that is
  chosen for the smoothness indicator. Then the scheme
  \eqref{abstract_high_order} using the high-order
  flux~\eqref{defFijCGlumped} or \eqref{defFijdGlumped} with the graph
  viscosity defined in \eqref{dij_smoothness_indicator} with
  $g(\bsfU)=\sfU$ satisfies the local maximum/minimum principle
  $\sfU_i\upHnp\in [\sfU^{\umin,n}_i,\sfU_i^{\umax,n}]$ for all
  $i\in\calV$ under the appropriate CFL condition. This means in
  particular that the water height (or the density) stays positive.
\end{example}

\begin{remark}[Literature]
  The origins of the smoothness-based viscosity can be found in
  \eg\cite[Eq. (12)]{Jameson_aiaa_1981}, see also the second formula in
  the right column of page~1490 in \cite{Jameson_aiaa_2015}.  A
  version of Theorem~\ref{Thm:max_principle_smoothness} for scalar
  conservation equations is proved
  in~\cite{guermond_popov_second_order_2018}. To the best of our
  knowledge, it seems that Theorem~\ref{Thm:max_principle_smoothness}
  as stated here for hyperbolic systems and generic discretizations is
  original.  The technique presented here shows similarities with that
  proposed in \citet[Thm.~4.1]{Burman_2007} and
  \citet[Eq. (2.4)-(2.5)]{Barrenechea_2016}. The quantity
  $(\alpha_i^n)^p$, $p\ge 2$, is used in \citep{Burman_2007} to construct
  a nonlinear viscosity that yields the maximum principle and
  convergence to the entropy solution for Burgers' equation in one
  dimension. It is used in \citep{Barrenechea_2016} for solving linear
  scalar advection--diffusion equations.
\end{remark}

\subsection{Greedy graph viscosity} \label{Sec:Greedy viscosity} We continue
with a technique entirely based on the observations made in
Lemma~\ref{Lem:lower_upper_bounds}, irrespective of any smoothness
considerations. 
As in \S\ref{Sec:bounds_smoothness_based_viscosity}, 
we specialize the setting by assuming that there is one scalar component of $\bsfU$,
say $\sfU$, for which the source term is zero, \ie  $S\equiv 0$.

Let $i\in\calV$ and $n\ge 0$. Let $\theta_{n}^n$,
$\gamma_i^{-,n}$, and $\gamma_i^{+,n}$ be the quantities defined in
\eqref{def_of_gammam_gammap}-\eqref{def_f_theta_in} for all
$i\in\calV$. 
We recall that Lemma~\ref{Lem:lower_upper_bounds} is quite general 
and just requires that $S(\bsfU)\equiv 0$ and $\psi_i^n,\psi_j^n\in [0,1]$. 
Let us set 
\begin{equation}
  \psi_i^n := \max\bigg(1-2(1-\gamma_i^n)\min\bigg(\frac{1}{\gamma_i^{-,n}}\frac{1-\theta_i^n}{\theta_i^n},
      \frac{1}{\gamma_i^{+,n}}\frac{\theta_i^n}{(1-\theta_i^n)}\bigg),0\bigg), \label{def_of_phin_greedy}
\end{equation}
if $\theta_i^n\not\in \{0,1\}$ and $\psi_i^n = 1$ otherwise. Then we set 
\begin{equation}
d_{ij}\upHn := d_{ij}^n\max(\psi_i^n,\psi_j^n), \qquad \forall i\in \calV, \ \forall j\in\calI(i){\setminus}\{i\}.
\label{dij_greedy}.
\end{equation}
We now formulate the main result of this section.

\begin{theorem}[Greedy graph viscosity]  \label{Thm:Greedy_viscosity}
  Consider the scheme \eqref{abstract_high_order} using either the
  high-order cG flux~\eqref{defFijCGlumped} or the high-order dG
  flux~\eqref{defFijdGlumped} with the graph viscosity defined in
  \eqref{dij_greedy} using the definitions
  \eqref{def_of_gammam_gammap}-\eqref{def_f_theta_in} with
  $\sfU^{\umin,n}_i$, $\sfU_i^{\umax,n}$ defined in
  \eqref{def_of_min_max_of_bar_states}.  Assume that
  $\gamma_i^n\le 1$, then the scheme is locally invariant domain
  preserving for the scalar component $\sfU$: \ie
  $\sfU_i\upHnp\in [\sfU^{\umin,n}_i,\sfU_i^{\umax,n}]$.
\end{theorem}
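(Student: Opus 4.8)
The plan is to deduce the statement directly from the gap estimates \eqref{Eq:Lem:lower_upper_bounds:up}--\eqref{Eq:Lem:lower_upper_bounds:down} of Lemma~\ref{Lem:lower_upper_bounds}: the greedy coefficient \eqref{def_of_phin_greedy} is manufactured precisely so that the two bracketed quantities appearing in those estimates are nonnegative. First I would record that \eqref{def_of_phin_greedy} does deliver an admissible coefficient: writing $\psi_i^n=\max(1-X_i^n,0)$ with $X_i^n:=2(1-\gamma_i^n)\min\!\big(\tfrac{1}{\gamma_i^{-,n}}\tfrac{1-\theta_i^n}{\theta_i^n},\tfrac{1}{\gamma_i^{+,n}}\tfrac{\theta_i^n}{1-\theta_i^n}\big)$, which is $\ge 0$ because $0<\gamma_i^n\le 1$, one gets $\psi_i^n\in[0,1]$ (and likewise $\psi_j^n\in[0,1]$), so that Lemma~\ref{Lem:lower_upper_bounds} applies with these coefficients.

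Next I would dispose of the boundary and degenerate cases, exactly as in the proof of Theorem~\ref{Thm:max_principle_smoothness}. If $\theta_i^n\in\{0,1\}$, or $\sfU_i^{\umax,n}=\sfU_i^{\umin,n}$, or $\gamma_i^n=1$, then the greedy rule returns $\psi_i^n=1$ (in the last case because $X_i^n=0$), hence $d_{ij}\upHn=d_{ij}\upLn\max(1,\psi_j^n)=d_{ij}\upLn$ for every $j\in\calI(i)$, so $\bsfU_i\upHnp=\bsfU_i\upLnp$. By the convex decomposition \eqref{def_dij_scheme_convex} of the low-order update (specialised to the component $\sfU$, for which $S\equiv 0$), $\sfU_i\upLnp$ is a convex combination of $\sfU_i^n$ and a point of $\textup{conv}\{\overline\sfU_{ij}^n\}_{j\in\calI(i)}$; since $0<\gamma_i^n\le 1$ this convex combination lies in $[\sfU_i^{\umin,n},\sfU_i^{\umax,n}]$, and the claim follows.

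It then remains to treat the generic case $0<\theta_i^n<1$, $\gamma_i^n<1$, $\sfU_i^{\umax,n}\ne\sfU_i^{\umin,n}$, where Lemma~\ref{Lem:lower_upper_bounds} gives \eqref{Eq:Lem:lower_upper_bounds:up}--\eqref{Eq:Lem:lower_upper_bounds:down}. From $\psi_i^n=\max(1-X_i^n,0)$ one has $1-\psi_i^n\le X_i^n$, and the first entry of the minimum in \eqref{def_of_phin_greedy} gives $1-\psi_i^n\le 2(1-\gamma_i^n)\tfrac{1}{\gamma_i^{-,n}}\tfrac{1-\theta_i^n}{\theta_i^n}$ whenever $\gamma_i^{-,n}>0$; multiplying by $\tfrac12\theta_i^n\gamma_i^{-,n}\ge 0$ yields $\theta_i^n(1-\psi_i^n)\tfrac12\gamma_i^{-,n}\le(1-\gamma_i^n)(1-\theta_i^n)$, so the bracket in \eqref{Eq:Lem:lower_upper_bounds:up} is $\ge 0$ (and if $\gamma_i^{-,n}=0$ that bracket equals $(1-\theta_i^n)(1-\gamma_i^n)\ge 0$ outright). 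The second entry of the minimum gives symmetrically $(1-\theta_i^n)(1-\psi_i^n)\tfrac12\gamma_i^{+,n}\le(1-\gamma_i^n)\theta_i^n$, so the bracket in \eqref{Eq:Lem:lower_upper_bounds:down} is $\ge 0$ as well. Since $\sfU_i^{\umax,n}-\sfU_i^n\ge 0$, \eqref{Eq:Lem:lower_upper_bounds:up} then forces $\sfU_i\upHnp\le\sfU_i^{\umax,n}$ and \eqref{Eq:Lem:lower_upper_bounds:down} forces $\sfU_i\upHnp\ge\sfU_i^{\umin,n}$, which is the asserted local invariance for $\sfU$.

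I do not anticipate a genuine difficulty: all of the analytical content is already packaged in Lemma~\ref{Lem:lower_upper_bounds}, and \eqref{def_of_phin_greedy} is reverse-engineered from it. The only care needed is bookkeeping --- tracking the two one-sided estimates separately, interpreting $\tfrac{1}{\gamma_i^{\pm,n}}$ as $+\infty$ (equivalently, reading the corresponding term of the minimum as inactive) when $\gamma_i^{\pm,n}=0$ so as never to divide by zero, and checking that the degenerate case $\sfU_i^{\umax,n}=\sfU_i^{\umin,n}$ is subsumed by the same reduction to the low-order bound that is used in Theorem~\ref{Thm:max_principle_smoothness}.
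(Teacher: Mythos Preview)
Your proposal is correct and follows essentially the same route as the paper: reduce the degenerate cases ($\sfU_i^{\umax,n}=\sfU_i^{\umin,n}$ or $\theta_i^n\in\{0,1\}$) to the low-order update via $\psi_i^n=1$, and in the generic case read off from \eqref{def_of_phin_greedy} the two inequalities $1-\psi_i^n\le 2(1-\gamma_i^n)\tfrac{1}{\gamma_i^{-,n}}\tfrac{1-\theta_i^n}{\theta_i^n}$ and $1-\psi_i^n\le 2(1-\gamma_i^n)\tfrac{1}{\gamma_i^{+,n}}\tfrac{\theta_i^n}{1-\theta_i^n}$, which are exactly what make the brackets in \eqref{Eq:Lem:lower_upper_bounds:up}--\eqref{Eq:Lem:lower_upper_bounds:down} nonnegative. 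Your additional bookkeeping (verifying $\psi_i^n\in[0,1]$, isolating $\gamma_i^n=1$ because Lemma~\ref{Lem:lower_upper_bounds} is stated for $\gamma_i^n<1$, and interpreting $1/\gamma_i^{\pm,n}=+\infty$ when the corresponding index set is empty) is a welcome tightening of the paper's argument rather than a departure from it.
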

\begin{proof}
  Note first that if $\sfU_i^{\umax,n}=\sfU^{\umin,n}_i$, then
  $\sfU_i^{n+1} = \sfU_i^{n}\in [\sfU^{\umin,n}_i,\sfU_i^{\umax,n}]$
  irrespective of the value of $d_{ij}^n$, which proves the statement.
  If $\theta_i^n\in\{0,1\}$, then $\psi_i^n =1$ implies that
  $d_{ij}^n =d_{ij}^{\textup{L},n}\max(1,\psi_j^n) = d_{ij}^{\textup{L},n}$ for all
  $j\in\calI(i){\setminus}\{i\}$, which again implies that
  $\sfU_i^{n+1} = \sfU_i^{n}\in [\sfU^{\umin,n}_i,\sfU_i^{\umax,n}]$. Finally,
  let us assume that $0<\theta_i^n<1$.  The definition of $\psi_i^n$
  in \eqref{def_of_phin_greedy} implies that
  $\psi_i^n \ge
  1-2\frac{1-\gamma_i^n}{\gamma_i^{-,n}}\frac{1-\theta_i^n}{\theta_i^n}$,
  which in turn gives
  $\theta_i^n(\psi_i^n-1) \frac12 \gamma_i^{-,n} +
  (1-\gamma_i^n)(1-\theta_i^n)\ge 0$.  This is the condition in
  Lemma~\ref{Lem:lower_upper_bounds} that shows that
  $\sfU_i^{n+1}\le \sfU_i^{\umax,n}$, see
  \eqref{Eq:Lem:lower_upper_bounds:up}. Similarly, we have
  $\psi_i^n \ge
  1-2\frac{1-\gamma_i^n}{\gamma_i^{+,n}}\frac{\theta_i^n}{1-\theta_i^n}$,
  which gives
  $(\psi_i^n-1)(1-\theta_i^n)\frac12\gamma_i^{+,n} +
  (1-\gamma_i^n)\theta_i^n \ge 0$.  This is the condition in
  Lemma~\ref{Lem:lower_upper_bounds} that shows that
  $\sfU_i^{\umin,n} \le \sfU_i^{n+1}$, see
  \eqref{Eq:Lem:lower_upper_bounds:down}.
\end{proof}
\begin{remark}[Small CFL number] 
  Note in \eqref{def_of_phin_greedy} that the quantity $\psi_i^n$ is
  almost equal to $1$ when $\sfU_i^n$ is not a local extremum and the
  local CFL number $\gamma_i^n$ is small. This shows that the method
  becomes greedier as the CFL number decreases; thereby the name of
  the method.
\end{remark}

\begin{remark}[Min-Max] 
The greedy graph viscosity based on~\eqref{def_of_phin_greedy} 
explicitly involves the bounds $\sfU_{i}^{\umin,n}$ and $\sfU_{i}^{\umax,n}$, 
whereas the smoothness-based graph viscosity using 
\eqref{def_of_alpha_in} does not.
\end{remark}

\subsection{Commutator-based graph viscosity}\label{sec:CommBased}
The objective of this section is to construct the high-order graph
viscosity so that the method is entropy consistent and close to be
invariant domain preserving. In other words, we do not want to rely on
the (yet to be explained) limiting process to enforce entropy
consistency. For instance one naive choice consists of using
$d_{ij}\upHn=0$, which gives the maximum accuracy for smooth
solutions, but as shown in Lemma~4.6 in
\cite{guermond_popov_second_order_2018} one can construct simple
counterexamples with Burgers' equation such that the resulting method
is maximum principle preserving, after limiting, but does not converge
to the entropy solution. A better option consists of estimating an
entropy residual/commutator as suggested in
\citep[\S5.1]{guermond_popov_second_order_2018},
\citep[\S3.4]{GuerNazPopTom2017},
\citep[\S6.1]{Guermond_Quezada_Popov_Kees_Farthing_2018}.

The key idea consists of measuring the smoothness of an entropy
  by measuring how well the chain rule is satisfied by the
  discretization at hand.  Given an entropy pair
  $(\eta(\bv),\bF(\bv))$ for \eqref{def:hyperbolic_system} we set
  $\eta_i^{\max,n}:=\max_{j\in\mathcal{I}(i)}\eta(\bsfU_j^n)$,
  $\eta_i^{\min,n}:=\min_{j\in\mathcal{I}(i)}\eta(\bsfU_j^n)$,
  $\epsilon_i = \epsilon\max_{j\in\mathcal{I}(i)}|\eta(\bsfU_j^n)|$
  and
  $\Delta\eta_i^n = \max(\frac12(\eta_i^{\max,n}-\eta_i^{\min,n}),
  \epsilon_i)$,
  then the so-called entropy viscosity, or commutator-based graph
  viscosity, is defined by setting
\begin{align}
N_i^n &:= \sum_{j\in\calI(i)} ( \bF(\bsfU_j^n) - (\eta'(\bsfU_i^n))\tr
\polf(\bsfU_j^n))\SCAL\bc_{ij}, \\
d_{ij}\upHn &:= \min(d_{ij}\upLn, \max(\frac{|N_i^n|}{\Delta\eta_i^n},\frac{|N_j^n|}{\Delta\eta_j^n})).
\label{EV}
\end{align}
The normalization in \eqref{EV} and the choice of entropy are not
unique; we refer the reader to \citep{GuerNazPopTom2017} where
relative entropies are used.

\section{Convex Limiting} \label{Sec:convex_limiting}
In this section we develop a general limiting framework to preserve
convex invariant sets and (more generally) quasiconcave constraints. This
work is aligned with the ideas presented in
\cite{Khobalatte_Perthame_1994}, \cite{Perthame_Youchun_1994,
  Perthame_Shu_1996} in the context of finite volume methods. 
We also refer the
reader to \cite{Zhang_Shu_2010,Zhang_Shu_2012,Jiang_Liu_2017} for
recent/related developments in the context of dG methods. The ideas
presented in this section are slightly more general as they naturally
extend beyond the Finite Volume/dG methods. The approach that we
propose is related to flux-limiting techniques like the
flux-corrected transport method by
\cite{Boris_books_JCP_1973,Zalesak_1979}.

\subsection{Quasiconcavity}
We have seen in \S\ref{sec:firstorder} that the low-order solution
$\bsfU_i\upLnp$ satisfies some ``convex bounds'' and, in
principle, we would like the high-order solution to satisfy these
``convex bounds'' as well. But, before proceeding any further, we need
to define clearly what we mean by convex bounds. We also need to
give a precise statement about the bounds that are naturally satisfied by
the first-order method. These are the two objectives of the present section and the next one \S\ref{sec:bounds}.

 In general, the convex bounds mentioned above can be described in terms of
upper  contour sets of quasiconcave functions and lower contour sets of
quasiconvex functions. For the sake of completeness we recall the
definitions of quasiconcavity and quasiconvexity.

\begin{definition}[Quasiconcavity]\label{def:quasiconv} Given a convex set
  $\calB \subset \Real^m$, we say that a function
  $\Psi:\calB \to \Real$ is quasiconcave if the set
  $L_\chi(\Psi) := \{\bsfU\in \calB \st \Psi(\bsfU) \ge \chi \}$
  is convex for any $\chi\in \Real$. The sets $L_\chi(\Psi)$ are called upper contour
  sets. 
\end{definition} 
We are going to make use of the following equivalent definition.
\begin{lemma}[Quasiconcavity] \label{lem:quasiconv_bis} Let
 $\calB \subset \Real^m$ be convex set. A function
  $\Psi:\calB \to \Real$ is quasiconcave iff for every finite set
  $\calS\subset\polN$, every corresponding set of convex coefficients
  $\{\lambda_j\}_{j\in\calS}$ (\ie $\sum_{j\in\calS} \lambda_j = 1$
  and $\lambda_j \geq 0$ for all $j \in \calS$), and every corresponding
   collection of vectors $\{\bsfU_j\}_{j\in\calS}$ in $\calB$, the following holds true:
\begin{align}\label{quasiconcineq}
\Psi\Big(\sum_{j \in \calS} \lambda_j \bsfU_j\Big) \geq \min_{j \in \calS} \Psi(\bsfU_j). 
\end{align}
\end{lemma}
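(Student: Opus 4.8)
The plan is to prove the two implications of the stated equivalence separately.

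\textbf{($\Leftarrow$)} Suppose the inequality \eqref{quasiconcineq} holds for every finite convex combination of points in $\calB$. I will show that each upper contour set $L_\chi(\Psi)$ is convex. Fix $\chi\in\Real$ and take $\bsfU_1,\bsfU_2\in L_\chi(\Psi)$ together with $\lambda\in[0,1]$. Applying \eqref{quasiconcineq} to the two-element set $\calS=\{1,2\}$ with convex coefficients $\lambda_1=\lambda$, $\lambda_2=1-\lambda$ gives
\begin{equation*}
\Psi(\lambda\bsfU_1+(1-\lambda)\bsfU_2)\ge\min(\Psi(\bsfU_1),\Psi(\bsfU_2))\ge\chi,
\end{equation*}
so $\lambda\bsfU_1+(1-\lambda)\bsfU_2\in L_\chi(\Psi)$. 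Hence $L_\chi(\Psi)$ is convex for every $\chi$, and $\Psi$ is quasiconcave by Definition~\ref{def:quasiconv}. (I also use here that $\calB$ is convex, so the convex combination lies in $\calB$.)

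\textbf{($\Rightarrow$)} Suppose $\Psi$ is quasiconcave, i.e.\ every $L_\chi(\Psi)$ is convex. Let $\calS\subset\polN$ be finite, let $\{\lambda_j\}_{j\in\calS}$ be convex coefficients, and let $\{\bsfU_j\}_{j\in\calS}\subset\calB$. Set $\chi:=\min_{j\in\calS}\Psi(\bsfU_j)$. Then by construction $\Psi(\bsfU_j)\ge\chi$ for every $j\in\calS$, so each $\bsfU_j$ lies in the set $L_\chi(\Psi)$, which is convex by hypothesis. A convex set is closed under arbitrary \emph{finite} convex combinations of its elements (this follows from the two-point definition by a straightforward induction on $|\calS|$: regroup $\sum_{j\in\calS}\lambda_j\bsfU_j$ as $\lambda_{j_0}\bsfU_{j_0}+(1-\lambda_{j_0})\bigl(\sum_{j\ne j_0}\tfrac{\lambda_j}{1-\lambda_j}\bsfU_j\bigr)$ after discarding any $j$ with $\lambda_j=0$ and handling the degenerate case $\lambda_{j_0}=1$ separately). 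Therefore $\sum_{j\in\calS}\lambda_j\bsfU_j\in L_\chi(\Psi)$, which by definition of $L_\chi$ means $\Psi\bigl(\sum_{j\in\calS}\lambda_j\bsfU_j\bigr)\ge\chi=\min_{j\in\calS}\Psi(\bsfU_j)$, which is \eqref{quasiconcineq}.

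Neither direction presents a genuine obstacle; the only point requiring a little care is the induction establishing that convexity of a set is equivalent to closure under all finite convex combinations (the Jensen-type step in the forward direction), and the bookkeeping of degenerate coefficients $\lambda_j\in\{0,1\}$ in that induction. Everything else is an immediate unwinding of the definitions, so I would state the induction in one sentence and otherwise keep the proof short.
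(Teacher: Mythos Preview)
Your proof is correct and entirely standard; the paper actually states this lemma without proof (it is presented as a well-known equivalent characterization, with a reference to \cite{Mordecai1988}), so there is nothing to compare against. One small typo: in your induction step the regrouping should read $\lambda_{j_0}\bsfU_{j_0}+(1-\lambda_{j_0})\sum_{j\ne j_0}\tfrac{\lambda_j}{1-\lambda_{j_0}}\bsfU_j$, i.e.\ the denominator is $1-\lambda_{j_0}$, not $1-\lambda_j$.
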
 

\begin{definition}[Quasiconvexity]\label{def:quasiconv_ter}
  A function $\Psi:\calB \to \Real$ is quasiconvex if $-\Psi$ is
  quasiconcave.
\end{definition} 

Note that Jensen's inequality implies that
concave/convex functions are quasiconcave/quasiconvex
(respectively). The reader is referred to \cite{Mordecai1988} for
further properties of quasiconcave/convex functions.  
We now give a result that is useful to prove that a function is quasiconcave.

\begin{lemma} \label{Lem:quasiconcavity} Let $\calB\subset \Real^m$ be
  a convex set.  Let $R:\calB\to (0,\Real)$ be a positive function.
  Let $\Psi:\calB\to \Real$ and assume that the product $R \Psi$ is
  concave. Then $\Psi$ is quasiconcave if one of the following two
  assumptions is satisfied:
(i) $R$ is affine or (ii) $R$ is convex and $\Psi$ is nonnegative.
\end{lemma}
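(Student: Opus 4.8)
The plan is to verify Definition~\ref{def:quasiconv} directly, i.e.\ to show that the upper contour set $L_\chi(\Psi)=\{\bsfU\in\calB\st\Psi(\bsfU)\ge\chi\}$ is convex for every $\chi\in\Real$. Fix $\chi$, pick $\bsfU,\bsfV\in L_\chi(\Psi)$ and $\lambda\in[0,1]$, and set $\bsfW:=\lambda\bsfU+(1-\lambda)\bsfV$, which lies in $\calB$ since $\calB$ is convex; the goal is to establish $\Psi(\bsfW)\ge\chi$. Two ingredients will be combined: first, concavity of $R\Psi$, which gives $R(\bsfW)\Psi(\bsfW)\ge\lambda R(\bsfU)\Psi(\bsfU)+(1-\lambda)R(\bsfV)\Psi(\bsfV)$; and second, strict positivity of $R(\bsfU)$, $R(\bsfV)$, $R(\bsfW)$, which lets us multiply the membership inequalities $\Psi(\bsfU)\ge\chi$ and $\Psi(\bsfV)\ge\chi$ by $R(\bsfU)$ and $R(\bsfV)$ respectively to obtain $R(\bsfU)\Psi(\bsfU)\ge\chi R(\bsfU)$ and $R(\bsfV)\Psi(\bsfV)\ge\chi R(\bsfV)$.

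Chaining these two facts yields $R(\bsfW)\Psi(\bsfW)\ge\chi\big(\lambda R(\bsfU)+(1-\lambda)R(\bsfV)\big)$. In case~(i), $R$ affine means the right-hand side equals $\chi R(\bsfW)$ exactly, so dividing by $R(\bsfW)>0$ gives $\Psi(\bsfW)\ge\chi$; note that this step is insensitive to the sign of $\chi$. In case~(ii), I would first observe that if $\chi<0$ then $L_\chi(\Psi)=\calB$ (because $\Psi\ge0$), which is convex, so one may assume $\chi\ge0$; then convexity of $R$ gives $\lambda R(\bsfU)+(1-\lambda)R(\bsfV)\ge R(\bsfW)$, and multiplying this inequality by the nonnegative number $\chi$ gives $\chi\big(\lambda R(\bsfU)+(1-\lambda)R(\bsfV)\big)\ge\chi R(\bsfW)$; combining with the previous display and dividing by $R(\bsfW)>0$ again gives $\Psi(\bsfW)\ge\chi$. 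In both cases $L_\chi(\Psi)$ is convex, hence $\Psi$ is quasiconcave.

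The one point that needs care — and the reason the two cases are stated separately — is the interaction between the sign of $\chi$ and the direction of the affineness/convexity relation for $R$: multiplying $\lambda R(\bsfU)+(1-\lambda)R(\bsfV)\ge R(\bsfW)$ by $\chi$ preserves the inequality only when $\chi\ge0$, which is precisely why case~(ii) invokes $\Psi\ge0$ (to discard the trivial contour sets with $\chi<0$), whereas case~(i) relies on an equality for affine $R$ and so needs no restriction on $\chi$. I expect no further obstacle: once the contour-set reformulation is in place the argument is a three-line chain of inequalities. An equivalent route would be to use the finite-convex-combination characterization of Lemma~\ref{lem:quasiconv_bis} in place of the binary contour-set test, but the version above is the most economical.
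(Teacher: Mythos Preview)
Your proof is correct and follows essentially the same approach as the paper. The paper packages the argument slightly differently---it works with the finite-combination characterization (Lemma~\ref{lem:quasiconv_bis}), sets $\chi:=\min_j\Psi(\bsfU_j)$, and observes that $\Phi:=R\Psi-\chi R$ is concave (because $-\chi R$ is concave under either hypothesis), then applies Jensen to $\Phi$---but unwinding that is exactly your chain of inequalities, and your handling of the sign of $\chi$ in case~(ii) matches the paper's implicit use of $\chi\ge 0$ when $\Psi\ge 0$.
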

\begin{proof}
  Let $\{\lambda_j\}_{j\in \calS}$ be a set of convex coefficients.
  Let $\{\bsfU_j^n\}_{j\in \calS}$ be members of $\calB$.  Let us set
  $\chi:=\min_{j\in\calS}\Psi(\bsfU_j)$.  Let
  $\Phi(\bsfU):= R(\bsfU) (\Psi(\bsfU) - \chi)$.  Notice that if $R$
  is affine, or if $R$ is convex and $\Psi$ is nonnegative,
  then $-\chi R(\bsfU)$ is concave.  As a result, $\Phi$ is concave
  since $R(\bsfU) \Psi(\bsfU)$ and $-\chi R(\bsfU)$ are both concave
  and the sum of two concave functions is concave (this may not be the
  case for the sum of quasiconcave functions).  Notice also that
  $\min_{j\in\calS} \Phi(\bsfU_j)\ge 0$ because $R\ge 0$ and
  $\min_{j\in\calS} \Psi(\bsfU_j)-\chi\ge 0$.  Hence
\begin{align*}
\Phi\Big(\sum_{j\in\calS} \lambda_j \bsfU_j\Big) = R\Big(\sum_{j\in\calS} \lambda_j \bsfU_j\Big)
\bigg(\Psi\Big(\sum_{j\in\calS} \lambda_j \bsfU_j\Big)-\chi\bigg)
&\ge 
\sum_{j\in\calS}\lambda_j  \Phi(\bsfU_j) \ge 0.
\end{align*}
This in turn implies that
$\Psi(\sum_{j\in\calS} \lambda_j \bsfU_j)\ge
\chi=\min_{j\in\calS}\Psi(\bsfU_j)$, which proves the assertion owing to Lemma~\ref{lem:quasiconv_bis}.
\end{proof}

\begin{example}[Entropy]\label{Example:density}
  Let $\eta:\calB\to \Real$ be any entropy for
  \eqref{def:hyperbolic_system} (recall that entropies are convex by
  definition), then $\Psi(\bsfU) = -\eta(\bsfU)$ is quasiconcave.
\end{example}

\begin{example}[Specific Entropy] \label{Example:specific_entropy}
  Let $\eta:\calB\to \Real$ be any entropy for
  \eqref{def:hyperbolic_system}. Let $R:\calB \to (0,\infty)$ be a
  positive linear function, then Lemma~\ref{Lem:quasiconcavity}
  implies that $\Psi(\bsfU) = -\eta(\bsfU)/R(\bsfU)$ is quasiconcave. One
  can think of this function as a specific entropy in the case of the
  shallow water equations ($R(\bsfU)$ is the water height), or the
  case of the Euler equations ($R(\bsfU)$ is the density),
\end{example}

Let us now give examples of quasiconcave functionals in the context of
the compressible Euler equations with an arbitrary equation of state.
The conserved variables in this case are $\bsfU:=(\rho,\bbm,E)\tr$.

\begin{example}[Density]
  We set $\calB := \mathbb{R}^{d+2}$, $\Psi(\bsfU) := \rho$.  The
  functional $\Psi:\calB\to \Real$ is linear, hence it is
  quasiconcave.  Note the following functional $\Psi(\bsfU) = -\rho$
  is also quasiconcave.
\end{example}

\begin{example}[Total energy]
  We set $\calB := \mathbb{R}^{d+2}$, $\Psi(\bsfU) := E$.  The
  functional $\Psi:\calB\to \Real$ is linear, hence it is
  quasiconcave.  Note the following functional $\Psi(\bsfU) = -E$ is
  also quasiconcave.
\end{example}
 
\begin{example}[Internal energy] \label{Ex:internal_energy} We set
  $\calB:=\{\bsfU =(\rho,\bbm,E)^\trans\in\Real^m \st \rho>0\}$ and introduce
  the internal energy
  $\varepsilon(\bsfU) := E - \frac{\bbm^2}{2\rho}$.  A direct
  computation shows that the functional $\varepsilon:\calB\to \Real$
  has a negative semi-definite Hessian for every equation of state,
  thereby proving that $\varepsilon$ is concave, hence quasiconcave.
\end{example}

Let us now illustrate the use of Lemma~\ref{Lem:quasiconcavity} with $R(\bsfU)=\rho$.

\begin{example}[Specific internal energy] Let
  $\calB:=\{\bsfU =(\rho,\bbm,E)^\trans\in \Real^m \st \rho>0\}$, and introduce
  the specific internal energy
  $e(\bsfU) := \frac{\varepsilon(\bsfU)}{\rho}= \frac{E}{\rho} -
  \frac{\bbm^2}{2\rho^2}$.
  Clearly $R(\bsfU):=\rho$ is convex; moreover,
  $\Phi(\bsfU):=R(\bsfU) e(\bsfU) =E - \frac{\bbm^2}{2\rho} =
  \varepsilon(\bsfU)$
  is the internal energy, which we know is a concave function for any
  equation of state. Hence we conclude from
  Lemma~\ref{Lem:quasiconcavity} that the specific internal energy is
  quasiconcave for any equation of state. Notice in passing that this
  argument proves that the set
  $\{\bsfU:=(\rho,\bbm,E)^\trans \st \rho\ge \rho_0 , e(\bsfU)\ge
  e_0\}$ is convex for any $\rho_0, e_0\in (0,\infty)$.
\end{example}

\begin{example}[Generalized specific entropies] We set
  $\calB:=\{\bsfU\in \Real^m \st \rho>0, e(\bsfU)>0\}$. Let
  $\eta:\calB \to \Real$ be a generalized entropy as defined in
  \cite[Eq.~(2.10a)]{Harten_1983},
  \cite[Thm.~2.1]{Harten_Lax_Levermore_Morojoff_1998}. Then using
  Lemma~\ref{Lem:quasiconcavity} with $R(\bsfU)=\rho$ and
  $\Psi(\bsfU)= \eta(\bsfU)/R(\bsfU)$, we conclude that the specific
  entropy $s(\bsfU):=\rho^{-1}\eta(\bsfU)$ is quasiconcave.  Note in
  passing that we have proved that the set
  $\{\bsfU:=(\rho,\bbm,E)^\trans \st \rho>\rho_0, e(\bsfU)>\rho_0,
  s(\bsfU)>s_0\}$
  is convex for any $\rho_0,e_0>0$ and any $s_0\in \Real$. We
    refer the reader to Theorem~8.2.2 from \cite{Serre_2000_II} for
    other properties of this set.
\end{example}

\begin{example}[Kinetic energy] \label{Example:kinetic_energy}
  We set $\calB:=\{\bsfU=(\rho,\bbm,E)^\trans\in \Real^m \st \rho>0\}$. Let
  $\Psi(\bsfU)=-\frac12 \rho^{-1} \bbm^2$ be the (negative) kinetic
  energy. It is clear that $\Phi(\bsfU)=-\frac12 \bbm^2$ is concave,
  then using Lemma~\ref{Lem:quasiconcavity} with $R(\bsfU)=\rho$, we
  conclude that the (negative) kinetic energy is quasiconcave.
\end{example}

We finish with a result that is useful to transform quasiconcave
functionals.
\begin{lemma}
\label{Lem:increasing_quasiconvave}
Let $\Psi: \calB \to \Real$ be a quasiconcave function. Let
$L:\Real\to \Real$ be a nondecreasing function, then $L\circ\Psi$ is
quasiconcave.
\end{lemma}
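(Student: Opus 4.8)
The plan is to use the characterization of quasiconcavity via upper contour sets from Definition~\ref{def:quasiconv}, together with the elementary observation that a nondecreasing function pulls back intervals of the form $[\chi,\infty)$ to intervals of the same form. First I would fix an arbitrary $\chi\in\Real$ and consider the upper contour set $L_\chi(L\circ\Psi)=\{\bsfU\in\calB\st L(\Psi(\bsfU))\ge\chi\}$; the goal is to show this set is convex. The key step is to rewrite this set purely in terms of $\Psi$: since $L$ is nondecreasing, the set $\{t\in\Real\st L(t)\ge\chi\}$ is an interval that is ``up-closed'' (if it contains $t$ it contains all $t'\ge t$), hence it is either empty, all of $\Real$, or of the form $[a,\infty)$ or $(a,\infty)$ for some real number $a$.

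In the first two cases $L_\chi(L\circ\Psi)$ is either empty or all of $\calB$, hence trivially convex. In the remaining cases one has $L_\chi(L\circ\Psi)=\{\bsfU\in\calB\st\Psi(\bsfU)\ge a\}=L_a(\Psi)$ when the interval is $[a,\infty)$, and $L_\chi(L\circ\Psi)=\{\bsfU\in\calB\st\Psi(\bsfU)>a\}$ when the interval is $(a,\infty)$. The first of these is convex directly by the quasiconcavity of $\Psi$ (Definition~\ref{def:quasiconv}). For the strict-inequality set I would note that $\{\bsfU\st\Psi(\bsfU)>a\}=\bigcup_{\epsilon>0}\{\bsfU\st\Psi(\bsfU)\ge a+\epsilon\}=\bigcup_{\epsilon>0}L_{a+\epsilon}(\Psi)$, a nested (increasing as $\epsilon\downarrow 0$) union of convex sets, which is therefore convex. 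Invoking Definition~\ref{def:quasiconv} once more, the convexity of every upper contour set $L_\chi(L\circ\Psi)$ establishes that $L\circ\Psi$ is quasiconcave.

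Alternatively, and perhaps more cleanly, I could argue directly from Lemma~\ref{lem:quasiconv_bis}: given convex coefficients $\{\lambda_j\}_{j\in\calS}$ and vectors $\{\bsfU_j\}_{j\in\calS}$ in $\calB$, quasiconcavity of $\Psi$ gives $\Psi(\sum_j\lambda_j\bsfU_j)\ge\min_j\Psi(\bsfU_j)$, and applying the nondecreasing map $L$ to both sides yields $L(\Psi(\sum_j\lambda_j\bsfU_j))\ge L(\min_j\Psi(\bsfU_j))=\min_j L(\Psi(\bsfU_j))$, where the last equality holds because $L$ nondecreasing commutes with taking the minimum over a finite set. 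This is exactly the condition in Lemma~\ref{lem:quasiconv_bis} for $L\circ\Psi$, so $L\circ\Psi$ is quasiconcave.

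There is no real obstacle here; the only mild subtlety, in the contour-set approach, is handling the strict-inequality interval $(a,\infty)$ in the range of $L$, which is dispatched by the nested-union argument above. The second approach sidesteps even that point, so I would likely present it as the main proof and relegate the contour-set viewpoint to a remark if needed.
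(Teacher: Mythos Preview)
Your proposal is correct, and your second approach via Lemma~\ref{lem:quasiconv_bis} is essentially identical to the paper's own proof: apply the nondecreasing $L$ to the inequality $\Psi(\sum_j\lambda_j\bsfU_j)\ge\min_j\Psi(\bsfU_j)$ and then use monotonicity to identify $L(\min_j\Psi(\bsfU_j))=\min_j L(\Psi(\bsfU_j))$. Your first contour-set argument is a valid alternative route not taken in the paper.
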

\begin{proof}
  Let us use the characterization \eqref{quasiconcineq}. Since $L$ is
  nondecreasing, we have
  $L\circ\Psi(\sum_{j\in\calS} \lambda_j\bsfU_j) \ge L
  (\min_{j\in\calS}\Psi(\bsfU_j))$. Let $k\in\calS$ be such that
  $\Psi(\bsfU_k) := \min_{j\in\calS} \Psi(\bsfU_j)$. Then, for any
  $j\in\calS$, we have $\Psi(\bsfU_k) \le \Psi(\bsfU_j)$, which
  implies that $L\circ \Psi(\bsfU_k) \le L\circ\Psi(\bsfU_j)$.  Hence
  $L (\min_{j\in\calS}\Psi(\bsfU_j)) = L(\Psi(\bsfU_k)) =
  \min_{j\in\calS} L(\Psi(\bsfU_j))$. In conclusion
  $L\circ\psi(\sum_{j\in\calS} \lambda_j\bsfU_j) \ge \min_{j\in \calS}
  L\circ\Psi(\bsfU_j)$, which proves the assertion.
\end{proof}

\begin{example}[Specific
  entropy] \label{Ex:change_of_variable_specific_entropy} Let us
  illustrate the use of Lemma~\ref{Lem:increasing_quasiconvave} with
  the compressible Euler equations, and, to simplify the argument, let 
  assume that the equation of state is the $\gamma$-law.  Consider the
  physical specific entropy
  $\Psi(\bsfU) =
  \frac{1}{\gamma-1}\log(\varepsilon(\bsfU)\rho^{-\gamma})$, where
  $\varepsilon(\bsfU)$ is the internal energy. This function is
  quasiconcave owing to Lemma~\ref{Lem:quasiconcavity} with
  $R(\bsfU)=\rho$, since $\rho \Psi(\bsfU)$ is known to be
  concave. Then using Lemma~\ref{Lem:increasing_quasiconvave} we
  conclude that $\tilde\Psi(\bsfU) = \varepsilon(\bsfU)\rho^{-\gamma}$
  is quasiconcave.
\end{example}

\subsection{Bounds} \label{sec:bounds} In this section we define the
bounds that we are going to use to limit the high-order solution.
The following result will play a key role in the rest of the paper,
since it tells us precisely what are the ``convex bounds'' that the
low-order solution produced by the GMS-GV scheme satisfies.

\begin{lemma}[Natural bounds on the GMS-GV scheme] 
  \label{Lem:NaturalBounds} Let $\calB\subset \calA \subset \Real^m$
  be a convex set and $\Psi: \calB \to \Real$ be a quasiconcave
  functional.  Let $n\ge 0$, $i\in\calV$, and assume that
  $1+4\dt \frac{d_{ii}\upLn}{m_i}\ge 0$ and $2\dt \le \tau_0$. Assume that 
$\bsfU_j^n\in \calB$ for all $j\in \calI(i)$. Let
  $\{\overline{\bsfU}_{ij}^{n}\}_{j\in \calI(i)}$ be the auxiliary
  states defined in \eqref{def_barstates}.  Consider the following
  quantity:
\begin{align}\label{def:convexBounds}
\Psi_i^{\min} :=\min(\Psi(\bsfU_i^n + 2\dt \bS(\bsfU_i^n)), 
\min_{j \in \calI(i)} \Psi(\overline{\bsfU}_{ij}^{n})).
\end{align}
Then, the first-order update $\bsfU_i\upLnp$ computed with the GMS-GV scheme 
(see \eqref{def_dij_scheme} plus \eqref{Def_of_dij}) is in $\calB$ and
satisfies the following inequality:
\begin{align}
\Psi(\bsfU_i\upLnp) \geq \Psi_i^{\min}.
\end{align}
\end{lemma}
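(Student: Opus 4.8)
The plan is to repackage the convex-combination identity used in the proof of Theorem~\ref{Thm:UL_is_invariant} and then to invoke the quasiconcavity characterization of Lemma~\ref{lem:quasiconv_bis}.

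First I would recall identity \eqref{def_dij_scheme_convex_with_source}. Using the convention $d_{ii}\upLn=-\sum_{j\in\calI(i)\setminus\{i\}}d_{ij}\upLn$ together with $\overline{\bsfU}_{ii}^{n}:=\bsfU_i^n$, and setting $\mu_{ii}:=1+4\dt\tfrac{d_{ii}\upLn}{m_i}$ and $\mu_{ij}:=\tfrac{4\dt d_{ij}\upLn}{m_i}$ for $j\in\calI(i)\setminus\{i\}$, identity \eqref{def_dij_scheme_convex_with_source} becomes
\[
\bsfU_i\upLnp=\sum_{j\in\calI(i)}\tfrac12\mu_{ij}\,\overline{\bsfU}_{ij}^{n}+\tfrac12\big(\bsfU_i^n+2\dt\bS(\bsfU_i^n)\big).
\]
By the CFL hypothesis $1+4\dt\tfrac{d_{ii}\upLn}{m_i}\ge0$ and the positivity of $d_{ij}\upLn$ for $j\ne i$ we have $\mu_{ij}\ge0$, and by construction $\sum_{j\in\calI(i)}\mu_{ij}=1$; hence $\{\tfrac12\mu_{ij}\}_{j\in\calI(i)}\cup\{\tfrac12\}$ is a set of convex coefficients and $\bsfU_i\upLnp$ is a convex combination of the states $\{\overline{\bsfU}_{ij}^{n}\}_{j\in\calI(i)}$ and $\bsfU_i^n+2\dt\bS(\bsfU_i^n)$.

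Next I would note that each of these states belongs to $\calB$: the auxiliary states $\overline{\bsfU}_{ij}^{n}$ by Lemma~\ref{Lem:InvarBar} (applicable since $d_{ij}\upLn$ is chosen as in \eqref{Def_of_dij} and $\bsfU_i^n,\bsfU_j^n\in\calB$) and $\bsfU_i^n+2\dt\bS(\bsfU_i^n)$ by Definition~\ref{Def:invariant_set} using $2\dt\le\tau_0$; in any case, membership of all these states in $\calB$ is already implicit in the right-hand side of \eqref{def:convexBounds} being well defined. Since $\calB$ is convex, this yields $\bsfU_i\upLnp\in\calB$ (this is the first assertion of the lemma, cf.\ Theorem~\ref{Thm:UL_is_invariant}).

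Finally, applying Lemma~\ref{lem:quasiconv_bis} to the convex combination displayed above, the quasiconcavity of $\Psi$ gives
\[
\Psi(\bsfU_i\upLnp)\ge\min\Big(\min_{j\in\calI(i)}\Psi(\overline{\bsfU}_{ij}^{n}),\ \Psi(\bsfU_i^n+2\dt\bS(\bsfU_i^n))\Big)=\Psi_i^{\min},
\]
the last equality being \eqref{def:convexBounds} (note that $i\in\calI(i)$ and $\overline{\bsfU}_{ii}^{n}=\bsfU_i^n$, so the term $j=i$ only contributes $\Psi(\bsfU_i^n)$ and does not alter the value of $\Psi_i^{\min}$). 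I do not anticipate a real obstacle: the argument is mere bookkeeping around \eqref{def_dij_scheme_convex_with_source}, and the only point requiring care is that the ``vertices'' of the convex combination coincide precisely with the states over which the minimum in \eqref{def:convexBounds} is taken, so that quasiconcavity returns exactly the bound $\Psi_i^{\min}$ without any loss.
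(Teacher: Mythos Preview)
Your proof is correct and follows essentially the same route as the paper: both arguments rewrite $\bsfU_i\upLnp$ via \eqref{def_dij_scheme_convex_with_source} as a convex combination of $\bsfU_i^n+2\dt\bS(\bsfU_i^n)$ and the auxiliary states $\{\overline{\bsfU}_{ij}^n\}_{j\in\calI(i)}$, then invoke quasiconcavity through \eqref{quasiconcineq}. Your version is in fact slightly more careful than the paper's terse proof, in that you make the convex coefficients explicit and address why the states entering the combination lie in $\calB$.
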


\begin{proof} Using the assumptions,
  $1+4\dt \frac{d_{ii}\upLn}{m_i}\ge 0$ and $2\dt \le \tau_0$, we
  first observe that \eqref{def_dij_scheme_convex_with_source} shows
  that $\bsfU_i\upLnp$ is a convex combination of the states
  $\bsfU_i^n + 2 \dt \bS(\bsfU_i^n)$ and
  $\{\overline\bsfU_{ij}^n\}_{j\in\calI(i){\setminus\{i\}}}$ which are
  all in $\calB$; hence $\bsfU_i\upLnp$ is in $\calB$. Then the
  conclusion follows readily by using the quasiconcavity property
  \eqref{quasiconcineq}.
\end{proof}

\begin{remark}[Quasiconcavity vs. quasiconvexity] 
  Since any quasiconvex function can be transformed into a
  quasiconvave function by a sign change, the above lemma gives
  $\Psi(\bsfU_i\upLnp) \leq \Psi_i^{\max}:= \max(\Psi(\bsfU_i^n +
  2\dt \bS(\bsfU_i^n)), \max_{j \in \calI(i)}
  \Psi(\overline{\bsfU}_{ij}^{n}))$ for any quasiconvex function
  $\Psi: \calB \subset \calA \to \Real$.  Therefore, in order to alleviate
  the language, we will henceforth refrain from mentioning quasiconvexity and
  will formulate every ``convex bounds'' in terms quasiconcave
  functionals only. 
\end{remark}

\begin{remark}[Invariant set vs. local bound]
  Notice that
  Lemma~\ref{Lem:NaturalBounds} contains two statements that are of
  different nature.  The first one is an invariant domain property:
  $(\bsfU_j^n\in \calB,\ \forall j\in \calI(i))\Rightarrow
  (\bsfU_i\upLnp\in\calB)$.
  Since $\calB$ does not depend on
  $i\in \calV$, this local assertion can be reformulated into a global
  statement
  $(\bsfU_i^n\in \calB,\ \forall i\in \calV)\Rightarrow
  (\bsfU_i\upLnp\in\calB, \ \forall i\in \calV)$.
  The second statement $\Psi(\bsfU_i\upLnp) \geq \Psi_i^{\min}$ is a
  local bound that can be viewed as a local ``generalized minimum
  principle.'' This bound cannot be made uniform; it is local in time
  and space, since $\Psi_i^{\min}$ depends on $i$ and $n$.
\end{remark}

\begin{remark}[Relaxation]
  The reader must be aware that in general the bound $\Psi_i^{\min}$
  defined in~\eqref{def:convexBounds} must be slightly relaxed in order to go
  beyond second-order accuracy in space in the $L^1$-norm. We refer the reader
  to \S\ref{Sec:relaxing_the_bounds} for implementation details on
  relaxation techniques.
\end{remark}

\subsection{Abstract Framework}\label{sec:ConvLimAbstract} 
In the sections \S\ref{sec:algefluxesFV}, \S\ref{sec:algefluxesCG} and
\S\ref{sec:algefluxesDG} we have seen that most high-order methods can
be written in the algebraic form
\begin{align}
\label{absconsH}
\frac{m_i}{\dt}(\bsfU_i\upHnp-\bsfU_i^n)
 + \sum_{j\in \calI(i)} \bsfF_{ij}\upHn = m_i\bS(\bsfU_i^n),
\end{align}
with $\bsfF_{ij}\upHn \in \Real^m$ satisfying the skew-symmetry
constraint $\bsfF_{ij}\upHn = -\bsfF_{ij}\upHn$ for all
$j\in \calI(i)$ (whether we use the consistent mass matrix for the
discretization for the time derivative or not), where the superscript
$^\high$ denotes high-order. Subtracting \eqref{abscons} from
\eqref{absconsH} and reorganizing we get
$m_i \bsfU_i\upHnp = m_i \bsfU_i\upLnp + \sum_{j \in
  \calI(i)\backslash\{i\}} \dt (\bsfF_{ij}\upLn - \bsfF_{ij}\upHn)$.
This expression can be rewritten into the following important identity:
\begin{align}\label{PreFCT}
m_i \bsfU_i\upHnp = m_i \bsfU_i\upLnp 
+ \sum_{j \in \calI(i)\backslash\{i\}} \bsfA_{ij}^n,
\end{align}
where
$\bsfA_{ij}^n := \dt (\bsfF_{ij}\upLn - \bsfF_{ij}\upHn) \in
\Real^m$.
The \emph{convex limiting} technique to be explained in the next
section relies heavily on~\eqref{PreFCT}.  Note that (by construction)
we have that $\bsfA_{ij}^n = -\bsfA_{ji}^n$, which means that
$\sum_{i\in\calV} m_i \bsfU_i\upHnp = \sum_{i\in\calV} m_i
\bsfU_i\upLnp$;
that is to say, the high-order and the low-order solution have the
same mass whether the source term $\bS$ is present or not.

\subsection{Convex limiting}
Without loss of generality, we consider a family of quasiconcave
functionals $\{\Psi_i \}_{i \in \calV}$, $\Psi_i: \calB \to \Real$
where $\calB \subset \Real^m$ is a convex set and
$\Psi_i(\bsfU_i\upLnp)\ge 0$ for each $i \in \calV$. Or goal is to
modify the high-order update so that the modified high-order update
satisfies the same quasiconcave constraints as the low-order solution
and has the same mass as the high-order update.

Taking inspiration from the flux-corrected transport methodology, we introduce symmetric
limiting parameters $\limiter_{ij} = \limiter_{ji} \in [0,1]$,
$i,j \in\calV$, and we define the limited solution $\bsfU_i^{n+1}$ as
follows:
\begin{align}\label{RegularFCT}
m_i \bsfU_i^{n+1} := m_i \bsfU_i\upLnp 
+ \sum_{j \in \calI(i)\backslash\{i\}} \limiter_{ij} \bsfA_{ij}^n.
\end{align}
Notice that $\bsfU_i^{n+1} = \bsfU_i\upLnp$ if
$\limiter_{ij} = 0$ for all $j \in\calI(i)\backslash\{i\}$ and
$\bsfU_i^{n+1} = \bsfU_i\upHnp$ if $\limiter_{ij} = 1$ for all
$j \in\calI(i) \backslash\{i\}$; hence, $\Psi_i(\bsfU_i^{n+1})\ge 0$ when
$\limiter_{ij} = 0$.  Our goal is to find a set of coefficients
$\ell_{ij}$ as close to $1$ as possible so that
$\Psi_i(\bsfU_i^{n+1})\ge 0$.  
\begin{lemma}[Conservation] The limiting process is conservative for any choice 
of coefficients $\ell_{ij}$ if $\ell_{ij} = \ell_{ji}$ for any $j\in \calI(i){\setminus}\{i\}$.  
\end{lemma}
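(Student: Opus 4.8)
The plan is to show that $\sum_{i\in\calV} m_i\bsfU_i^{n+1} = \sum_{i\in\calV} m_i\bsfU_i\upLnp$, which, combined with the identity $\sum_{i\in\calV} m_i\bsfU_i\upHnp = \sum_{i\in\calV} m_i\bsfU_i\upLnp$ noted right after \eqref{PreFCT}, shows that the limited update carries the same total mass as both the low-order and the high-order updates, \ie the limiting process is conservative. First I would sum the defining identity \eqref{RegularFCT} over all $i\in\calV$, which gives
\begin{align*}
\sum_{i\in\calV} m_i\bsfU_i^{n+1} = \sum_{i\in\calV} m_i\bsfU_i\upLnp + \sum_{i\in\calV}\sum_{j\in\calI(i)\backslash\{i\}} \limiter_{ij}\bsfA_{ij}^n ,
\end{align*}
so that the claim reduces to showing that the double sum on the right vanishes.

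For this I would use the standing assumption $j\in\calI(i)\Leftrightarrow i\in\calI(j)$: the ordered pairs $(i,j)$ with $i\ne j$ occurring in the double sum come in couples $\{(i,j),(j,i)\}$, and grouping the sum by couples, the contribution of each couple is $\limiter_{ij}\bsfA_{ij}^n + \limiter_{ji}\bsfA_{ji}^n$. Invoking the skew-symmetry $\bsfA_{ij}^n = -\bsfA_{ji}^n$ — which holds by construction, as recorded after \eqref{PreFCT}, because both $\bsfF_{ij}\upLn$ and $\bsfF_{ij}\upHn$ are skew-symmetric — together with the hypothesis $\limiter_{ij} = \limiter_{ji}$, the contribution of each couple is $\limiter_{ij}\bsfA_{ij}^n - \limiter_{ij}\bsfA_{ij}^n = \bzero$. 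Hence every couple cancels, the double sum is zero, and the asserted mass identity follows.

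I do not expect any genuine obstacle here: the statement is a purely algebraic bookkeeping argument resting on two facts already in hand, namely the skew-symmetry of the antidiffusive fluxes $\bsfA_{ij}^n$ and the symmetry of the limiter matrix $(\limiter_{ij})$. The only point deserving a little care is the reindexing of the double sum into couples, which uses the symmetry of the adjacency relation; and it is worth remarking that the argument is completely insensitive to the source term $\bS$, since $\bS$ enters \eqref{abscons} and \eqref{absconsH} identically and therefore does not appear in $\bsfA_{ij}^n := \dt(\bsfF_{ij}\upLn - \bsfF_{ij}\upHn)$.
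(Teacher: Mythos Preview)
Your argument is correct and matches the paper's proof exactly: both sum \eqref{RegularFCT} over $i\in\calV$ and observe that the double sum $\sum_{i\in\calV}\sum_{j\in\calI(i)\backslash\{i\}}\limiter_{ij}\bsfA_{ij}^n$ vanishes by the skew-symmetry of $\bsfA_{ij}^n$ and the symmetry of $\limiter_{ij}$. Your write-up is simply more explicit about the pairing step and adds the (correct but inessential) remark on the source term.
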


\begin{proof}
  the skew-symmetry of $\bsfA_{ij}^n$ together with the symmetry of
  the limiter $\limiter_{ij}$ implies that
  $\sum_{i\in\calV}\sum_{j \in \calI(i)\backslash\{i\}} \limiter_{ij} \bsfA_{ij}^n
  =\bzero$;
  therefore
  $\sum_{i \in \vertind}m_i \bsfU_i^{n+1} = \sum_{i \in
    \vertind}m_i\bsfU_i\upLnp$.
\end{proof}

The expression \eqref{RegularFCT} goes back to the flux-corrected transport framework
pioneered by \cite{Boris_books_JCP_1973,Zalesak_1979}. The reader can
further explore some current developments for flux-corrected transport methods in the books
\cite{KuzminLoehnerTurek2004, KuzminLoehnerTurek2012}.  At this point
we depart from the existing flux-corrected transport literature and follow \citep{GuerNazPopTom2017} instead.
We rewrite \eqref{RegularFCT} as follows:
\begin{align}
\label{convFCT}
\bsfU_i^{n+1} = \sum_{j \in \calI(i)\backslash\{i\} } \lambda_j (\bsfU_i\upLnp +\limiter_{ij}\bsfP_{ij}^n),
\qquad \text{with} \qquad \bsfP_{ij}^n := \frac{1}{m_i \lambda_j} \bsfA_{ij}^n,
\end{align}
where $\{\lambda_j\}_{j \in \calI(i)\backslash\{i\}}$ is any set of
strictly positive convex coefficients (see Remark
\ref{Rem:ConvexCoeff}), \ie
$\sum_{j \in \calI(i)\backslash\{i\}} \lambda_j = 1$, $\lambda_j > 0$
for all $j \in \calI(i)\backslash\{i\}$. The following two lemmas should convince the
reader that it is possible to estimate $\ell_{ij}$ efficiently by
doing one-dimensional line-searches only.

\begin{lemma} \label{Lem:quasiconcave} Let
  $\Psi_i(\bu):\calB \rightarrow \Real$ be a quasiconcave
  function. Assume that the limiting parameters
  $\limiter_{ij} \in [0,1]$ are such that
  $\Psi_i (\bsfU_i\upLnp + \limiter_{ij} \bsfP_{ij}^n) \geq 0$, for all
  $j \in \calI(i)\backslash\{i\}$, then the following inequality holds
  true:
\begin{align*}
\Psi_i \bigg(\sum_{j \in \calI(i)\setminus\{i\}} 
\lambda_j (\bsfU_i\upLnp + \limiter_{ij} \bsfP_{ij}^n) \bigg) \geq 0 .
\end{align*}
\end{lemma}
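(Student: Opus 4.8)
The plan is to obtain the statement as an immediate consequence of the averaging characterization of quasiconcavity recorded in Lemma~\ref{lem:quasiconv_bis}. First I would note that, for each $j\in\calI(i)\backslash\{i\}$, the state $\bsfV_j^n := \bsfU_i\upLnp + \limiter_{ij}\bsfP_{ij}^n$ is a point of $\calB$: this is forced by the very hypothesis $\Psi_i(\bsfU_i\upLnp + \limiter_{ij}\bsfP_{ij}^n)\ge 0$, since $\Psi_i$ is only defined on $\calB$. Hence $\{\bsfV_j^n\}_{j\in\calI(i)\backslash\{i\}}$ is a finite family of vectors lying in the convex set $\calB$ on which $\Psi_i$ is quasiconcave.

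Next I would apply Lemma~\ref{lem:quasiconv_bis} with the finite index set $\calS:=\calI(i)\backslash\{i\}$, the convex coefficients $\{\lambda_j\}_{j\in\calS}$ (strictly positive and summing to $1$ by the choice made around~\eqref{convFCT}), and the vectors $\{\bsfV_j^n\}_{j\in\calS}$. The inequality~\eqref{quasiconcineq} then yields
\begin{align*}
\Psi_i\bigg(\sum_{j \in \calI(i)\setminus\{i\}}\lambda_j(\bsfU_i\upLnp + \limiter_{ij}\bsfP_{ij}^n)\bigg)
= \Psi_i\bigg(\sum_{j\in\calS}\lambda_j\bsfV_j^n\bigg)
\ge \min_{j\in\calS}\Psi_i(\bsfV_j^n)\ge 0,
\end{align*}
where the last step uses the assumption $\Psi_i(\bsfV_j^n)\ge 0$ for every $j\in\calS$. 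This is exactly the asserted inequality, which closes the argument.

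I do not expect any genuine obstacle: the result is a one-line corollary of the definition of quasiconcavity once that definition is written in the convex-combination form~\eqref{quasiconcineq}, and no property of $\calB$ beyond convexity (already assumed) is needed. The only point worth stating explicitly in the write-up is the implicit membership $\bsfU_i\upLnp + \limiter_{ij}\bsfP_{ij}^n\in\calB$, which is guaranteed precisely because the hypothesis evaluates $\Psi_i$ there; the degenerate case in which $\calI(i)\backslash\{i\}$ reduces to a single index is covered trivially by the same inequality.
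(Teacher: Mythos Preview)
Your argument is correct and is essentially the same as the paper's: the paper phrases it via the upper contour set $L_0(\Psi_i)=\{\bsfU\in\calB:\Psi_i(\bsfU)\ge 0\}$, noting that each $\bsfU_i\upLnp+\limiter_{ij}\bsfP_{ij}^n$ lies in this convex set and hence so does their convex combination, while you use the equivalent characterization from Lemma~\ref{lem:quasiconv_bis}. Both routes are one-line applications of quasiconcavity and differ only in which of the two equivalent definitions is invoked.
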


\begin{proof} 
  Let $L_0(\Psi_i):=\{\bsfU \in\calB \st \Psi_i(\bsfU)\ge 0 \}$. By definition
  all the limited states
  $\bsfU_i\upLnp + \limiter_{ij} \bsfP_{ij}^n$ are in $L_0(\Psi_i)$ for all
  $j \in \calI(i)\backslash\{i\}$.  Since $\Psi_i$ is quasiconcave,
  the upper contour set $L_0(\Psi_i)$ is convex. Hence, the convex
  combination
  $\sum_{j \in \calI(i)\setminus\{i\}} \lambda_j (\bsfU_i\upLnp +
  \limiter_{ij} \bsfP_{ij}^n)$
  is in $L_0(\Psi_i)$, \ie
  $\Psi_i \big(\sum_{j \in \calI(i)\setminus\{i\}} \lambda_j
  (\bsfU_i\upLnp + \limiter_{ij} \bsfP_{ij}^n) \big) \geq 0$,
  which concludes the proof.
\end{proof}

\begin{theorem} \label{Thm:compute_lij} For every $i \in \vertind$ and
  $j\in \calI(i)$, let $\limiter^i_j$ be defined by
\begin{equation}\label{Eq:Thm:compute_lij}
\limiter^i_j=
\begin{cases}
1 &\text{if } \Psi_i(\bsfU_i\upLnp + \bsfP_{ij}^n)\ge 0, \\
\max\{\limiter \in [0,1] \st \Psi_i(\bsfU_i\upLnp +
  \limiter \bsfP_{ij}^n)\ge 0\} & \text{otherwise}.
\end{cases}
\end{equation}
The following two
statements hold true: \textup{(i)}
$\Psi_i(\bsfU_i\upLnp + \limiter \bsfP_{ij}^n)\ge 0$ for every
$\limiter \in [0,\limiter^i_j]$; \textup{(ii)} Setting
$\limiter_{ij} = \min(\limiter^i_j, \limiter^j_i)$, we have
$\Psi_i(\bsfU_i\upLnp + \ell_{ij}\bsfP_{ij}^n)\ge 0$ and
$\limiter_{ij}=\limiter_{ji}$.
\end{theorem}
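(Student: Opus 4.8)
The plan is to reduce everything to a one-dimensional convexity argument: although $\Psi_i$ is defined on a subset of $\Real^m$, all that is at stake along the edge $(i,j)$ is the behaviour of $\Psi_i$ on the segment $\{\bsfU_i\upLnp + \limiter\bsfP_{ij}^n : \limiter\in[0,1]\}\subset\calB$. The two ingredients I would use are the standing assumption $\Psi_i(\bsfU_i\upLnp)\ge 0$ introduced above, together with the quasiconcavity characterization \eqref{quasiconcineq} from Lemma~\ref{lem:quasiconv_bis}. The only regularity I would invoke is that $\Psi_i$ is upper semicontinuous along this segment — which is the case for every quasiconcave functional used in the paper (density, total/internal/kinetic energy, specific entropy, etc., all of them continuous) — so that the set $\{\limiter\in[0,1] : \Psi_i(\bsfU_i\upLnp + \limiter\bsfP_{ij}^n)\ge 0\}$ is closed and the maximum in \eqref{Eq:Thm:compute_lij} is attained.

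To prove (i), fix $i\in\calV$ and $j\in\calI(i)$ and treat the two branches of \eqref{Eq:Thm:compute_lij} separately. In the first branch $\Psi_i(\bsfU_i\upLnp + \bsfP_{ij}^n)\ge 0$ and $\limiter^i_j = 1$; for $\limiter\in[0,1]$ I would write $\bsfU_i\upLnp + \limiter\bsfP_{ij}^n = (1-\limiter)\bsfU_i\upLnp + \limiter(\bsfU_i\upLnp + \bsfP_{ij}^n)$, a convex combination of two points of $\calB$ at which $\Psi_i\ge 0$, and apply \eqref{quasiconcineq} to get $\Psi_i(\bsfU_i\upLnp + \limiter\bsfP_{ij}^n)\ge\min(\Psi_i(\bsfU_i\upLnp),\Psi_i(\bsfU_i\upLnp + \bsfP_{ij}^n))\ge 0$. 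In the second branch $\limiter^i_j$ is the attained maximum, so $\Psi_i(\bsfU_i\upLnp + \limiter^i_j\bsfP_{ij}^n)\ge 0$; if $\limiter^i_j = 0$ there is nothing to check, and otherwise for $\limiter\in[0,\limiter^i_j]$ I would set $\mu := \limiter/\limiter^i_j\in[0,1]$, write $\bsfU_i\upLnp + \limiter\bsfP_{ij}^n = (1-\mu)\bsfU_i\upLnp + \mu(\bsfU_i\upLnp + \limiter^i_j\bsfP_{ij}^n)$, and again invoke \eqref{quasiconcineq} to get $\Psi_i(\bsfU_i\upLnp + \limiter\bsfP_{ij}^n)\ge\min(\Psi_i(\bsfU_i\upLnp),\Psi_i(\bsfU_i\upLnp + \limiter^i_j\bsfP_{ij}^n))\ge 0$. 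Hence in all cases the admissible set of limiters on the edge $(i,j)$ is the whole interval $[0,\limiter^i_j]$.

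Statement (ii) then follows in a couple of lines. Commutativity of $\min$ gives $\limiter_{ij} = \min(\limiter^i_j,\limiter^j_i) = \min(\limiter^j_i,\limiter^i_j) = \limiter_{ji}$; here I would emphasize that $\limiter^i_j$ and $\limiter^j_i$ are genuinely distinct numbers — the former built from $\Psi_i$ at $\bsfU_i\upLnp$ along $\bsfP_{ij}^n$, the latter from $\Psi_j$ at $\bsfU_j\upLnp$ along the anti-parallel increment $\bsfP_{ji}^n$ — so it is precisely taking the minimum, not any symmetry of the individual $\limiter^i_j$, that makes $\limiter_{ij}$ symmetric, and hence the limited scheme \eqref{RegularFCT} conservative. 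Finally, since $0\le\limiter_{ij}\le\limiter^i_j$, part (i) applied with $\limiter = \limiter_{ij}$ yields $\Psi_i(\bsfU_i\upLnp + \limiter_{ij}\bsfP_{ij}^n)\ge 0$, which is the remaining claim.

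I do not anticipate a serious obstacle. The whole content is the elementary remark that a quasiconcave function is nonnegative along a segment as soon as it is nonnegative at both endpoints, bootstrapped from the single endpoint $\bsfU_i\upLnp$ (where positivity is assumed) out to $\bsfU_i\upLnp + \limiter^i_j\bsfP_{ij}^n$. The only point that deserves explicit care is the attainment of the maximum in \eqref{Eq:Thm:compute_lij}, which is why I would state the mild semicontinuity of $\Psi_i$ up front rather than leave it implicit; everything else is the commutativity of $\min$ and the rescaling $\mu = \limiter/\limiter^i_j$.
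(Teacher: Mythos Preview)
Your proposal is correct and follows essentially the same line as the paper's proof: both arguments reduce to the fact that if a quasiconcave function is nonnegative at the two endpoints of a segment, it is nonnegative on the whole segment. The only cosmetic difference is that the paper phrases this via convexity of the upper contour set $L_0(\Psi_i)=\{\bsfU:\Psi_i(\bsfU)\ge 0\}$ (Definition~\ref{def:quasiconv}) whereas you invoke the equivalent inequality characterization \eqref{quasiconcineq}; your explicit remark on upper semicontinuity to justify attainment of the maximum is a welcome clarification that the paper handles more informally by saying the segment ``can cross the level set only once.''
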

\begin{proof} \textup{(i)} First, if
  $\Psi_i(\bsfU_i\upLnp + \bsfP_{ij}^n)\ge 0$ we observe that
  $\Psi_i(\bsfU_i\upLnp + \limiter \bsfP_{ij}^n)\ge 0$ for any
  $\limiter\in [0,1]$ because $\bsfU_i\upLnp\in L_0(\Psi_i)$,
  $\bsfU_i\upLnp + \bsfP_{ij}^n\in L_0(\Psi_i)$ and $L_0(\Psi_i)$
  is convex. Second, if $\Psi_i(\bsfU_i\upLnp + \bsfP_{ij}^n)< 0$,
  we observe that quasiconcavity implies that $\limiter^i_j$ is
  uniquely defined since the segment $\{\bsfU_i\upLnp + \ell\bsfP_{ij}^n\st \ell\in [0,1]\}$ can cross
the level set $\partial\{\Psi_i(\bsfU)\ge 0\}$ only once; moreover, for any $\limiter\in [0,\limiter^i_j]$
  we have $\Psi_i(\bsfU_i\upLnp + \limiter \bsfP_{ij}^n)\ge 0$
  because $\bsfU_i\upLnp\in L_0(\Psi_i)$,
  $\bsfU_i\upLnp + \limiter^i_j \bsfP_{ij}^n\in L_0(\Psi_i)$ and
  $L_0(\Psi_i)$ is convex. \textup{(ii)} Since
  $\limiter_{ij} = \min(\limiter^i_j, \limiter^j_i)\le \limiter^i_j$,
  the above construction implies that
  $\Psi_i(\bsfU_i\upLnp + \ell_{ij}\bsfP_{ij}^n)\ge 0$. Note
  finally that
  $\limiter_{ij} = \min(\limiter^i_j, \limiter^j_i) = \limiter_{ji}$.
\end{proof}

\begin{remark}[Choice of convex coefficients]\label{Rem:ConvexCoeff}
There are infinitely many possible choices for the strictly positive
convex coefficients $\{\lambda_j\}_{j \in \calI(i){\setminus}\{i\}}$
in \eqref{convFCT}.  Note that it is even possible to choose a
different set $\{\lambda_j\}_{j \in \calI(i){\setminus}\{i\}}$ for
each $i \in \calV$ without affecting the results presented in this
paper. We have not made any theoretical attempt to exploit these
additional degrees of freedom in order to optimize the convex
limiting technique. All the computations reported
in \cite{GuerNazPopTom2017} have been done with the simplest choice
$\lambda_j := \frac{1}{\text{card}(\calI(i))-1}$ for all
$j\in\calI(i){\setminus}\{i\}$ for all $i \in \calV$. Other
choices have been explored computationally but none turned out to be more
efficient than the others.  It might be interesting though to
explore this question further; for instance, other choices of
convex coefficients could help preserve some symmetries.
\end{remark}

\begin{remark}[Multiple limiting]\label{rem:multlim}
  In general we have to consider families of quasiconcave functionals
  $\{\{\Psi_i\}_{i \in \calV}\}_{l\in \calL}$,
  $\Psi_i^l: \calB^l \to \Real$, where $\calB^l \subset \Real^m$ is
  the convex admissible set of the functional $\Psi_i^l$.  The list
  $\calL$ describes the nature of the functionals; this list could
  encompass any of the functionals shown in
  Examples~\ref{Example:density} to \ref{Example:kinetic_energy}.  The
  list $\calL$ is sometimes ordered in the sense that
  $\calB^{l'}\subset \calB^l$ if $l'\ge l$. Let us illustrate this
  concept with the compressible Euler equations. Usually one starts
  with $B^1=\Real^m$ to enforce a local minimum principle on the
  density (which implies positivity of the density).
We can also take $\calB^2=\Real$ to enforce a local maximum principle on the density 
by using $\Psi(\bsfU)=-\rho$. Then we can consider
  $B^3=\{\bsfU\in\calB^1\st \rho>0\}$ to enforce a local minimum
  principle on the (specific) internal energy (which implies
  positivity of the (specific) internal energy). We finally set
  $B^4=\{\bsfU\in \calB^2 \st e(\bsfU)>0\}$ to enforce a local minimum
  principle on the specific entropy. 
\end{remark}

The following result is the main conclusion of the paper.
\begin{theorem} \label{Thm:limiting_and_invariant_domain}
  Let $\{\Psi^l: \calB^l \to \Real\}_{l\in \calL}$, be a family of
  quasiconcave functionals, where the sets $\calB^l\subset \Real^m$
  are convex for all $l\in\calL$.  Let
  $\calB:\{\bsfU\in \Real^m \st \Psi^l(\bsfU)\ge 0, \ \forall
  l\in\calL\}$.  Let $n\ge 0$. Assume that
  $\min_{i\in\calV}(1+4\frac{d_{ii}\upLn}{m_i})\ge 0$ and
  $\dt\le 2 \dt_0$. Consider the quasiconcave functionals
  $\{\Psi_i^l\}_{i\in\calV,l\in\calL}$ defined by
  $\Psi_i^l(\bsfU)=\Psi^l(\bsfU)-\Psi_i^{l,\min}$ with
  $\Psi_i^{l,\min}$ defined in~\eqref{def:convexBounds}. Let
  $\limiter_{j}^{i,l}$ be the limiter computed by
  using~\eqref{Eq:Thm:compute_lij} for any $i\in\calV$, $j\in\calI(i){\setminus}\{i\}$, $l\in\calL$. Let
  $\limiter_{ij}=\min(\min_{l\in\calL} \limiter_{j}^{i,l},
  \min_{l\in\calL} \limiter_{i}^{j,l})$. Let $\bsfU_i^{n+1}$ be defined
  in~\eqref{convFCT}. Assume that $\calB$ is an invariant set for
  \eqref{def:hyperbolic_system}, then $\calB$ is an invariant domain, \ie
  $(\bsfU_i^n\in\calB,\ \forall i\in \calV)\Rightarrow (\bsfU_i^{n+1}\in\calB,\
  \forall u\in\calV)$.
\end{theorem}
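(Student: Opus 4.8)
The plan is to stitch together three facts already in hand: the natural bounds satisfied by the low-order GMS-GV update (Lemma~\ref{Lem:NaturalBounds}), the one-dimensional limiting construction (Theorem~\ref{Thm:compute_lij}), and the fact that a quasiconcave functional cannot decrease below the minimum of its values at the vertices of a convex combination (Lemma~\ref{Lem:quasiconcave}). Fix $n\ge0$ with $\bsfU_i^n\in\calB$ for all $i\in\calV$, and fix $i\in\calV$; since $\calB\subseteq\calB^l$ for every $l\in\calL$, each $\bsfU_j^n$ with $j\in\calI(i)$ lies in every $\calB^l$.

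First I would control the low-order update $\bsfU_i\upLnp$. Because $\calB$ is an invariant set and the CFL hypotheses hold, Lemma~\ref{Lem:InvarBar} gives $\overline\bsfU_{ij}^n\in\calB$ for every $j\in\calI(i)$, and Definition~\ref{Def:invariant_set} with $2\dt\le\tau_0$ gives $\bsfU_i^n+2\dt\bS(\bsfU_i^n)\in\calB$. By \eqref{def_dij_scheme_convex_with_source}, $\bsfU_i\upLnp$ is a convex combination of these states; they all lie in $\calB$, hence in every $\calB^l$, so quasiconcavity of $\Psi^l$ (Lemma~\ref{lem:quasiconv_bis}) yields $\Psi^l(\bsfU_i\upLnp)\ge\Psi_i^{l,\min}$, that is, $\Psi_i^l(\bsfU_i\upLnp)\ge0$, for every $l\in\calL$. (This is the content of Lemma~\ref{Lem:NaturalBounds}, applied to the pair $(\calB^l,\Psi^l)$ with the auxiliary and source-shifted states supplied by $\calB$.) I would also record that $\Psi_i^{l,\min}\ge0$, being a minimum of values of $\Psi^l$ at points of $\calB$.

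Next I would limit. Fix $l\in\calL$. The shifted functional $\Psi_i^l=\Psi^l-\Psi_i^{l,\min}$ is quasiconcave and satisfies $\Psi_i^l(\bsfU_i\upLnp)\ge0$, so Theorem~\ref{Thm:compute_lij} applies with $\Psi_i:=\Psi_i^l$ and gives $\Psi_i^l(\bsfU_i\upLnp+\limiter\bsfP_{ij}^n)\ge0$ for every $\limiter\in[0,\limiter_j^{i,l}]$. Since $\limiter_{ij}=\min(\min_{l'\in\calL}\limiter_j^{i,l'},\min_{l'\in\calL}\limiter_i^{j,l'})\le\limiter_j^{i,l}$, every limited state $\bsfU_i\upLnp+\limiter_{ij}\bsfP_{ij}^n$ lies in $L_0(\Psi_i^l)\subseteq\calB^l$. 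Applying Lemma~\ref{Lem:quasiconcave} to the convex combination \eqref{convFCT} then gives $\Psi_i^l(\bsfU_i^{n+1})\ge0$, hence $\bsfU_i^{n+1}\in\calB^l$ and $\Psi^l(\bsfU_i^{n+1})\ge\Psi_i^{l,\min}\ge0$. As $l\in\calL$ was arbitrary, $\bsfU_i^{n+1}\in\bigcap_{l\in\calL}\calB^l$ with $\Psi^l(\bsfU_i^{n+1})\ge0$ for all $l$, i.e.\ $\bsfU_i^{n+1}\in\calB$; and $i\in\calV$ was arbitrary, so $\calB$ is an invariant domain.

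There is no real obstacle here — the proof is essentially an assembly of the preceding lemmas — but the step that needs attention is the domain issue: one must check at each stage that the state handed to $\Psi^l$ genuinely belongs to $\calB^l$, rather than merely satisfying a formal inequality. This is handled uniformly: every such state is a convex combination of points already known to lie in the convex set $\calB^l$, namely either points of $\calB\subseteq\calB^l$ (for the auxiliary states, the source-shifted state, and $\bsfU_i\upLnp$) or points of the upper contour set $L_0(\Psi_i^l)\subseteq\calB^l$ (for the limited states), so nothing ever leaves $\calB^l$.
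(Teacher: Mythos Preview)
Your proof is correct and follows essentially the same route as the paper's: show $\bsfU_i\upLnp\in\calB$ via the invariant-set property under the CFL conditions, deduce $\Psi_i^l(\bsfU_i\upLnp)\ge0$ and $\Psi_i^{l,\min}\ge0$, then use Theorem~\ref{Thm:compute_lij} (with the monotonicity in $\ell$) together with Lemma~\ref{Lem:quasiconcave} to conclude $\Psi^l(\bsfU_i^{n+1})\ge\Psi_i^{l,\min}\ge0$. If anything, your write-up is more careful than the paper's about explicitly invoking Lemma~\ref{Lem:quasiconcave} for the passage from the per-edge limited states to the convex combination~\eqref{convFCT}, and about tracking that every intermediate state stays in the domain $\calB^l$.
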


\begin{proof} 
  Notice first that $\calB$ is convex since it is the intersection of
  convex sets
  $\calB=\bigcap_{l\in\calL}\{\bsfU\in\Real^m\st \psi^l(\bsfU)\ge
  0\}$.
  Since $\calB$ is a convex invariant set for
  \eqref{def:hyperbolic_system}, the CFL assumption together with
  Theorem~\ref{Thm:UL_is_invariant} implies that
  $\bsfU_i\upLnp\in \calB$ for all $i\in\calV$. Then
  Theorem~\ref{Thm:compute_lij} can be applied because
  $\Psi_i^l(\bsfU\upLnp)\ge 0$. This theorem then implies that
  $\Psi^l(\bsfU_i^{n+1})\ge \Psi_i^{l,\min}$ for all $\l\in \calL$.
  Moreover, $\bsfU_i^n+2\dt\bS(\bsfU_i^n) \in\calB$ and
  $\overline\bsfU_{ij}^n\in\calB$, then owing to the CFL assumption
  and definition \eqref{def:convexBounds}, this implies that
  $\Psi_i^{l,\min} \ge 0$. In conclusion $\Psi^l(\bsfU_i^{n+1})\ge 0$
  for all $\l\in \calL$, which implies that $\bsfU_i^{n+1}\in \calB$.
\end{proof}

\begin{remark}[SSP extension]
Owing to remark~\ref{Rem:Tensor_structure_of_B}, Theorem~\ref{Thm:limiting_and_invariant_domain}
extends to any SSP RK time stepping provided the limiting is done at the end of each 
elementary forward Euler substep.
\end{remark}

\subsection{Implementation details}

The objective of this section to give further details on the convex
limiting technique introduced above in order to help the reader to
implement it.

\subsubsection{Pseudocode of the limiting algorithm}
Given a set of quasi-convex functionals $\{\Psi_i\}_{i \in \vertind}$,
$\Psi_i:\calB\rightarrow \Real$, such that
$\Psi_i(\bsfU_i\upLnp) \geq 0$ with convex set $\calB$,
Algorithm~\ref{PseudoCodeAbstract} enforces the quasi-concave
constraints $\Psi_i(\bsfU_i^{n+1}) \geq 0$ for each $i \in \vertind$.
This pseudocode attempts to reflect as accurately as possible the way
convex limiting is coded in practice. Basically, convex limiting is
done in two loops over the set of the global degrees of freedom
$\vertind$: the first loop (lines \ref{beg1} to \ref{end1}) computes
the matrix $\limiter^i_j$ in general non-symmetric form; the second
loop (lines \ref{beg2} to \ref{end2}) computes the final symmetric
limiter $\limiter_{ij}$. Lemma \ref{Lem:quasiconcave} explains why the
limiters $\limiter^i_j$ estimated in the first loop are large enough
to enforce the constraint $\Psi_i(\bsfU_i^{n+1}) \geq 0$ for each
$i \in \vertind$.  Theorem \ref{Thm:compute_lij} explains why the
symmetrization (shrinkage) of the limiters done in the second loop
still produces limiters compatible with these constraints.  We have
found that initializing $\limiter_i$ with the
lines~\ref{beg_better_init}--\ref{end_better_init} instead of setting
$\limiter_i=1$ reduces the number of times the line-search in
line~\ref{line_l_i_j} is executed.  

 \begin{algorithm}
\caption{Convex Limiting}
\label{PseudoCodeAbstract}
\begin{algorithmic}[1]
\For{$i \in \vertind$} \label{beg1}
\If{$\Psi_i(\bsfU_i\upHnp )\ge 0$} \label{beg_better_init}
\State $\limiter_i := 1$ 
\Else
\State $\limiter_i := \max\{\limiter \in [0,1] \st 
\Psi_i(\bsfU_i\upLnp + \limiter  (\bsfU_i\upHnp - \bsfU_i\upLnp)  )\ge 0\}$
\label{line_l_i_j_init}
\EndIf \label{end_better_init}
\For{$j \in \calI(i)\backslash\{i\} $} 
\If{$\Psi_i(\bsfU_i\upLnp + \limiter_i\bsfP_{ij}^n)\ge 0$}
\State $\limiter^i_j := \limiter_i$ 
\Else
\State $ \limiter^i_j := \max\{\limiter \in [0,\limiter_i] \st \Psi_i(\bsfU_i\upLnp  
+ \limiter\,  \bsfP_{ij}^n)\ge 0\}$ \label{line_l_i_j}
\EndIf
\EndFor
\EndFor \label{end1}
\For{$i \in \vertind$} \label{beg2}
\For{$j \in \calI(i)\backslash\{i\} $}
\State $\limiter_{ij} := \min\{\limiter^i_j,\limiter^j_i\}$ 
\EndFor
\EndFor \label{end2}
\end{algorithmic}
\end{algorithm}



\subsubsection{Transforming $\Psi_i(\bsfU)\ge 0$ into a quadratic constraint}\label{sec:polratio} 
As mentioned in the previous subsection, the line-search invoked in
line~\ref{line_l_i_j_init} and line~\ref{line_l_i_j} of
Algorithm~\ref{PseudoCodeAbstract} could be computationally
expensive. However, it happens sometimes that the constraint of
interest $\Psi_i(\bsfU)\ge 0$ can be transformed into
$\tilde\Psi_i(\bsfU)\ge 0$ where $\tilde\Psi_i$ is a quadratic
function, not necessarily quasi-concave. In this case it is possible
to design a very efficient algorithm for the line-search.

\begin{example}[Internal energy] \label{Ex:Rhoe} To illustrate the
  above statment, let us consider the compressible Euler equations
  with some arbitrary equation of state.  Let us set
  $\calB=\{\bsfU:=(\rho,\bbm,E)^\trans \st \rho>0\}$,
  $\varepsilon(\bsfU) := E - \frac{|\bbm|_{\ell^2}^2}{2\rho}$
  (internal energy), and
  $\Psi_i(\bsfU) := \varepsilon(\bsfU) - \varepsilon_i^{\min}$.  We
  have seen in Example~\ref{Ex:internal_energy} that
  $\Psi_i:\calB \to \Real$ is quasiconcave (actually
  $\Psi_i:\calB \to \Real$ is concave). It is clear that one has
  $\Psi_i(\bsfU)\ge 0$ iff
  $\tilde{\Psi}_i(\bsfU) := \rho\varepsilon(\bsfU) - \rho
  \varepsilon_i^{\min} \ge 0$
  for all $\bsfU\in \calB$. Notice that
  $\rho\varepsilon(\bsfU) = E \rho - \frac{1}{2}\bbm^2$ and
  $\rho \varepsilon_i^{\min}$ are quadratic polynomials of the
  conserved variables; hence, $\tilde{\Psi}_i(\bsfU)$ is quadratic
  (but a simple computation shows also that $\tilde{\Psi}_i$ is not
  quasiconcave). In conclusion, instead of doing the line-search 
  with $\Psi_i(\bsfU) := \varepsilon(\bsfU) - \varepsilon_i^{\min}$,
  one can do the line-search with the quadratic functional
  $\tilde{\Psi}_i(\bsfU)= \rho\varepsilon(\bsfU) - \rho
  \varepsilon_i^{\min}$.
\end{example}

We now state an abstract result that formalizes the above observation.

\begin{lemma}\label{lem:parablinesearch} Let
  $\Psi:\calB\subset \Real^m \to \Real$.  Let $\bsfU\upL\in \calB$ and
  assume that $\Psi(\bsfU\upL) \geq 0$.  Let $\tilde\Psi:\calB\to \Real$,
  let $\bsfP\in\Real^m$, and assume that there is
  $\limiter^{\max}\in [0,1]$ such that
  $\Psi(\bsfU\upL + \limiter \bsfP)\ge 0$ iff
  $\tilde\Psi(\bsfU\upL + \limiter \bsfP)\ge 0$ for all
  $\limiter\in[0,\limiter^{\max}]$.  Assume that $\tilde\Psi$ is
  quadratic and let
  $a :=\frac{1}{2}\bsfP^\trans \mathrm{D}^2\tilde\Psi\bsfP$,
  $b := \mathrm{D}\tilde\Psi(\bsfU\upL) \!\cdot \!\bsfP$ and $c := \tilde\Psi(\bsfU\upL)$.
  Let $\ell^{\min}$ be the smallest positive root of the equation
  $a \ell^2 + b \ell + c=0$, with the convention that
  $\ell^{\min} :=1 $ if the equation has no positive root. Let
  $\limiter_j^{i} := \min(\ell^{\min},\ell^{\max})$, then
  $\Psi(\bsfU\upL + \limiter \bsfP)\ge 0$ for all
  $\limiter\in [0, \limiter_j^{i}]$.
\end{lemma}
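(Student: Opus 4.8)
The plan is to reduce the statement to an elementary sign analysis of a scalar quadratic. Introduce the one–variable function $q(\limiter):=\tilde\Psi(\bsfU\upL+\limiter\bsfP)$. Since $\tilde\Psi$ is quadratic and $\limiter\mapsto\bsfU\upL+\limiter\bsfP$ is affine, $q$ is a polynomial of degree at most two; because $\mathrm{D}^2\tilde\Psi$ is constant, the (exact) second–order Taylor expansion of $q$ about $\limiter=0$ reads $q(\limiter)=a\limiter^2+b\limiter+c$, with $a$, $b$, $c$ precisely the three coefficients defined in the statement. In particular $q$ is continuous, $q(0)=c=\tilde\Psi(\bsfU\upL)$, and the positive roots of $q$ coincide with the positive roots of $a\limiter^2+b\limiter+c=0$; hence, by the very definition of $\ell^{\min}$, $q$ does not vanish on the open interval $(0,\ell^{\min})$, and $q(\ell^{\min})\ge 0$ (either $\ell^{\min}$ is a root of $q$, so $q(\ell^{\min})=0$, or $\ell^{\min}=1$ because $q$ has no positive root).

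Next I would pin down the sign of $c$: evaluating the assumed equivalence at $\limiter=0$, which lies in $[0,\limiter^{\max}]$, together with the hypothesis $\Psi(\bsfU\upL)\ge 0$, gives $c=\tilde\Psi(\bsfU\upL)\ge 0$. Combining this with the previous paragraph, $q$ is continuous, $q(0)\ge 0$, and $q$ has no zero on $(0,\ell^{\min})$, so by the intermediate value theorem $q$ keeps a constant sign on $(0,\ell^{\min})$; since $q(0)\ge 0$ this sign is nonnegative, whence $q(\limiter)\ge 0$ for every $\limiter\in[0,\ell^{\min}]$. The one point that needs care is the borderline situation $c=0$, where $q(0)=0$ does not by itself force the constant sign on $(0,\ell^{\min})$ to be nonnegative; this is disposed of either by the convention fixing $\ell^{\min}$ or, in practice, by observing that after the relaxation of the bounds (cf.~\S\ref{Sec:relaxing_the_bounds}) one has $\Psi(\bsfU\upL)>0$, hence $c>0$. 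This degenerate case is, I expect, the only genuinely fiddly spot in the argument.

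Finally I would carry the conclusion back from $\tilde\Psi$ to $\Psi$. By construction $\limiter_j^{i}=\min(\ell^{\min},\limiter^{\max})$, so any $\limiter\in[0,\limiter_j^{i}]$ lies simultaneously in $[0,\ell^{\min}]$ and in $[0,\limiter^{\max}]$. For such a $\limiter$ the hypothesis supplies the equivalence $\Psi(\bsfU\upL+\limiter\bsfP)\ge 0\iff\tilde\Psi(\bsfU\upL+\limiter\bsfP)=q(\limiter)\ge 0$, and the right–hand side was just established because $\limiter\le\ell^{\min}$; therefore $\Psi(\bsfU\upL+\limiter\bsfP)\ge 0$, which is exactly the assertion. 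In summary the argument is a substitution plus one application of the intermediate value theorem; the main obstacle is not conceptual but lies in the bookkeeping — correctly tracking the roots of the quadratic, the capping convention $\ell^{\min}:=1$, and the degenerate case $c=0$.
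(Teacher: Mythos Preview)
Your argument is essentially the same as the paper's: define the scalar quadratic $g(\ell)=\tilde\Psi(\bsfU\upL+\ell\bsfP)=a\ell^2+b\ell+c$, use the equivalence at $\ell=0$ to obtain $c\ge 0$, invoke the intermediate value theorem to show $g\ge 0$ on $[0,\ell^{\min}]$, and then pass back to $\Psi$ via the equivalence on $[0,\ell^{\max}]$. You are in fact slightly more explicit than the paper in flagging the degenerate case $c=0$, which the paper's IVT contradiction argument (the existence of a root in $(0,\ell_1)$ from $g(0)\ge 0$ and $g(\ell_1)<0$) does not strictly cover either; your remark that in practice the relaxed bounds yield $c>0$ is an honest acknowledgment of this borderline case rather than a gap in the main line of reasoning.
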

\begin{proof}
  Let us first observe that
  $\tilde\Psi(\bsfU\upL + \limiter \bsfP) = a\ell^2 + b\ell +
  c\ell=:g(\ell)$ for all $\ell\in [0,\ell^{\max}]$; hence,
  $\Psi(\bsfU\upL + \limiter \bsfP)\ge 0$ iff $g(\ell)\ge 0$ for all
  $\ell\in [0,\ell^{\max}]$.  If there is no positive root to the
  equation $a \ell^2 + b \ell + c =0$, then the sign of $g(\ell)$ over
  $[0,\infty)$ is constant. The assumption $g(0)=c:=\Psi(\bsfU\upL)\ge 0$, implies that
  $g(\ell)\ge 0$ for all $\ell\in [0,\infty)$. That is,
  $\Psi(\bsfU\upL + \limiter \bsfP)\ge 0$ for all
  $\ell\in [0,\ell^{\max}]$, and in particular this is true for all
  $\limiter\in [0, \limiter_j^{i}]$ since in this case
  $\limiter_j^{i} := \min(\ell^{\min},\ell^{\max})\le \ell^{\max}$.
  Otherwise, if there is at least one positive root to the equation
  $g(\ell)=0$, then denoting by $\ell^{\min}$ the smallest positive
  root, we have $g(\ell) \ge 0$ for all $\ell\in[0, \ell^{\min}]$ (if
  not, there would exist $\ell_1\in (0,\ell^{\min})$ s.t.
  $g(\ell_1)< 0$ and the intermediate value theorem would imply the
  existence a root $\ell^*\in(0,\ell_1)$ which contradicts that
  $\ell^{\min}$ is the smallest positive root).  This argument implies
  again that $\Psi(\bsfU\upL + \limiter \bsfP )\ge 0$ for all
  $\limiter\in[0,\limiter_j^{i}]$.
\end{proof}

\begin{example}[Kinetic energy] \label{Ex:Kinetic_energy}
  Coming back to the compressible Euler equations or the shallow water
  equations, the above technique can be applied to enforce the local
  maximum principle on the kinetic energy $\Psi_i(\bsfU)\ge 0$, with
  $\Psi_i(\bsfU) = \Psi(\bsfU)-\Psi_i^{\min}$ and
  $\Psi(\bsfU) =-\frac12 \rho^{-1} \bbm^2$ with
  $\calB=\{\bsfU:=(\rho,\bbm,E)\tr \st \rho>0\}$. (Notice that
  because of the sign convention $\Psi_i^{\min}$ is the \emph{maximum}
  of the kinetic energy over the states
  $\{\overline\bsfU_{ij}^n\}_{j\in\calI(i)}$ and the state
  $\bsfU_i^n+2\dt \bS(\bsfU_i^n)$. Hence the constraint
  $\Psi_i(\bsfU)\ge 0$ amounts to enforcing a local maximum principle
  on the kinetic energy.) We have shown in
  Example~\ref{Example:kinetic_energy} that $\Psi_i$ is quasiconcave.
  In this case Lemma~\ref{lem:parablinesearch} can be applied with the
  functional $\tilde\Psi_i(\bsfU) =\rho \Psi_i(\bsfU) =-\frac12\bbm^2 - \rho\Psi_i^{\min}$
  which is clearly quadratic. 
Note that  $\tilde\Psi_i(\bsfU\upL+\limiter \bsfP)\ge 0$
iff $\Psi_i(\bsfU\upL+\limiter \bsfP)\ge 0$ provided $\rho(\bsfU\upL+\limiter \bsfP)\ge 0$.
Hence before applying Lemma~\ref{lem:parablinesearch}, one must 
compute the limiter $\limiter^{\max}$, which depends on 
$\bsfU\upL$ and $\bsfP$, such that $\rho(\bsfU\upL+\limiter \bsfP)\ge 0$ for all
$\limiter\in [0,\limiter^{\max}]$.
 This technique has been introduced in
  \citep[\S6.4]{Guermond_Quezada_Popov_Kees_Farthing_2018} in the
  context of the shallow water equations.
\end{example}

\begin{remark}[Parameter $\ell^{\max}$] 
  The purpose of the parameter $\ell^{\max}$ appearing in the
  statement of Lemma~\ref{lem:parablinesearch} is to ascertain that
  stating that $\Psi(\bsfU + \limiter \bsfP)\ge 0$ is equivalent to
  stating that $\tilde\Psi(\bsfU + \limiter \bsfP)\ge 0$ for all
  $\limiter\in[0,\limiter^{\max}]$. The limiter $\limiter^{\max}$ depends on 
$\bsfU\upL$ and $\bsfP$ and  must be computed before
applying Lemma~\ref{lem:parablinesearch}; see Example~\ref{Ex:Kinetic_energy}.
\end{remark}

\subsubsection{Transforming $\Psi_i(\bsfU)\ge 0$ into a concave constraint}
It is sometimes possible to transform a quasiconcave constraint into a
concave constraint. This type of transformation is useful, since
designing efficient and robust line-search procedures for general
quasiconcave functionals is not a trivial task, whereas it is always
possible to use the Newton-secant algorithm presented in
\S\ref{Sec:Newton_secant} for concave functionals.

For instance, let $\Psi:\calB\to \Real$ be a
quasiconcave function, then referring to
Lemma~\ref{Lem:quasiconcavity}, it is sometimes possible to find
$R:\calB\to (0,\infty)$, positive and convex, such that $R\Psi$ is
concave. This is indeed the case for any ``specific'' entropy as
described in Example~\ref{Example:specific_entropy}. 
The following lemma formalizes this observation.

\begin{lemma} \label{Lem:quasiconcave_to_concave} Let
  $\calB\subset \Real^m$ be a convex set. Let $\Psi:\calB \to \Real$
  and $R:\calB\to (0,\infty)$.  Assume that
  $\Phi:=R\Psi : \calB \to \Real$ is concave.  Let
  $\bsfU\upL\in \calB$ and assume that $\Psi(\bsfU\upL)\ge 0$.  Let
  $\bsfP\in \Real^m$ and let $\ell_{\max}\in [0,1]$ be such that
  $\bsfU\upL+\ell\bsfP\in\calB$ for all $\ell\in
  [0,\ell^{\max}]$.
  $\Psi^{\min}\in \Real$.  Assume that either (i) $R$ is affine or
  (ii) $\Psi^{\min}\ge 0$ and $R$ is convex.  Then the following
  statements hold true:
\begin{enumerate}[(i)] 
\item $\Psi(\bsfU\upL+\ell\bsfP) - \Psi^{\min} \ge 0$ iff
  $\Phi(\bsfU\upL+\ell\bsfP) -\Psi^{\min} R(\bsfU\upL+\ell\bsfP)\ge 0$ for all
  $\ell\in [0,\ell^{\max}]$;
\item the map
  $[0,\ell^{\max}]\ni \ell\mapsto \Phi(\bsfU\upL+\ell\bsfP) -\Psi^{\min}
  R(\bsfU\upL+\ell\bsfP)\in \Real$ is concave.
\end{enumerate}
\end{lemma}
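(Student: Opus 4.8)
The plan is to restrict everything to the one-dimensional segment $\{\bsfU\upL + \ell\bsfP \st \ell\in[0,\ell^{\max}]\}$, which lies in $\calB$ by hypothesis, and to reduce the two assertions to elementary observations about the scalar functions $\ell\mapsto \Phi(\bsfU\upL+\ell\bsfP)$ and $\ell\mapsto R(\bsfU\upL+\ell\bsfP)$. This is essentially the one-dimensional restriction of the reasoning already used in the proof of Lemma~\ref{Lem:quasiconcavity}.

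For item (i), I would fix $\ell\in[0,\ell^{\max}]$, set $\bsfV:=\bsfU\upL+\ell\bsfP\in\calB$, and use the definition $\Phi=R\Psi$ to write
\[
\Phi(\bsfV) - \Psi^{\min} R(\bsfV) = R(\bsfV)\big(\Psi(\bsfV) - \Psi^{\min}\big).
\]
Since $R(\bsfV)>0$, the left-hand side is nonnegative precisely when $\Psi(\bsfV)-\Psi^{\min}\ge 0$, which is the claimed equivalence.

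For item (ii), the map $\ell\mapsto \Phi(\bsfU\upL+\ell\bsfP)$ is concave because it is the restriction of the concave function $\Phi$ to an affine line. The map $\ell\mapsto R(\bsfU\upL+\ell\bsfP)$ is affine in case~(i) and convex in case~(ii). In case~(i) the term $-\Psi^{\min} R(\bsfU\upL+\ell\bsfP)$ is affine (hence concave) regardless of the sign of $\Psi^{\min}$; in case~(ii), since $\Psi^{\min}\ge 0$ and $R(\bsfU\upL+\ell\bsfP)$ is convex, the term $-\Psi^{\min} R(\bsfU\upL+\ell\bsfP)$ is again concave. In either case $\ell\mapsto \Phi(\bsfU\upL+\ell\bsfP) - \Psi^{\min} R(\bsfU\upL+\ell\bsfP)$ is a sum of two concave functions, hence concave.

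There is no genuine obstacle in this argument; the only point requiring a small amount of attention is the case split on the sign of $\Psi^{\min}$, which is precisely accommodated by the dichotomy ``(i) $R$ affine / (ii) $R$ convex and $\Psi^{\min}\ge 0$'', exactly as in Lemma~\ref{Lem:quasiconcavity}. One should also note explicitly that $R(\bsfV)>0$ is guaranteed only because $\bsfV\in\calB$, which is why the hypothesis $\bsfU\upL+\ell\bsfP\in\calB$ on $[0,\ell^{\max}]$ is needed.
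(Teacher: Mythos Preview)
Your proof is correct and follows essentially the same approach as the paper: for (i) both factor $\Phi-\Psi^{\min}R = R(\Psi-\Psi^{\min})$ and use $R>0$ on $\calB$; for (ii) both observe that $-\Psi^{\min}R$ is concave under either hypothesis and add it to the concave $\Phi$. Your version is slightly more explicit in restricting to the one-dimensional segment, but the argument is identical.
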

\begin{proof}
  (i) Since $\bsfU\upL+\ell\bsfP\in \calB$ for all
  $\ell\in [0,\ell^{\max}]$, we infer that $R(\bsfU\upL+\ell\bsfP)>0$ for
  all $\ell\in [0,\ell^{\max}]$. Hence, the first assertion is a
  consequence of the assumption $R(\bsfU\upL+\ell\bsfP)>0$ for all
  $\ell\in [0,\ell^{\max}]$. (ii) Observe that
  $-\Psi^{\min}R:\calB \to \Real$ is concave if $R:\calB \to \Real$ is
  affine. Observe also that that $-\Psi^{\min}R:\calB \to \Real$ is
  concave if $R:\calB \to \Real$ is convex and
  $\Psi^{\min}\ge 0$. Hence the second
  assertion is just a consequence of the concavity of $\Phi: \calB \to \Real$.
\end{proof}

\begin{example}[Specific
  entropy] \label{Ex:specific_entropy_becomes_concave} Let us
  illustrate the use of Lemma~\ref{Lem:quasiconcave_to_concave} with
  the compressible Euler equations.  Assume to simplify the argument
  that the equation of state is the $\gamma$-law.  Consider the
  physical specific entropy
  $\Psi(\bsfU) =
  \frac{1}{\gamma-1}\log(\varepsilon(\bsfU)\rho^{-\gamma})$
  and the quasiconcave constraint $\Psi(\bsfU) - \Psi_i^{\min}\ge 0$.
  Line-searches for this quasiconcave functional may be delicate
  (lines \ref{line_l_i_j_init} and \ref{line_l_i_j} in Algorithm
  \ref{PseudoCodeAbstract}), not only because it is not strictly
  concave, but also because of the presence of the logarithm.  We have
  seen in Example~\ref{Ex:change_of_variable_specific_entropy} that
  this constraint can be transformed into another quasiconcave
  constraint $\tilde\Psi(\bsfU)-\tilde\Psi_i^{\min}\ge 0$ with
  $\tilde\Psi(\bsfU):=\varepsilon(\bsfU)\rho^{-\gamma} =
  \exp((\gamma-1)\Psi(\bsfU))$.
  Let us assume that the solution at the previous time step
  $\bsfU^{n}$ is such that $\tilde\Psi_i^{\min}\ge 0$ for all
  $i \in \calV$, which is reasonable since it requires the internal
  energy and the density to be nonnegative at $t^n$.  Then using
  $R(\bsfU)= \rho^{\gamma}$, which is convex over
  $\calB=\{\bsfU\st \rho>0\}$, using that
  $R(\bsfU)\tilde\Psi(\bsfU)=\varepsilon(\bsfU)$ is concave, and
  $\tilde\Psi_i^{\min}\ge 0$, and invoking
  Lemma~\ref{Lem:quasiconcave_to_concave}, we finally transform
  (again) the above quasiconcave constraint into the concave
  constraint
  $\varepsilon(\bsfU) - \rho^\gamma \tilde\Psi_i^{\min}\ge 0$.  Notice
  in passing that, for the $\gamma$-law, enforcing positivity of the
  density and the above local minimum principle on the
  specific entropy
  ($\varepsilon(\bsfU) - \rho^\gamma \tilde\Psi_i^{\min}\ge 0$)
  guarantees positivity of the internal energy.
\end{example}

The parameter $\ell^{\max}$ appearing in the statement of
Lemma~\ref{Lem:quasiconcave_to_concave} arises naturally when one
performs convex limiting for more than one functional. More precisely,
before applying \eqref{Lem:quasiconcave_to_concave} one must sure that
$\bsfU\upL+\ell\bsfP\in\calB$ for all $\ell\in [0,\ell^{\max}]$ by
convex limiting so that $R(\bsfU\upL+\ell\bsfP)>0$.  For instance, in
the setting of Example~\ref{Ex:specific_entropy_becomes_concave}, the
parameter $\ell^{\max}$ is the limiter that must be computed to
ascertain that the density of the state $\bsfU\upL+\ell \bsfP$ is
positive over the interval $[0,\ell^{\max}]$.

\subsubsection{Line-search: The Newton-secant solver} \label{Sec:Newton_secant}

Unless the function $g(\ell) := \Psi_i(\bsfU\upLnp + \ell \bsfP_{ij}^n)$ has a
special structure (say, linear or quadratic), the line-searches
invoked at lines \ref{line_l_i_j_init} and \ref{line_l_i_j} in
Algorithm~\ref{PseudoCodeAbstract} require the use of an iterative
procedure.  Without claiming originality, we now show how the
line-searches can be done by using the Newton-secant algorithm to guarantee
that $\Psi_i(\bsfU\upL + \limiter_j^i \bsfP_{ij}^n) \ge 0$ independently of the
tolerance that is given to the algorithm to estimate $\limiter_j^i$.
 
Let us assume that $g(\ell) \in \mathcal{C}^2([0,1];\Real)$ is
strictly concave and $g(0) > 0$. Let us set $\limiter_l^0=0$. Let us
assume also that there exists $\limiter_r^0\in(0,1]$ such that
$g(\limiter_r^0)<0$.  Hence there exists a unique number
$\limiter^*\in(\limiter_l^0,\limiter_r^0)$ such
$g(\limiter_l^0)>g(\limiter^*)=0>g(\limiter_r^0)$. Our goal is now to
estimate iteratively $\limiter^*$ from below, up to some fixed
tolerance.  Notice that in this particular setting Newton's algorithm
converges from above; that is, Newton's algorithm will always return an
approximate value of $\limiter^*$ that is larger than $\limiter^*$, (unless $g$ is
quadratic). The following lemma describes an iterative process $(\limiter_l^k,\limiter_r^k)\to
(\limiter_l^{k+1},\limiter_r^{k+1})$, $k\ge 0$, such that 
\[
\limiter_l^0<\ldots<\limiter_l^k< \limiter_l^{k+1}<\ldots\le \limiter^* \le
\ldots < \limiter_r^{k+1}<\limiter_r^{k}<\ldots< \limiter_r^{0}
\]


\begin{lemma}[One iteration update]\label{NSorder} 
  Let $\limiter_l^k<\limiter_r^k$. Let
  $g\in C^2([\limiter_l^k,\limiter_r^k];\Real)$.  Assume that
  $g''(\limiter)<0$ for all
  $ \limiter\in [\limiter_l^k,\limiter_r^k]$.  Assume that
  $g(\ell_l^{k}) > 0$ and $g(\ell_r^{k}) < 0$.
\begin{enumerate}[(i)]
\item Let $s_l^k := \frac{g(\ell^{k,r}) - g(\ell^{k,l})}{\ell^{k,r} - \ell^{k,l}}$ and
$s_r^k:= g'(\ell^{k,r})$. Then $s_l^k<0$ and $s_r^k<0$. 
\item Let $\limiter_l^{k+1}$
and  $\limiter_r^{k+1}$ be defined by 
\begin{align*}
\ell_l^{k+1} := \ell_l^{k} - \frac{g(\ell_l^{k})}{s_l^{k}},\qquad
\ell_r^{k+1} := \ell_r^{k} - \frac{g(\ell_r^{k})}{s_r^{k}}.
\end{align*}
Then $\ell^{k}_l<\ell_l^{k+1} < \ell^{*} < \ell_r^{k+1}< \limiter_r^{k}$.
\end{enumerate}
\end{lemma}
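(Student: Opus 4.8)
The plan is to derive everything from two elementary facts about a strictly concave $C^2$ function $g$ on $[\ell_l^k,\ell_r^k]$: its chord lies strictly below its graph on the open interval, and its tangent at any interior point lies strictly above its graph elsewhere. First I would record that the hypotheses $g(\ell_l^k)>0>g(\ell_r^k)$ together with continuity and $g''<0$ force $g$ to have a \emph{unique} zero $\ell^*\in(\ell_l^k,\ell_r^k)$: existence is the intermediate value theorem, and a second zero would, by concavity of the chord through $\ell_l^k$ and the two zeros, force $g(\ell_l^k)\le 0$, a contradiction. This is the $\ell^*$ appearing in the statement.

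For (i): $s_l^k<0$ is immediate, since $g(\ell^{k,r})-g(\ell^{k,l})<0$ while $\ell^{k,r}-\ell^{k,l}>0$. For $s_r^k=g'(\ell^{k,r})$, the mean value theorem gives $\xi\in(\ell^{k,l},\ell^{k,r})$ with $g'(\xi)=s_l^k<0$; since $g''<0$ on the interval, $g'$ is strictly decreasing, so $g'(\ell^{k,r})<g'(\xi)=s_l^k<0$, i.e. $s_r^k<0$ (in fact $s_r^k<s_l^k$).

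For (ii), the left iterate: $\ell_l^{k+1}=\ell_l^k-g(\ell_l^k)/s_l^k$ is exactly the zero of the chord $L(\ell):=g(\ell_l^k)+s_l^k(\ell-\ell_l^k)$ joining the two endpoint values. Because $g(\ell_l^k)>0$ and $s_l^k<0$ we get $\ell_l^{k+1}>\ell_l^k$, and because $L$ is affine and decreasing with $L(\ell_l^k)>0>L(\ell_r^k)$ its zero lies strictly between the endpoints, so $\ell_l^k<\ell_l^{k+1}<\ell_r^k$. Strict concavity then gives $g(\ell_l^{k+1})>L(\ell_l^{k+1})=0$, which together with $g(\ell_r^k)<0$ and uniqueness of the zero places $\ell^*$ in $(\ell_l^{k+1},\ell_r^k)$, hence $\ell_l^{k+1}<\ell^*$. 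For the right iterate: $\ell_r^{k+1}=\ell_r^k-g(\ell_r^k)/s_r^k$ is the zero of the tangent $T(\ell):=g(\ell_r^k)+g'(\ell_r^k)(\ell-\ell_r^k)$ at $\ell_r^k$; since $g(\ell_r^k)<0$ and $g'(\ell_r^k)=s_r^k<0$ we get $\ell_r^{k+1}<\ell_r^k$. Strict concavity gives $g(\ell)<T(\ell)$ for all $\ell\ne\ell_r^k$ in the interval; evaluating at $\ell^*$ (legitimate since $\ell^*<\ell_r^k$) yields $T(\ell^*)>g(\ell^*)=0=T(\ell_r^{k+1})$, and since $T$ is affine with negative slope this forces $\ell^*<\ell_r^{k+1}$. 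Assembling the four inequalities yields the chain $\ell_l^k<\ell_l^{k+1}<\ell^*<\ell_r^{k+1}<\ell_r^k$; in particular $\ell_r^{k+1}>\ell_l^k$ needs no separate argument.

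The argument is entirely routine; the only points requiring care are bookkeeping ones: checking that $\ell_l^{k+1}$ genuinely lands strictly inside $[\ell_l^k,\ell_r^k]$ before invoking the chord-below-graph inequality, keeping straight that the left update is a secant step that brackets $\ell^*$ from below while the right update is a Newton step that brackets it from above, and making sure it is \emph{strict} concavity (via $g''<0$) that upgrades each inequality to the strict form asserted.
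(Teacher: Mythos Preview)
Your proof is correct and is precisely the standard argument the paper has in mind: the paper's own proof merely states that $\ell_l^k<\ell_l^{k+1}<\ell^*$ and $\ell^*<\ell_r^{k+1}<\ell_r^k$ are ``standard properties'' of the secant and Newton methods for strictly concave functions and leaves the details to the reader. You have supplied those details carefully, including the uniqueness of $\ell^*$ and the strict inequalities, so there is nothing to add.
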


\begin{proof} The inequalities $\ell^{k}_l<\ell_l^{k+1} < \ell^{*}$
are standard properties of the secant algorithm. The inequalities 
$\ell^{*} < \ell_r^{k+1}< \limiter_r^{k}$ are standard properties of Newton's algorithm.
The details are left to the reader
\end{proof}

\begin{algorithm}
\caption{Newton-Secant solver}
\label{PseudoNewtSec}
\begin{algorithmic}[1]
\Require $k = 0$, $k_{\max} \geq 1$, $\ell_{l} < \ell_{r}$, 
$g(\ell_{l}) > 0$,  $g(\ell_{r}) < 0$, $\textup{tol} > 0$
\While{$k \leq k_{\max}$ \textbf{and} $\ell_{r} - \ell_{l} > \textup{tol}$} \label{checkstop} 
  \State $k := k + 1$
  \State $\ell_l^{\textup{aux}} := \ell_{l}$
	\If{$g(\ell_{l}) > g(\ell_{r})$} \label{wellpoSec} \label{begSec}
		\State $s_l := \frac{g(\ell_{r}) - g(\ell_{l})}{\ell_{r} - \ell_{l}}$ \Comment{Condition $\ell_{r} - \ell_{l} > 0$ checked in line \ref{checkstop}}
		\State $\ell_{l} := \ell_{l} - \frac{g(\ell_{l})}{s_l}$
	\Else
		\State \textbf{break}
	\EndIf \label{EndSec}
	\If{$\ell_{l} > \ell_{r}$ \textbf{or} $g(\ell_{l}) < 0$ } \label{BegSan1} \Comment{Assumes $g(\ell_{r}) < 0$}
	\State $\ell_{l} := \ell^{l,\textup{aux}}$
	\State \textbf{break}
	\EndIf \label{EndSan1}
	\If{$g'(\ell_{r}) < $} \label{BegNewt}
		\State $\ell_{r} := \ell_{r} - \frac{g(\ell_{r})}{g'(\ell_{r})}$
	\Else
	  \State \textbf{break}
	\EndIf \label{EndNewt}
	\If{$g(\ell_{r}) > 0$} \label{BegSan2} 
	\Comment{Condition $\ell_{r} - \ell_{l} > 0$ will be checked in line \ref{checkstop}}
		\State \textbf{break}
	\EndIf \label{EndSan2}
\EndWhile
\State \Return $\ell_i^j := \ell_{l}$ \label{finalLine}
\end{algorithmic}
\end{algorithm}

In Algorithm~\ref{PseudoNewtSec}, line~\ref{checkstop} checks the
stopping criteria. The ``break'' statements (or ``exit'' statements,
depending on the programming language) force the code out of the
while loop, redirecting the control to Line \ref{finalLine}.
One may reach break statements due to roundoff errors.  Lines
\ref{begSec}--\ref{EndSec} is the secant update (approximation from
the left), while Lines \ref{BegNewt}--\ref{EndNewt} define the Newton
update (approximation from the right). Lines
\ref{BegSan1}-\ref{EndSan1} and \ref{BegSan2}--\ref{EndSan2} are
sanity checks. The  Newton-secant update preserves the order
$\ell^{k}_l<\ell_l^{k+1} < \ell^{*} < \ell_r^{k+1}< \limiter_r^{k}$
(see lemma \ref{NSorder}), however some crossover may occur after some
iterations because of round-off errors (due to the nature of floating-point
arithmetic). Notice that the output of interest is the one
produced by the secant update (see line \ref{finalLine}), since the
output produced by Newton's method violates the inequality that we
want to satisfy.


\begin{remark}[Deficiencies of Newton's method]\label{Rem:NewProb} 
If we assume that $g(\ell)$ is strictly concave over $[0,1]$, which
is the case of interest here, one can construct counterexamples
illustrating that Newton's method can either not converge or produce
an output that violates the bound that we want to enforce. For
instance, if the initial guess $\ell^0 \in [0,1]$ for Newton's
method is such that $\ell^0 > \ell^*$ (\ie $g(\ell^0) < 0$), then
Newton's method produces a sequence $\{\ell^k\}_{k \in \polN}$
satisfying $\ell^* < \ell^k$ for all $k\in \polN$. This implies that
$g(\ell^k) < 0$ for all $k\in \polN$, which is incompatible with the
constraint that we want to satisfy.  On the other hand, if $g$
reaches a maximum at $l_c\in (0,\limiter^*)$ and the initial guess
is such that $\ell^0 \in (0,\limiter_c)$, then the sequence
$\{\ell^k\}_{k \in \polN}$ wanders outside the internal
$[0,1]$. Assuming that $g(\ell)$ is well defined outside $[0,1]$,
the sequence $\{\ell^k\}_{k \in \polN}$ may converge to a negative
solution.
\end{remark}

\begin{remark}[Actual performance] 
The convergence rate of Algorithm~\ref{PseudoNewtSec} is at least
$1.618$ because it combines the second-order Newton method with the
$\frac{\sqrt{5}+1}{2}$-order secant method.  In practice, we have
verified that Algorithm~\ref{PseudoNewtSec} rarely ever requires more
than three iterations to reach tolerances such as
$\textup{tol} = 10^{-10}$ (see \cite{GuerNazPopTom2017}). Most
frequently one exits the loop after reaching machine accuracy error.
\end{remark}

\subsection{Relaxing the bounds} \label{Sec:relaxing_the_bounds} In
general the quantity $\Psi_i^{\min}$ defined in
\eqref{def:convexBounds} is accurate enough to make the limited
high-order solution second-order in the $L^1$-norm in space. But it is
too tight to make the method higher-order or even second-order in the
$L^\infty$-norm in the presence of smooth extrema.  The situation is
even worse when using the specific physical entropy to limit the
high-order solution.  For instance, it is observed in
\cite[\S3.3]{Khobalatte_Perthame_1994} that strictly enforcing the
minimum principle on the specific (physical) entropy for the
compressible Euler equations degrades the converge rate to
first-order; it is said therein that ``It seems impossible to perform
second-order reconstruction satisfying the conservativity requirements
$\ldots$ and the maximum principle on $\varepsilon(\bu)$''. We confirm this observation. 
To recover full accuracy in  the
$L^\infty$-norm for smooth solutions, 
one must relax the bound $\Psi_i^{\min}$. 

To avoid repeating ourselves, we refer the reader to \cite[\S4.7]{GuerNazPopTom2017} where
we explain how the bound $\Psi_i^{\min}$ should be relaxed. In a
nutshell, one proceeds as follows: For each $i\in\calV$, we set
\[
\Delta^2 \Psi_i^{\min} = \frac{1}{\sum_{j\in\calI(i){\setminus}\{i\}} \beta_{ij}}\sum_{j\in\calI(i){\setminus}\{i\}}
\beta_{ij} (\Psi_j^{\min} - \Psi_i^{\min}),
\]
where the coefficients $\beta_{ij}$ are meant to make the computation
linearity-preserving (see Remark~\ref{Rem:linearity_preserving}). Then 
we compute the average 
\[
\overline{\Delta^2 \Psi_i^{\min}} := \frac{1}{2\text{card}(\calI(i))}
\sum_{i\ne j\in \calI(i)} (\frac12\Delta^2 \Psi_i^{\min} +\frac12\Delta^2\Psi_j^{\min}),
\]
and finally relax $\Psi^{\min}_i$ by setting 
\[
 \overline{\Psi^{\min}_i} 
= \max((1-\text{sign}(\Psi^{\min}_i)r_i) \Psi^{\min}_i, \Psi^{\min}_i - |\overline{\Delta^2 \Psi_i^n}|), 
\]
where $r_i = (\frac{m_i}{|\Dom|})^{\frac{1.5}{d}}$. Notice that
$r_i\in (0,1)$. The somewhat ad hoc threshold
$(1-\text{sign}(\Psi^{\min}_i)r_h)$ is never active when the mesh size
is fine enough. This term is just meant to be a safeguard on coarse
meshes. For instance, for the compressible Euler equations, when
$\Psi(\bsfU)$ is either the density (or the internal energy), this
threshold guarantees positivity of the density (or the internal
energy) because in this case $(1-\text{sign}(\Psi^{\min}_i)r_i\ge 0$.
The exponent $1.5$ is somewhat ad hoc; in principle one could take
$r_i = (\frac{m_i}{|\Dom|})^{\frac{\delta}{d}}$ with $\delta<2$.

\bibliographystyle{abbrvnat}

\bibliography{ref}

\end{document}